\def\r{\rightarrow}
\newcommand{\E}{\mathbb{E}}     
\renewcommand{\P}{\mathbb{P}}     
\renewcommand{\L}{\mathbb{L}}
\newcommand{\RR}{\mathbb{R}}
\newcommand{\EE}{\mathbb{E}}
\newcommand{\N}{\mathbb{N}}     
\newcommand{\R}{\mathbb{R}}     
\newcommand{\C}{\mathbb{C}}
\newcommand{\X}{\mathbb{X}}
\renewcommand{\ker}{\mathop{\rm Ker}}
\renewcommand{\r}{\mathop{\rightarrow}}
\newcommand{\cB}{\mathcal B}
\newcommand{\cC}{\mathcal C}
\newcommand{\cD}{\mathcal D}
\newcommand{\cL}{\mathcal L}
\newcommand{\cX}{\mathcal X}
\newtheorem{atheo}{Theorem}[section]
\newtheorem{adefi}[atheo]{Definition}
\newtheorem{alem}[atheo]{Lemma}
\newtheorem{arem}[atheo]{Remark}
\newtheorem{acor}[atheo]{Corollary}
\newtheorem{apro}[atheo]{Proposition}
\newtheorem{acond}[atheo]{Hypothesis}
\DeclareMathOperator{\Leb}{Leb}
\begin{document}

\title[Multiplicative ergodicity of Laplace transforms]{Multiplicative ergodicity of Laplace transforms for additive functional of Markov chains}
%\title{Laplace transform and multiplicative ergodicity for additive functional of Markov chains}
\author{Lo\"{\i}c Herv\'e}
\address{INSA de Rennes, F-35708, France; IRMAR CNRS-UMR 6625, F-35000, France; Universit\'e Europ\'eenne de Bretagne, France.}
\email{ Loic.Herve@insa-rennes.fr}
\author{Fran\c{c}oise P\`ene}
\address{Universit\'e de Brest and Institut Universitaire de France,
UMR CNRS 6205, Laboratoire de Math\'ematique de Bretagne Atlantique,
6 avenue Le Gorgeu, 29238 Brest cedex, France.}
\email{francoise.pene@univ-brest.fr}
\keywords{Markov processes, quasicompacity, operator, perturbation, ergodicity, Laplace transform}
\subjclass[2010]{Primary: 60J05}

\maketitle
\bibliographystyle{plain}
\vspace*{-8mm}
\begin{abstract}
We study properties of the Laplace transforms of non-negative additive functionals of Markov chains. We are namely interested in a multiplicative ergodicity property used in \cite{LouhichiYcart15} to study bifurcating processes with ancestral dependence. We develop a general approach based on the use of the operator perturbation method. We apply our general results to two examples of Markov chains, including 
a linear autoregressive model. In these two examples the operator-type assumptions reduce to some expected finite moment conditions on the functional (no exponential moment conditions are assumed in this work).
\end{abstract}
\date{\today}
\maketitle
\bibliographystyle{plain}
\tableofcontents

%========================
%========================

%\newpage

%\tableofcontents

%\newpage
\section{Introduction}
In this work we study the Laplace transforms of non-negative additive
functionals of Markov chains by the use of the method of perturbation of operators. This method, introduced by Nagaev \cite{nag1,nag2} and by Le Page and Guivarc'h \cite{lep82,GuivarchHardy} to prove a wide class of limit theorems (central limit
theorem, local limit theorem, large and moderate deviations principles), has known an impressive development in the past decades (e.g.~see \cite{broi,hulo} and the references therein).
With the use of the classical operator perturbation method,
Laplace transforms of additive functionals of Markov chains
have been studied in many works. Let us mention namely \cite{KM03,KM05}.  These works, motivated by large deviations estimates,
require some exponential moment assumptions and the continuity of the family of
operators acting on the reference Banach space. \\
In the present work, we weaken these assumptions. Since we consider here non-negative observables, we do not require any exponential moment assumption.  
But the price to pay is that, in general, the classical perturbation method does not apply in our context to the family of Laplace operators (see Remark~\ref{rem-method} for details). Here we have to consider several Banach spaces instead of a single one. This is allowed by the Keller and Liverani perturbation theorem \cite{KelLiv99,Bal00}(e.g.~see \cite{HerPen10} and the references therein). The fact that we work with several spaces (due to our weak moment assumptions) complicates our study compared to the classical
approach.
%======We illustrate our general technical results by examples for which since our observables is not assumed to satisfy any exponential moment (see Remark~\ref{rem-method} for details).}====
%We illustrate our general technical results by examples for which the family of operators is not continuous on the reference Banach space (as explained in below after Theorems \ref{thmAR} and\ref{pro-Knudsen}). The price to pay is to consider several Banach spaces instead of a single one. This is allowed by the Keller and Liverani perturbation theorem \cite{KelLiv99,Bal00}(e.g.~see \cite{HerPen10} and the references therein). The fact that we work with several spaces (due to our weak moment assumptions) complicates our study compared to the classical approach.}

Actually we study different properties of the Laplace transforms of non-negative additive functionals of Markov chains, namely their multiplicative ergodicity and the continuity and derivability of the radius of convergence of the Laplace-generating function, together with their spectral counterparts. We emphasize also on some applications of our result in the study of the bifurcating processes developed in [20]. 
%The present work provides namely examples ...Our study is motivated by the application of our results to the study of bifurcating processes developed in \cite{LouhichiYcart15}. 
The present work provides examples
coming from a markovian context satisfying some assumptions
of \cite{LouhichiYcart15}. We investigate in particular 
a multiplicative ergodicity property and its spectral analogous.

This paper is organized as follows. In Section \ref{results},
we introduce the notion of multiplicative ergodicity we are interested in, our notations and
we state our main results. We namely state general conditions
ensuring the multiplicative ergodicity of an additive functional of a
geometrically ergodic Markov chain.
We illustrate
our general result by two examples of Markov chains: the Knudsen gas model and some
linear autoregressive models. The proofs
for these examples are given in Sections
\ref{Knudsengas} and \ref{proofAR}. 
The more technical proofs of 
our general results are postponed in Appendix \ref{proofoperator} together with some other facts.
\section{Notations and main results}\label{results}
\subsection{Multiplicative ergodicity, examples}
Given a sequence $Y=(Y_n)_{n\ge 0}$ of non-negative valued random variables, we consider 
the generating function of the 
Laplace transforms of the partial sums of $Y$, that will be named
\emph{Laplace-generating function} of $Y$. 
We assume that the random variables $Y_n$ are not identically zero.
\begin{adefi}
\label{def:lapgen}
The Laplace transforms of the partial sums of
$Y$ are denoted by $L_Y^{(n)}$:
\begin{equation}
\label{eq:deflap}
\forall\gamma\in\mathbb R_+,\quad L_Y^{(n)}(\gamma) := \EE\left[\exp\left(-\gamma S_n\right)\right]\;,
\end{equation}
with $S_n:=\sum_{k=0}^nY_k$.
The Laplace-generating function of $Y$, denoted by $g_Y$, 
is the generating function of
the $L_Y^{(n)}(\gamma)$'s. For $\gamma,\lambda\in\RR^+$,
\begin{equation}
\label{eq:deflapgen}
g_Y(\gamma,\lambda) = 
\sum_{n=0}^{+\infty}\lambda^n L_Y^{(n)}(\gamma)\;.
\end{equation}
\end{adefi}
Observe that for all $\lambda\in[0,1)$, $g_Y(\cdot,\lambda)$ is 
non-increasing on $[0,+\infty)$, decreasing on 
$\{\gamma\ge 0\, :\, g_Y(\gamma,\lambda)<\infty\}$, 
starting at $g_Y(0,\lambda)=
1/(1-\lambda)$. Hence the radius of
convergence $R_Y(\gamma)$ of $g_Y(\gamma,\cdot)$ is 
non-decreasing in $\gamma$ from $R_Y(0)=1$. 
We are namely interested in the following properties:
\begin{equation}\label{P1}
\nu:=\inf\{\gamma>0\ :\ g_Y(\gamma,2)<\infty\}<\infty
\end{equation}
and
\begin{equation}\label{P2}
C_\nu:=\lim_{\gamma\rightarrow \nu+}\frac{\gamma-\nu}{\gamma}
      g_Y(\gamma,2)<\infty.
\end{equation}
In \cite{LouhichiYcart15}, it has been shown that these two properties imply the convergence in average of $e^{-\nu t}\E[N_t]$ where $N_t$ is the number of
cells at time $t$ in a mitosis process such that the life duration of 
the successive individuals of a same line has the distribution of $(Y_k)_k$.
To prove \eqref{P1} and \eqref{P2}, we will use the following notion of multiplicative ergodicity (see \cite{LouhichiYcart15}). Let 
us precise that the terminology "multiplicative ergodicity" is used
in the litterature with different levels of sharpness.
\begin{adefi} \label{def-mult-erg}
Let $\gamma_1>0$.
We say that $(S_n)_n$ is {\bf multiplicatively ergodic} on $J=[0,\gamma_1)$ if 
there exist two continuous maps $A$ and $\rho$ from $J$ to $(0,+\infty)$ such that,
for every compact subset $K$ of $(0,\gamma_1)$, there exist $M_K>0$ and $\theta_K\in(0,1)$ 
such that, for every $n\ge 1$, we have
$$ \sup_{\gamma\in K}|L_Y^{(n)}(\gamma)-A(\gamma) (\rho(\gamma))^n|\le M_K (\rho(\gamma)\theta_K)^n.$$
\end{adefi}
Observe that if $(S_n)_n$ is multiplicatively ergodic on $J=[0,\gamma_1)$,
then $\rho\equiv 1/R_Y$ and
$$\forall \gamma\in J,\ \forall \lambda>0,\quad 
 g_Y(\gamma,\lambda)<\infty\quad\Leftrightarrow\quad\lambda<\frac 1{\rho(\gamma)},$$
and, for every compact subset $K$ of $J$, we have
$$\forall \gamma\in K,\ \forall\lambda\in\left(0,\frac 1{\rho(\gamma)}\right),\quad
 \left\|g_Y(\gamma,\lambda)-\frac{A(\gamma)}{1-\lambda \rho(\gamma)}\right\|\le\frac{M_K}{1-\lambda \rho(\gamma)\theta_K}.$$
\begin{arem}\label{multergodP1P2}
If $(S_n)_n$  is multiplicatively ergodic,
then $\nu<\gamma_1$ means that
\begin{equation}\label{P1ter}
\nu=\inf\{\gamma\in J\ :\ \rho(\gamma)<1/2\}<\gamma_1.
\end{equation}
If moreover $\rho$ is differentiable at $\nu$ with $\rho'(\nu)\ne 0$, then
\eqref{P2} will follow with $C_\nu=-\frac {A(\nu)}{2\nu \rho'(\nu)}$. 
Actually, to obtain \eqref{P1ter}, we can relax the
continuity assumptions on $A$ and $\rho$ on $J=[0,\gamma_1)$. For \eqref{P2}, we just need the continuity of $A$ and the differentiability of $\rho$ at $\nu$.
\end{arem}
We focus our study on the three following properties: 
\vspace*{-2mm}
\begin{itemize}
\item the geometric ergodicity on some maximal interval $[0,\gamma_1)$,
\item\eqref{P1} and more generally the study of $\lim_{\gamma\rightarrow\gamma_1}\rho(\gamma)$,
\item \eqref{P2} and more generally the differentiability of $\rho$ on $(0,\gamma_1)$ and
the fact that $\rho'<0$ on this interval.
\end{itemize}
We investigate these properties in the context of additional functional of Markov chains. Let $(\X,\cX)$ be a measurable space, let $(X_n)_n$ be a Markov chain on $(\X,\cX)$ 
with Markov kernel $P(x,dy)$ and invariant probability $\pi$, and let $\xi : \X\r [0,+\infty)$ be a measurable function. Recall that $\xi$ is said to be coercive if $\lim_{|x|\rightarrow +\infty}\xi(x)=+\infty$, i.e.~if, for every $\beta$,  $[\xi\le\beta]$ is bounded. 
Moreover we consider
$$S_n:=\sum_{k=0}^n\xi(X_k).$$ 
We identify $X$ with the canonical Markov chain and write $\P_\mu$
for the probability measure corresponding to the case when
the initial probability distribution (i.e. the distribution of $X_0$) is $\mu$.
For every $x\in\X$, we simply write $\P_x$ when $\mu=\delta_x$. We write $\E_\mu[\cdot]$ and $\mathbb E_x[\cdot]$ for the corresponding expectations.
We then write 
$$\rho_{Y,\mu}(\gamma):=\limsup_{n\rightarrow +\infty}(\mathbb E_\mu[e^{-\lambda S_n}])^{\frac 1n} \quad \text{and } \quad \rho_{Y,x}(\gamma):=\limsup_{n\rightarrow +\infty}(\mathbb E_x[e^{-\lambda S_n}])^{\frac 1n}.$$
Moreover we simply write $\rho_Y$ for $\rho_{Y,\mu}$ in the case when $\rho_{Y,\mu}$ does not depend on the initial distribution. 
In this context we develop a general method to prove the
multiple ergodicity and even a spectral version of this property.
As a consequence, we prove the following result.
\begin{atheo}[Linear autoregressive model]\label{thmAR}
Assume that $\X:=\R$ and $(X_n)_{n\in\N}$ is the linear autoregressive model defined by $X_n = \alpha X_{n-1} + \vartheta_n$ for $n\ge 1$,
where $X_0$ is a real-valued random variable, $\alpha\in(-1,1)$, and $(\vartheta_n)_{n\ge 1}$ is a sequence of  i.i.d. real-valued random variables, admitting a moment of order $r_0>0$, independent of $X_0$.  Assume that $\vartheta_1$ has a continuous Lebesgue probability density function $p>0$ on $\X$ such that
$$\forall x\in \mathbb R,\ \ \exists\varepsilon_x>0,\ \ \int_{\mathbb R} \sup_{|z|<\varepsilon_x}
     p(y+x+z)\, dy<\infty.$$
Assume moreover that $\xi$ is continuous and coercive, that $\xi(x)>0$ for Lebesgue almost every $x\in\mathbb R$ and that $\sup_{x\in\mathbb R}\frac{\xi(x)}{(1+|x|)^{r_0}}<\infty$.

Then, $\rho_{Y,x}(\gamma)=\rho_{Y,\pi}(\gamma)$, $(S_n)_n$ is multiplicatively ergodic on $(0,+\infty)$ 
with respect
to $\P_\pi$ and to $\P_x$ for any $x\in\X$. 
Furthermore $\lim_{\gamma \rightarrow+\infty} \rho_{Y,\pi}(\gamma)=0$. Hence \eqref{P1} holds true under $\mathbb P_\pi$
or $\mathbb P_x$ for any $x\in\X$.

If moreover there exists $\tau>0$ such that  $\sup_{x\in\mathbb R}\frac{\xi(x)^{1+\tau}}
{(1+|x|)^{r_0}}<\infty$, then 
$\rho_{Y,\pi}$ is differentiable and admits a negative derivative on $(0,+\infty)$ and so \eqref{P2} holds also true  under $\mathbb P_\pi$
or $\mathbb P_x$  for any $x\in\X$.
\end{atheo}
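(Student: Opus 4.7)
The plan is to use the operator-perturbation framework described in Section~\ref{results}. Introduce, for $\gamma\ge 0$, the kernel operator
$$(P_\gamma f)(x) = \int_{\mathbb R} e^{-\gamma\xi(y)}\, f(y)\, P(x,dy),$$
so that, when $X_0\sim\mu$, $L_Y^{(n)}(\gamma) = \mu\bigl(e^{-\gamma\xi}\,P_\gamma^{n}\mathbf 1\bigr)$. Multiplicative ergodicity on an interval $(0,\gamma_1)$ reduces to showing that $P_\gamma$ is quasi-compact on a suitable Banach space with a simple dominant eigenvalue $\rho(\gamma)<1$ and one-dimensional spectral projection $\Pi_\gamma$, and that $\gamma\mapsto(\rho(\gamma),\Pi_\gamma)$ is continuous on $(0,\gamma_1)$ with spectral gap uniform on compact subsets. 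The formula $L_Y^{(n)}(\gamma)=A(\gamma)\rho(\gamma)^{n}+O(\rho(\gamma)^n\theta^n)$ is then obtained by reading the spectral decomposition of $P_\gamma^{n}$.

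For $\gamma=0$, $P_0=P$ is the Markov kernel of the AR(1) chain. Its geometric ergodicity on the weighted space $\cB_V=\{f : \|f\|_V:=\sup_x|f(x)|/V(x)<\infty\}$ with $V(x)=1+|x|^{r_0}$ is standard: $|\alpha|<1$ together with $\mathbb E[|\vartheta_1|^{r_0}]<\infty$ yields a Foster--Lyapunov drift $PV\le \kappa V+K$ with $\kappa<1$, while the continuous positive density $p$ and the integrability of $\sup_{|z|<\varepsilon_x} p(y+x+z)$ provide a Doeblin-type minorization making $P$ quasi-compact with simple dominant eigenvalue $1$. Thanks to the growth $\xi(x)\le C(1+|x|)^{r_0}$, the family $\gamma\mapsto P_\gamma$ is continuous in the operator norm of $\cB_V$ at $\gamma=0$, and classical Kato perturbation produces $\rho(\gamma),\Pi_\gamma$ locally. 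To reach the whole half-line one cannot stay in $\cB_V$, since $e^{-\gamma\xi}$ spoils the drift once $\gamma$ is large: this is the point where the Keller--Liverani theorem for two spaces enters.

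I would introduce a weaker space $\cB_v\supset\cB_V$ with $v(x)=(1+|x|)^{s r_0}$ for some $s\in(0,1)$ and verify, for each $\gamma>0$, the three Keller--Liverani hypotheses: a drift inequality $P_\gamma^{n}V\le c_\gamma\kappa_\gamma^{n}V+C_\gamma v$ with $\kappa_\gamma<1$ locally uniform in $\gamma$; compactness of the embedding $\cB_V\hookrightarrow\cB_v$, coming from coercivity of $\xi$ and the regularity of $P$; and the continuity bound $\|(P_{\gamma'}-P_\gamma)f\|_v\le\omega(|\gamma'-\gamma|)\|f\|_V$, which follows from $|e^{-\gamma'\xi}-e^{-\gamma\xi}|\le|\gamma'-\gamma|\,\xi\,e^{-\min(\gamma,\gamma')\xi}$ and the polynomial control on $\xi$. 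Keller--Liverani then delivers continuity of the dominant eigenvalue and projector on all of $(0,+\infty)$, yielding multiplicative ergodicity under $\mathbb P_\pi$ and under $\mathbb P_x$ for each $x$ (both $\pi$ and $\delta_x$ act as bounded forms on $\cB_V$) and the identity $\rho_{Y,x}=\rho_{Y,\pi}$. The limit $\rho(\gamma)\to 0$ as $\gamma\to+\infty$ follows from $\xi$ being coercive and positive a.e., which by dominated convergence forces $\int e^{-\gamma\xi}\,d\pi\to 0$; comparing $\rho(\gamma)^{n}$ with $\pi(P_\gamma^{n}\mathbf 1)$ via the spectral decomposition then yields \eqref{P1}.

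For differentiability, the refinement $\xi(x)^{1+\tau}\le C(1+|x|)^{r_0}$ makes $\gamma\mapsto P_\gamma$ differentiable in the operator norm from $\cB_V$ to $\cB_v$, since $\partial_\gamma P_\gamma f = -P_\gamma(\xi f)$ loses only a fraction of the weight $V$. Classical perturbation yields the Hadamard formula
$$\rho'(\gamma) = -\langle\ell_\gamma,\xi h_\gamma\rangle,$$
with $h_\gamma>0$, $\ell_\gamma$ a positive measure, normalized by $\langle\ell_\gamma,h_\gamma\rangle=1$. Since positivity of $p$ forces $\ell_\gamma$ to charge every open set and $\xi>0$ Lebesgue-a.e., this integral is strictly positive, so $\rho'(\gamma)<0$ on $(0,+\infty)$, and Remark~\ref{multergodP1P2} delivers \eqref{P2}. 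The main obstacle is the uniform verification of the Keller--Liverani hypotheses for all $\gamma\in(0,+\infty)$ with a single pair of spaces $(\cB_V,\cB_v)$: the polynomial hypotheses on $\xi$, and their strengthening with $\tau$, are precisely calibrated so that drift, compactness and continuity coexist without collapse of the spectral gap as $\gamma$ grows.
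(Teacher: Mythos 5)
The overall plan --- perturbation of $P_\gamma=P(e^{-\gamma\xi}\cdot)$ on weighted-supremum spaces via Keller--Liverani, then a Hadamard formula for $\rho'$ --- is the right one. But the machinery you describe is pointed the wrong way, and the steps you call ``classical'' are exactly the ones that fail in this regime. First, with no exponential moment on $\xi$ the map $\gamma\mapsto P_\gamma$ is \emph{not} continuous into $\cL(\cC_V)$ at \emph{any} point, including $\gamma=0$: taking $f=V$ and using coercivity of $\xi$ shows $\|(P_\gamma-P_0)V\|_V$ stays bounded away from $0$ as $\gamma\downarrow 0$. Hence ``classical Kato perturbation'' produces nothing locally; and the reason for leaving a single space is this failure of norm-continuity, not the drift breaking for large $\gamma$ --- Lemma~\ref{lem-D-F-P-gamma} shows the drift coefficient $e^{-\gamma\beta}\delta$ actually \emph{improves} as $\gamma$ grows. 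Second, your two-space continuity bound $\|(P_{\gamma'}-P_\gamma)f\|_v\le\omega(|\gamma'-\gamma|)\|f\|_V$ with $v=V^s$, $s\in(0,1)$, has the inclusion reversed ($\cB_v\subset\cB_V$, not $\supset$) and controls a stronger norm by a weaker one, which is not provable: the Lipschitz bound $|e^{-\gamma'\xi}-e^{-\gamma\xi}|\le|\gamma'-\gamma|\,\xi\,e^{-(\gamma\wedge\gamma')\xi}$ together with $\xi\le CV$ inserts an extra $V$ and lands on $PV^2$, which would need a moment of order $2r_0$ that is not assumed. Lemma~\ref{lem-cont-P-gamma} instead uses $|e^{-u}-e^{-v}|\le|u-v|^b$ with $a+b\le 1$, so the exponent never exceeds one, and the continuity goes from the smaller space $\cC_{V^a}$ into the larger space $\cC_{V^{a+b}}$.

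Third, and most importantly, the direct Lasota--Yorke inequality $\|P_\gamma^nf\|_V\le c(\kappa^n\|f\|_V+M^n\|f\|_v)$ does \emph{not} hold on weighted-supremum spaces, nor is the embedding $\cC_{V^a}\hookrightarrow\cC_V$ compact (bounded oscillating sequences have no convergent subsequence); ``compactness of the embedding'' is not available and is not what the paper uses. The drift condition translates into a Doeblin--Fortet inequality only for the \emph{dual} operator on $\cC_V^*$ (see \eqref{DFgamma=0}, \eqref{DFgamma>0}), and the bound on the essential spectral radius comes from Hennion's theorem applied to $P_\gamma^*:\cC_V^*\to\cC_b^*$, whose compactness follows from that of $P_\gamma:\cC_b\to\cC_V$ (Lemma~\ref{C-V-0}, via Ascoli and the continuity of $p$ --- which is precisely why the paper works on the continuous-function spaces $\cC_{V^a}$ rather than $\cB_V$). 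The Keller--Liverani theorem therefore has to be applied in its dual form, Hypothesis~\ref{hypKL}*; without this duality step the perturbation machine cannot even be started. Finally, two spaces are not enough for the derivative: differentiating the resolvent in the Keller--Liverani setting loses weight twice, and the paper needs the four-space chain $\cC_{V^{a_0}}\hookrightarrow\cC_{V^{a_1}}\hookrightarrow\cC_{V^{a_2}}\hookrightarrow\cC_{V^{a_3}}$ with $a_1+\frac{1}{1+\tau}<a_2$ (see Theorem~\ref{generalspectraltheorem2} and Section~\ref{proofAR}) so that $f\mapsto\xi f$ maps $\cC_{V^{a_1}}$ into $\cC_{V^{a_2}}$; invoking ``classical perturbation'' for the Hadamard formula contradicts your own earlier (correct) observation that single-space Kato perturbation is unavailable here.
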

We also prove the following result for the simple example of Knudsen gas.
\begin{atheo}[Knudsen gas] \label{pro-Knudsen}
Let $\X:=\R^d$, $\pi$ be some Borel probability measure on $\X$. Given $\alpha\in(0,1)$ and a Markov kernel $U$ on $\R^d$ with stationary measure $\pi$, 
we consider the canonical 
Markov chain $X$ with transition kernel $P$ given by 
$P = \alpha \pi + (1-\alpha)\, U$.

Then $(S_n)_n$ is multiplicatively ergodic on the interval $J_0=\{\gamma>0\, :\, r(\gamma)>1-\alpha\}$ with respect to $\mathbb P_\mu$ for any probability distribution $\mu$ on $\X$
absolutely continuous with respect to $\pi$, with density in $\L^{p}(\pi)$
for some $p>1$. 

Assume $\alpha>1/2$ and $2\alpha \sum_{n\ge 0}(2(1-\alpha))^n \P_\pi\left(\sum_{k=0}^{n}Z_k=0\right)<1$, where $(Z_n)_n$ is a Markov process with transition $U$, then \eqref{P1}
holds with respect 
to $\P_\pi$ and 
to $\P_\mu$ for every probability distribution $\mu$ in $\X$
satisfying the previous conditions (this is true in particular if $\alpha>1/2$ and $\pi(\xi=0)=0$).

Assume moreover that $\pi(\xi^\tau)<\infty$ for some $\tau>1$. Then
\eqref{P2} holds also true  with respect to $\P_\pi$ and to 
$\P_\mu$ for every probability distribution $\mu$
admitting a density with respect to $\pi$ which belongs to 
$\L^{p}(\pi)$ for some $p>\frac{\tau}{\tau-1}$.
\end{atheo}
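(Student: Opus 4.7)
The key structural feature of the Knudsen kernel is that the Laplace operator
$$P_\gamma f(x):=\int e^{-\gamma\xi(y)}f(y)\,P(x,dy)$$
splits as $P_\gamma=V_\gamma+T_\gamma$, where $V_\gamma f=\alpha\,\pi(e^{-\gamma\xi}f)\,\mathbf 1$ is rank one and $T_\gamma:=(1-\alpha)U_\gamma$ with $U_\gamma f(x):=\int e^{-\gamma\xi(y)}f(y)U(x,dy)$. Expanding $(V_\gamma+T_\gamma)^n\mathbf 1$ over the positions of the $V_\gamma$-factors and exploiting the fact that every $V_\gamma$ collapses its argument onto $\R\mathbf 1$, a direct geometric resummation yields the closed-form identity
\begin{equation*}
g_Y(\gamma,\lambda)=\frac{\mu\bigl(e^{-\gamma\xi}(1-\lambda T_\gamma)^{-1}\mathbf 1\bigr)}{1-\lambda\alpha\, h(\gamma,\lambda)},\qquad h(\gamma,\lambda):=\pi\bigl(e^{-\gamma\xi}(1-\lambda T_\gamma)^{-1}\mathbf 1\bigr).
\end{equation*}
Everything else will be extracted from this explicit rational-like expression, bypassing any abstract perturbation theorem.

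To establish multiplicative ergodicity on $J_0$, I would work on $\L^p(\pi)$ so that $\mu$ defines a continuous linear form. The numerator and $h$ are jointly analytic on the disc $|\lambda|<1/((1-\alpha)r(\gamma))$, and $\lambda\mapsto\lambda h(\gamma,\lambda)$ is strictly increasing on $[0,+\infty)$, so $\alpha\lambda h(\gamma,\lambda)=1$ has at most one positive root $\lambda_\ast(\gamma)$; the condition $r(\gamma)>1-\alpha$ defining $J_0$ places $\lambda_\ast(\gamma)$ strictly inside the analyticity disc. Setting $\rho(\gamma):=1/\lambda_\ast(\gamma)$, computing the residue of $g_Y$ at the simple pole $\lambda=\lambda_\ast(\gamma)$, and dominating the remaining holomorphic tail on a slightly enlarged disc produce an expansion of the form $L_Y^{(n)}(\gamma)=A(\gamma)\rho(\gamma)^n+O((\rho(\gamma)\theta_K)^n)$; continuity of $A$ and $\rho$ on compact subsets of $J_0$ follows from dominated convergence and the implicit-function theorem applied to $\alpha\lambda h(\gamma,\lambda)=1$.

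For \eqref{P1} note that $\rho(\gamma)<1/2$ is equivalent to $2\alpha h(\gamma,2)<1$. Monotone convergence gives
\begin{equation*}
\lim_{\gamma\to+\infty}2\alpha h(\gamma,2)=2\alpha\sum_{n\ge 0}(2(1-\alpha))^n\,\P_\pi\Bigl(\textstyle\sum_{k=0}^n\xi(Z_k)=0\Bigr)<1
\end{equation*}
by hypothesis, while at $\gamma=0$ one computes $2\alpha h(0,2)=2\alpha/(2\alpha-1)>1$ when $\alpha>1/2$; continuous monotone decrease of $\gamma\mapsto 2\alpha h(\gamma,2)$ then produces a unique finite $\nu$ at which this quantity equals $1$. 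Passing from $\P_\pi$ to a general $\P_\mu$ only affects the numerator and is handled by H\"older's inequality. For \eqref{P2}, Remark~\ref{multergodP1P2} reduces the task to differentiability and strict negativity of $\rho$, which follow by implicit differentiation of $\alpha\lambda h(\gamma,\lambda)=1$ combined with $\partial_\gamma h(\gamma,\lambda)=-\pi(\xi e^{-\gamma\xi}(1-\lambda T_\gamma)^{-1}\mathbf 1)<0$; the moment assumption $\pi(\xi^\tau)<\infty$ together with $d\mu/d\pi\in\L^{\tau/(\tau-1)}(\pi)$ is exactly what is needed to differentiate under the integral via H\"older.

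The main obstacle is certifying that $\lambda_\ast(\gamma)$ genuinely lies strictly inside the analyticity disc of $(1-\lambda T_\gamma)^{-1}$ --- i.e., that the singularity of $g_Y$ produced by the rank-one perturbation truly dominates the spectrum of $(1-\alpha)U_\gamma$. This is by design the content of the condition $\gamma\in J_0$, but its compatibility with the choice of $\L^p$-space, and with the later requirement $\lambda=2$ used for \eqref{P1}, demands a careful verification; this is the one point at which the multi-space viewpoint of the paper enters.
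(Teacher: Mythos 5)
Your rank-one decomposition $P_\gamma=V_\gamma+T_\gamma$ with $V_\gamma f=\alpha\,\pi(e^{-\gamma\xi}f)\,\mathbf 1$, $T_\gamma=(1-\alpha)U_\gamma$, and the resulting closed-form expression for $g_Y$ are both correct: a Sherman--Morrison collapse of $(I-\lambda P_\gamma)^{-1}\mathbf 1$ indeed gives $(I-\lambda T_\gamma)^{-1}\mathbf 1/(1-\alpha\lambda h(\gamma,\lambda))$, and the resulting eigenvalue equation $\alpha\lambda_* h(\gamma,\lambda_*)=1$ is precisely identity \eqref{eqlambdav2} of the paper. What you are doing differently is replacing the paper's route --- verify the conditions of Lemma~\ref{prop1}, feed them into the Keller--Liverani machinery of Theorems~\ref{generalspectraltheorem1}--\ref{generalspectraltheorem2} with the spaces $\L^{a_i}(\pi)$, and deduce \eqref{P1}--\eqref{P2} from Corollaries~\ref{cor-th1}, \ref{cor-th1v1}, \ref{cor-th2} and~\ref{corknuds} --- by a direct singularity analysis of the generating function. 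This is a legitimate and more elementary alternative in this specific example, because quasi-compactness of $P_\gamma$ on $\L^a(\pi)$ (Lemma~\ref{prop1}(i)) is here an immediate consequence of the rank-one perturbation of the contraction $(1-\alpha)U_\gamma$, so the abstract multi-space argument is not indispensable. Your approach buys an explicit formula; the paper's buys uniformity over $\gamma$ in compacts and a statement that applies verbatim in the autoregressive example, where no such closed form is available.

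Three points in your argument are incomplete. First, for the residue extraction to produce $L^{(n)}_Y(\gamma)=A(\gamma)\rho(\gamma)^n+O((\theta_K\rho(\gamma))^n)$ you must rule out \emph{other} zeros of $1-\alpha\lambda h(\gamma,\lambda)$ of modulus $\le \lambda_*(\gamma)$ inside the analyticity disc; you only address positive real roots. Since $\lambda h(\gamma,\lambda)=\sum_{n\ge0}\lambda^{n+1}c_n$ with $c_n=(1-\alpha)^n\E_\pi[e^{-\gamma\sum_{k=0}^n\xi(Z_k)}]>0$, a Pringsheim-type strict-triangle-inequality argument (at least two $c_n>0$) shows that $|\alpha\lambda h(\gamma,\lambda)|<1$ on $|\lambda|\le\lambda_*$, $\lambda\neq\lambda_*$; this is what the paper encodes via items (v)--(vi) of Lemma~\ref{prop1} and Proposition~\ref{firstorder}. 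Second, you correctly flag that one must certify $\lambda_*(\gamma)<1/r(T_\gamma)$, but you attribute the difficulty to the choice of $\L^p$-space; in fact it follows simply from $r(T_\gamma)=(1-\alpha)r(U_\gamma)\le 1-\alpha<r(\gamma)$ for $\gamma\in J_0$ together with the rank-one perturbation identity showing $r(\gamma)$ solves $\alpha\lambda h(\gamma,\lambda)=1$ with $\lambda=1/r(\gamma)$ --- which is exactly the content of Lemma~\ref{rayonspectralKnudsen}. Third, your displayed formula for $\partial_\gamma h$ is missing the contributions from $\partial_\gamma T_\gamma=-(1-\alpha)U(\xi e^{-\gamma\xi}\,\cdot\,)$: the full derivative is $\partial_\gamma[(1-\alpha)^n\E_\pi(e^{-\gamma S^Z_n})]=-(1-\alpha)^n\E_\pi[S^Z_n e^{-\gamma S^Z_n}]$ with $S^Z_n:=\sum_{k=0}^n\xi(Z_k)$, not merely the $\xi(Z_0)$ term. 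The strict negativity conclusion survives, but the stated identity does not, and it is exactly for controlling the full sum that the moment hypothesis $\pi(\xi^\tau)<\infty$ together with the nested $\L^{a_i}(\pi)$-spaces (or, in your scheme, a direct H\"older bound on $\E_\pi[S^Z_ne^{-\gamma S^Z_n}]$) is needed.
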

\begin{arem} \label{rem-method}
Now let us say a few words about our general approach. 
We will consider the family of perturbed operators $(P_\gamma:=P(e^{-\gamma\xi}\cdot))_{\gamma>0}$ acting on some Banach spaces of measurable functions (or of classes of measurable functions). 
For linear autoregressive models (Theorem \ref{thmAR}), we will work with Banach spaces
$\mathcal B_a=\mathcal C_{V^a}$ linked to the weighted-supremum Banach spaces.
For the Knudsen gas (Theorem \ref{pro-Knudsen}), we will work with $\mathcal B_a=\mathbb L^{a}(\pi)$. 
Because we do not assume any exponential moment condition on $\xi$ (contrarily to the papers mentioned in Introduction), the map $\gamma\mapsto P_\gamma$ is not continuous from $(0,+\infty)$ to $\mathcal L(\mathcal B_{a})$, but only
from $(0,+\infty)$  to $\mathcal L(\mathcal B_a,\mathcal B_b)$ for $a<b$ for the linear autoregressive models (and for $b<a$ for the Knudsen gas).
For this reason, the classical operator perturbation method  \cite{nag1,GuivarchHardy}
(see also \cite{hulo} and the references therein) does not apply to our context.
But its improvement given by the Keller-Liverani
perturbation theorem \cite{KelLiv99} will be appropriate to our purposes.
\end{arem}

\subsection{Notations}\label{nota}
For any normed complex vector spaces $(\cB_0,\|\cdot\|_{\cB_0})$ and $(\cB_1,\|\cdot\|_{\cB_1})$, the set of continuous $\mathbb C$-linear operators from $\cB_0$ to $\cB_1$
will be written $\mathcal L(\cB_0,\cB_1)$. This set is endowed with the operator norm $\|\cdot\|_{\cB_0,\cB_1}$ given by
$$\forall Q\in\mathcal L(\cB_0,\cB_1),\ \  \|Q\|_{{\cB_0},{\cB_1}}=\sup_{f\in\cB_0,\ \|f\|_{{\cB_0}}=1}\|Qf\|_{{\cB_1}}.$$
The notation $\mathcal B_0\hookrightarrow \mathcal B_1$ means that $\cB_0$ is continuously injected in $\mathcal B_1$. 

If $\cB$ is a complex Banach space, we will simply write $(\cB^*,\|\cdot\|_{\cB^*})$ 
for the topological dual space 
$(\mathcal L(\cB,\mathbb C),\|\cdot\|_{\cB,\mathbb C})$
of $\cB$ and $(\mathcal L(\cB),\|\cdot\|_{\cB})$ for $(\mathcal L(\cB,\cB),\|\cdot\|_{\cB,\cB})$.
For any $Q\in\cL(\cB)$, we denote by $Q^*$ its adjoint operator. 
We write $\sigma(Q)=\sigma(Q_{|\cB})$ for the spectrum of $Q$:
$$\sigma(Q) :=\{\lambda\in\mathbb C\ :\ (Q-\lambda\, I)\mbox{ is non invertible}\},$$
where $I$ denotes the identity operator on $\cB$. Recall that $Q$ and $Q^*$ have the same norm in $\cL(\cB)$ and $\cL(\cB^*)$ respectively, as well as the same spectrum. We write
$r(Q)=r(Q_{|\cB})$ for the spectral radius of $Q$:
$$r(Q_{|\cB}):=\sup\{|\lambda|,\ \lambda\in\sigma(Q)\}=\lim_n\|Q^n\|_{\cB}^{1/n} $$
and $r_{ess}(Q)=r_{ess}(Q_{|\cB})$ for its essential spectral radius:
$$r_{ess}(Q):=\lim_n \inf_{F\in\cL(\cB)\ \mbox{\scriptsize compact}}\|Q^n-F\|^{1/n}_{\cB}.$$
Recall that we also have
$$r_{ess}(Q):=\sup\{|\lambda|\ :\ \lambda\in\mathbb C\ \mbox{and} \ (Q-\lambda\,  I)\mbox{ is non Fredholm}\}.$$

Let $(\X,\cX)$ be a measurable space, let $X=(X_n)_n$ be a Markov chain on $(\X,\cX)$ 
with Markov kernel $P(x,dy)$ and invariant probability $\pi$, and let $\xi : \X\r [0,+\infty)$ be a measurable function. 
We then consider
$$Y_k:=\xi(X_k)\quad\mbox{and}\quad S_n:=\sum_{k=0}^nY_k.$$
We identify $X$ with the canonical Markov chain.
We consider the nonnegative kernels $P_\gamma(x,dy)$ defined by
\begin{equation}
\forall \gamma\in[0,+\infty),\quad P_\gamma(x,dy) := e^{-\gamma\xi(y)}\, P(x,dy)\ \ \mbox{and}\ \ 
       P_{\infty}(x,dy):=\mathbf 1_{\{\xi=0\}}(y)\, P(x,dy).
\end{equation}
We use the same notations $P_\gamma$ for the linear operators associated
with these kernels:
$$\forall x\in\X,\quad (P_\gamma f)(x) :=\int_{\X}f(y)\, P_\gamma(x,dy) .$$

In the sequel $P_\gamma$ will be assumed to continuously act on a (or several) Banach space $\cB$. Such a space will contain $\mathbf 1_\X$ and $\pi$ will be in its topological dual space. Moreover we write $r(\gamma) := r(P_{\gamma|\cB})$ for the spectral radius of $P_\gamma$. With these notations, we have
\begin{equation}\label{formuleFourier}
\mathbb E_{\mu}[e^{-\gamma S_n}]=\mu(e^{-\gamma\xi}
P_\gamma^n \mathbf 1_{\X})\quad\mbox{and}\quad g_Y(\gamma,\lambda)=\mu\big(e^{-\gamma\xi}(I-\lambda P_\gamma)^{-1}\mathbf{1}_\X\big),
\end{equation}
for any $\lambda<\frac 1{r(\gamma)}$ and 
for any initial distribution $\mu$ on $\X$ such that $f\mapsto\mu(e^{-\gamma \xi}f)$ belongs to $\cB^*$. 

Let us now recall the definition of Banach lattice spaces of functions (or classes of functions modulo $\pi$), which will be used in our examples to obtain the expected nonincreasingness of $r(\cdot)$ and some suitable spectral properties for $P_\gamma$.
\begin{adefi}
A complex Banach space $(\cB,\|\cdot\|_{\cB})$ of functions $f:\X\rightarrow\mathbb C$
(or of classes of such functions modulo $\pi$) is said to be a {\bf complex Banach lattice} if it is stable by $|\cdot|$, by real part and if
$$\forall f,g\in\cB,\quad f(\X)\cup g(\X)\subset\R\quad
      \Rightarrow\quad \min(f,g),\, \max(f,g)\in\cB , $$
$$\forall f,g\in\cB,\quad |f|\le|g|\quad\Rightarrow\quad \|\, |f|\, \|_\cB = \|f\|_\cB\le \|g\|_\cB = \|\, |g|\, \|_\cB.$$
\end{adefi} 
 Classical instances of Banach lattices of functions are the spaces $(\L^p(\pi),\|\cdot\|_p)$ and $(\cB_V,\|\cdot\|_{V})$ (see (\ref{def-La}) and (\ref{def-BV})), as well as the space $(\mathcal L^\infty(\mathbb X),\|\cdot\|_{\infty})$ composed of all the bounded measurable $\C$-valued functions on $\X$, and equipped with its usual norm $\|f\|_{\infty}:=\sup_{x\in\X}|f(x)|.$
\subsection{General results}
We first prove that the monotonicity of 
$\gamma\mapsto r(\gamma) := r(P_{\gamma|\cB})$ is easy to establish when $\cB$ is a Banach lattice of functions.
\begin{alem}\label{LEMME0}
If $(\cB,\|\cdot\|_{\cB})$ is a complex Banach lattice of functions $f:\X\rightarrow\mathbb C$ (or of classes of functions modulo $\pi$), then the map $\gamma\mapsto r(\gamma)$ is non increasing on $[0,+\infty)$. 
\end{alem}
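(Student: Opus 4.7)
The plan is to exploit positivity of the kernels $P_\gamma$ together with the lattice axioms to dominate $\|P_{\gamma_2}^n f\|_{\mathcal{B}}$ by $\|P_{\gamma_1}^n |f|\|_{\mathcal{B}}$ whenever $\gamma_1 \le \gamma_2$, and then take the spectral radius formula $r(\gamma) = \lim_n \|P_\gamma^n\|_{\mathcal{B}}^{1/n}$.

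First I would observe the pointwise monotonicity in $\gamma$: for any $\gamma_1 \le \gamma_2$ and any measurable $g \ge 0$ one has $e^{-\gamma_2 \xi(y)} \le e^{-\gamma_1 \xi(y)}$, so
\[
0 \le P_{\gamma_2} g \;=\; \int e^{-\gamma_2 \xi(y)} g(y)\, P(x,dy) \;\le\; \int e^{-\gamma_1 \xi(y)} g(y)\, P(x,dy) \;=\; P_{\gamma_1} g.
\]
A straightforward induction, using that $P_{\gamma_1}$ is a positive operator (hence monotone on nonnegative functions), then yields $P_{\gamma_2}^n g \le P_{\gamma_1}^n g$ for every $g \ge 0$ and every $n \ge 1$.

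Next I would combine this with the usual $|Af| \le A|f|$ bound valid for any positive kernel $A$ (itself proved by triangle inequality under the integral), iterated to give $|P_{\gamma_2}^n f| \le P_{\gamma_2}^n |f|$ for every $f \in \mathcal{B}$. Chaining these two inequalities,
\[
|P_{\gamma_2}^n f| \;\le\; P_{\gamma_2}^n |f| \;\le\; P_{\gamma_1}^n |f|,
\]
where all three functions belong to $\mathcal{B}$ because $\mathcal{B}$ is stable under $|\cdot|$ and $P_\gamma$ acts continuously on $\mathcal{B}$. Applying the two lattice axioms — namely $\|\,|h|\,\|_{\mathcal{B}} = \|h\|_{\mathcal{B}}$ and monotonicity of the norm with respect to $|\cdot|$ — gives
\[
\|P_{\gamma_2}^n f\|_{\mathcal{B}} \;=\; \|\,|P_{\gamma_2}^n f|\,\|_{\mathcal{B}} \;\le\; \|P_{\gamma_1}^n |f|\,\|_{\mathcal{B}} \;\le\; \|P_{\gamma_1}^n\|_{\mathcal{B}}\,\|\,|f|\,\|_{\mathcal{B}} \;=\; \|P_{\gamma_1}^n\|_{\mathcal{B}}\,\|f\|_{\mathcal{B}}.
\]
Taking the supremum over $\|f\|_{\mathcal{B}} = 1$ produces $\|P_{\gamma_2}^n\|_{\mathcal{B}} \le \|P_{\gamma_1}^n\|_{\mathcal{B}}$.

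Finally I would conclude by the Gelfand formula: raising to the power $1/n$ and letting $n \to \infty$ yields $r(\gamma_2) \le r(\gamma_1)$, which is the claimed monotonicity. I do not anticipate any serious obstacle here: the only point that requires a small amount of care is checking that all intermediate functions genuinely lie in $\mathcal{B}$ so that the lattice norm inequalities actually apply (in the case where elements of $\mathcal{B}$ are equivalence classes modulo $\pi$, the pointwise inequalities of course need only hold $\pi$-almost everywhere, which they do since $P$ is a Markov kernel with invariant measure $\pi$).
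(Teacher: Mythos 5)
Your proof is correct and follows essentially the same route as the paper: pointwise monotonicity of the kernels in $\gamma$, induction on powers, the Banach lattice norm inequality, and the Gelfand formula for the spectral radius. You merely spell out the intermediate step $|P_{\gamma_2}^n f|\le P_{\gamma_2}^n|f|$ more explicitly than the paper does, which is a welcome clarification rather than a deviation.
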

\begin{proof}
For any $0\le\gamma<\gamma'\le\infty$ and for any $f,g\in\cB$ such that 
$|f|\le|g|$, we have $e^{-\gamma'\xi} |f|\le e^{-\gamma\xi} |g|$ and so
$P_{\gamma'}|f|\le P_{\gamma}|g|$, which implies by induction that $P_{\gamma'}^n|f|\le P_{\gamma}^n|f|$ for every integer $n\ge 1$. We conclude that $\|P_{\gamma'}^n\|_{\cB}\le \|P_{\gamma}^n\|_{\cB}$
since $(\cB,\|\cdot\|_{\cB})$ is a Banach lattice.
This implies that $r(\gamma')\le r(\gamma)$ and so the desired statement.
\end{proof}
However it can happen that 
$r(\gamma)=1$ for every $\gamma\in[0,+\infty)$ (see Appendix \ref{counterexample}). 
To study the multiplicative ergodicity as well as regularity properties of $\gamma\mapsto r(\gamma) := r(P_{\gamma|\cB})$, where $\cB$ is a Banach space on which $P_\gamma$ continuously acts, we use
the Keller-Liverani perturbation theorem \cite{KelLiv99}. 
This result brings a significative improvement to the classical
Nagaev-Guivarc'h perturbation method \cite{nag1,nag2,GuivarchHardy,GuivarchLepage}.
Indeed, it enables the study of spectral properties of family of operators
$(Q(t))_{t\in J}$ acting on $\mathcal B_i$ such that 
$t\mapsto Q(t)$ fails to be continuous from $J$ to $\mathcal L(\mathcal B_i)$ but is continuous from $J$ to $\mathcal L(\mathcal B_i,\mathcal B_{i+1})$.
\begin{acond} \label{hypKL}
Let $\cB_0$ and $\cB_1$ be two Banach spaces, let $J$ be 
a subinterval of $[-\infty,+\infty]$, and let $(Q(t))_t$ be a family of operators. We will say that $((Q(t))_t,J,\cB_0,\cB_1)$ satisfies Hypothesis~\ref{hypKL} if
\begin{itemize}
\item $\cB_0 \hookrightarrow \cB_1$,
\item for every $t\in J$, $Q(t)\in\cL(\cB_0)\cap\cL(\cB_1)$,
\item the map $t\mapsto Q(t)$ is continuous from $J$ to
$\cL(\cB_0,\cB_1)$,
\item there exist $c_0>0$, $\delta_0>0$, $M>0$ such that 
\begin{subequations}
\begin{equation} \label{cond-r-ess-direct}
\forall t\in J,\quad r_{ess}\big(Q(t)_{|\cB_0}\big)\le\delta_0
\end{equation}
\begin{equation} \label{D-F-direct}
\forall t\in J,\ \forall n\geq 1,\ \forall f\in\cB_0,\quad 
\|Q(t)^n f\|_{\cB_0}\le c_0\big(\delta_0^n\| f\|_{\cB_0}+M^n\| f\|_{\cB_1}\big)
\end{equation}
\end{subequations}
\end{itemize}
\end{acond}
\noindent{\bf Hypothesis~\ref{hypKL}*.} 
{\it $((Q(t))_t,J,\cB_0,\cB_1)$ satisfies all the conditions of Hypothesis~\ref{hypKL}, except for (\ref{cond-r-ess-direct}) and (\ref{D-F-direct}) which are replaced by the following ones: 
\begin{subequations}
\begin{equation} \label{cond-r-ess-dual}
\forall t\in J,\quad r_{ess}\big(Q(t)^*_{\ |\cB_1^*}\big)\le\delta_0
\end{equation}
\begin{equation} \label{D-F-dual}
\forall t\in J,\ \forall n\geq 1,\ \forall f^*\in\cB_1^*,\quad 
\|(Q(t)^*)^n f^*\|_{\cB_1^*}\le c_0(\delta_0^n\| f^*\|_{\cB_1^*} + M^n\| f^*\|_{\cB_0^*})
\end{equation}
\end{subequations}
}
\begin{arem}\label{KL-remarque}
Hypothesis~\ref{hypKL} contains the conditions of the Keller-Liverani perturbation theorem \cite{KelLiv99} when applied to the family $\{Q(t),\, t\in J\}$ with respect to the spaces $\cB_0 \hookrightarrow \cB_1$. Hypothesis~\ref{hypKL}* contains the conditions of the Keller-Liverani theorem when applied to the family $\{Q(t)^*,\, t\in J\}$ with respect to $\cB_1^* \hookrightarrow \cB_0^*$. Indeed observe that the three first conditions of Hypothesis~\ref{hypKL}, which are assumed in Hypothesis~\ref{hypKL}*,  are equivalent to the following ones: $\cB_1^* \hookrightarrow \cB_0^*$, for every $t\in J$ we have $Q(t)^*\in\cL(\cB_0^*)\cap\cL(\cB_1^*)$, and finally $t\mapsto Q(t)^*$ is continuous from $J$ to $\cL(\cB_1^*,\cB_0^*)$. But it is worth noticing that the conditions (\ref{cond-r-ess-dual})-(\ref{D-F-dual}) cannot be deduced from (\ref{cond-r-ess-direct})-(\ref{D-F-direct}) (and conversely). 
\end{arem}

Let us now state the Keller-Liverani perturbation theorem in our context. 

\begin{atheo}[\cite{KelLiv99}]\label{thmkellerliverani1}
Under Hypothesis \ref{hypKL} (respectively under Hypothesis~\ref{hypKL}*) the function $t\mapsto r(t):=r((Q(t))_{|\cB_0})$ (respectively $t\mapsto r(t):=r((Q(t))_{|\cB_1})$) is continuous on the set $\{t\in J : r\big(Q(t)_{|\cB_0}\big)>\delta_0\}$ (respectively on $\{t\in J : r\big(Q(t)^*_{\ |\cB_1^*}\big)>\delta_0\}$). Moreover, in both cases, the following inequality holds: 
$$\forall t_0\in J,\quad \limsup_{t\rightarrow t_0}r(t)\le \max(\delta_0,r(t_0)).$$ 
\end{atheo}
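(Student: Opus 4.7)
The statement is the Keller--Liverani perturbation theorem \cite{KelLiv99}, which I would prove along their original lines. First I observe that the case of Hypothesis~\ref{hypKL}* reduces to the case of Hypothesis~\ref{hypKL}: as pointed out in Remark~\ref{KL-remarque}, the first three conditions of Hypothesis~\ref{hypKL}* are equivalent to the first three conditions of Hypothesis~\ref{hypKL} for the family $\{Q(t)^*\}$ on $\cB_1^* \hookrightarrow \cB_0^*$, while (\ref{cond-r-ess-dual})--(\ref{D-F-dual}) are the corresponding Doeblin--Fortet bounds for the adjoints. Since an operator and its adjoint share the same spectrum and hence the same spectral radius, the conclusion in the dual case follows from the direct case applied to $Q(t)^*$. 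Henceforth I write $r(t) := r(Q(t)_{|\cB_0})$ and work under Hypothesis~\ref{hypKL}.

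The pivot is the following key lemma: fix $t_0 \in J$ and $\lambda \in \C$ with $|\lambda| > \delta_0$ and $\lambda \notin \sigma(Q(t_0)_{|\cB_0})$; then there exist a neighbourhood $U \subset J$ of $t_0$ and a constant $C_\lambda > 0$ such that for every $t \in U$ the operator $\lambda I - Q(t)$ is invertible on $\cB_0$ with $\|(\lambda I - Q(t))^{-1}\|_{\cB_0} \le C_\lambda$. To prove it I would start from the resolvent identity
\begin{equation*}
(\lambda I - Q(t))^{-1} = (\lambda I - Q(t_0))^{-1} + (\lambda I - Q(t_0))^{-1}\bigl(Q(t) - Q(t_0)\bigr)(\lambda I - Q(t))^{-1}
\end{equation*}
and iterate it $N$ times, so as to replace the unknown resolvent on the right-hand side by a finite sum of known operators plus a remainder involving $Q(t)^N$. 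The explicit sum is controlled by $(\lambda I - Q(t_0))^{-1}$ and by the continuity of $t \mapsto Q(t)$ in $\cL(\cB_0,\cB_1)$; the remainder is tamed by the Doeblin--Fortet inequality (\ref{D-F-direct}) applied to $Q(t)^N$. The obstacle is that $\|Q(t) - Q(t_0)\|_{\cB_0}$ is \emph{not} small as $t\to t_0$, so a naive Neumann series cannot work; one must view $Q(t) - Q(t_0)$ as a small operator from $\cB_0$ into $\cB_1$ and ``re-enter'' $\cB_0$ via iterates of $Q(t)$, whose norms are precisely what (\ref{D-F-direct}) controls. Choosing $N$ first so that $c_0\delta_0^N$ is small, and then shrinking $U$ so that $\|Q(t)-Q(t_0)\|_{\cB_0,\cB_1}$ absorbs the factor $c_0 M^N$, yields a contraction and hence the uniform bound $C_\lambda$.

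Both assertions of the theorem then follow by Dunford calculus. For $\limsup_{t\to t_0} r(t) \le \max(\delta_0, r(t_0))$, fix any $\rho > \max(\delta_0, r(t_0))$; the circle $\{|\lambda|=\rho\}$ is compact and lies in the resolvent set of $Q(t_0)_{|\cB_0}$, so a finite covering by the disks furnished by the key lemma shows that $\{|\lambda|\ge \rho\}$ lies in the resolvent set of $Q(t)_{|\cB_0}$ for every $t$ in a neighbourhood of $t_0$, whence $r(t)\le \rho$. For the continuity on $\{t : r(t)>\delta_0\}$ the missing half is $\liminf_{t\to t_0} r(t) \ge r(t_0)$: I pick any $\rho \in (\delta_0, r(t_0))$ and a Jordan contour $\Gamma$ enclosing $\sigma(Q(t_0))\cap\{|\lambda|>\rho\}$, which is a finite set of eigenvalues of finite multiplicity thanks to (\ref{cond-r-ess-direct}). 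The spectral projection $\Pi(t):= \frac{1}{2\pi i}\oint_\Gamma (\lambda I - Q(t))^{-1}\,d\lambda$ is well defined on $\cB_0$ for $t$ near $t_0$ by the key lemma, and a rerun of the same iteration scheme shows $\Pi(t)\to \Pi(t_0)$ in $\cL(\cB_0)$. Since $\Pi(t_0)$ is a nonzero finite-rank projection, so is $\Pi(t)$ for $t$ close to $t_0$, forcing $\sigma(Q(t))\cap\{|\lambda|>\rho\}\ne \emptyset$ and hence $r(t)>\rho$; letting $\rho\uparrow r(t_0)$ concludes.

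The main obstacle is the key lemma itself. The failure of $Q(t)-Q(t_0)$ to be small in $\cL(\cB_0)$ rules out a direct Neumann series, and the whole quantitative balancing of the small quantities $c_0\delta_0^N$ and $\|Q(t)-Q(t_0)\|_{\cB_0,\cB_1}$ against the a priori large factor $c_0 M^N$ takes place there. Everything else---the compact covering for the upper bound and the Dunford projection argument for the lower bound---is routine once the resolvent has been constructed uniformly.
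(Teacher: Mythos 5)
The paper does not prove this statement: it is quoted from Keller--Liverani \cite{KelLiv99}, as the attribution makes plain, and a more detailed version is recalled (again without proof) as Theorem~\ref{thmkellerliverani} in Appendix~\ref{proofoperator}. So your task was to reconstruct the Keller--Liverani argument, and the overall architecture you describe --- reducing Hypothesis~\ref{hypKL}* to Hypothesis~\ref{hypKL} by passing to adjoints (as in Remark~\ref{KL-remarque}), a uniform resolvent bound obtained by balancing the small factor $c_0\delta_0^N$ and the weak-norm smallness of $Q(t)-Q(t_0)$ against the a priori large $c_0M^N$, and then Dunford calculus for the two halves of the continuity claim --- is the right blueprint.

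There is, however, a genuine gap in the key-lemma step as you have written it. Iterating the second resolvent identity
\[
R(t) = R(t_0) + R(t_0)\bigl(Q(t)-Q(t_0)\bigr)R(t)
\]
$N$ times produces the remainder $\bigl(R(t_0)(Q(t)-Q(t_0))\bigr)^N R(t)$, not a term involving $Q(t)^N$, so the Doeblin--Fortet inequality (\ref{D-F-direct}) cannot be ``applied to $Q(t)^N$'' there. The $Q(t)^N$ that (\ref{D-F-direct}) controls comes instead from the geometric expansion
\[
R(t)=\sum_{k=0}^{N-1}\lambda^{-(k+1)}Q(t)^k+\lambda^{-N}Q(t)^N R(t),
\]
which reduces the estimate of $\|R(t)f\|_{\cB_0}$ to that of $\|R(t)f\|_{\cB_1}$; the second resolvent identity is then invoked at this separate stage, to bound $\|R(t)f\|_{\cB_1}$ via the known $R(t_0)$ and the small quantity $\|Q(t)-Q(t_0)\|_{\cB_0,\cB_1}$, and one closes the loop by plugging this weak-norm bound back into the strong-norm estimate. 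Your informal phrase ``re-enter $\cB_0$ via iterates of $Q(t)$'' gestures at exactly this mechanism, but the displayed identity and the remainder you attribute to it do not carry it out, and as written the proof of the key lemma does not go through.

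A smaller imprecision: the convergence of the spectral projections $\Pi(t)\to\Pi(t_0)$ furnished by the Keller--Liverani machinery is in $\cL(\cB_0,\cB_1)$, not in $\cL(\cB_0)$. That weaker convergence is still sufficient for your $\liminf$ argument --- $\cB_0\hookrightarrow\cB_1$ is injective so $\Pi(t_0)\neq 0$ as an element of $\cL(\cB_0,\cB_1)$, hence $\Pi(t)\neq 0$ for $t$ near $t_0$, and $\Pi(t)$ is finite-rank because (\ref{cond-r-ess-direct}) caps the essential spectral radius below $\rho$ --- but the statement you made overclaims the mode of convergence.
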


Given a Banach space $\cB$ of functions on $\X$ (or of classes of such functions modulo $\pi$), 
recall that  $\psi\in\mathcal B ^*$ is said to be non-negative if $\psi(f)\ge 0$ for every $f\in\mathcal B$, $f\ge 0$. 
Let us introduce another assumption. 
\begin{acond}\label{hypcompl}
Let $\gamma\in[0,+\infty]$ and $\mathcal B$ be a Banach lattice 
of functions on $\X$ (or of classes of such functions modulo $\pi$). Assume that $1_\X\in\cB\subset \mathbb L^1(\pi)$, 
that $P_\gamma$ is quasi-compact on $\cB$ with spectral radius $r(\gamma):=r(P_{\gamma|\cB})>0$, and that
\begin{itemize}
\item if $\phi\in\cB$ is non-null and non-negative, then $P_\gamma \phi > 0$ (modulo $\pi$) and, for every non-null non-negative $\psi\in\cB^*\cap\ker(P_\gamma^*-r(\gamma)I)$, we have $\psi(P_\gamma\phi)>0$.
\item for every $f,g\in\cB$ with $f>0$, $P_\gamma f=r(\gamma)f$ and $P_\gamma g=r(\gamma)g$, we have $g\in \C\cdot f$,
\item 1 is the only complex number $\lambda$ of modulus 1 such that
$P(h/|h|)=\lambda h/|h|$  in $\L^1(\pi)$ for some $h\in\cB$, $|h|>0$, modulo $\pi$.
\end{itemize}
\end{acond}
Now we state general conditions ensuring namely the 
multiplicative ergodicity and the needed regularity properties of $r$. Under Hypothesis \ref{hypKL} or \ref{hypKL}* we define the following set: 
$$J_0:=\{t\in J : r(\gamma)>\delta_0\}.$$
\begin{atheo} \label{generalspectraltheorem1}
Let $\cB_0\hookrightarrow\cB_1\hookrightarrow \L^1(\pi)$ be two Banach spaces and let $J$ be a subinterval of $[0,+\infty]$. Assume that $(P_\gamma,J,\cB_0,\cB_1)$ satisfies Hypothesis \ref{hypKL} or \ref{hypKL}* and that 
\begin{itemize}
	\item Hypothesis \ref{hypcompl} holds with $J$ and $\cB :=\cB_0$ under Hypothesis \ref{hypKL}
	\item Hypothesis \ref{hypcompl} holds with $J$ and $\cB :=\cB_1$ under Hypothesis \ref{hypKL}*. 
\end{itemize}
Then $\gamma\mapsto r(\gamma) := r(P_{\gamma|\cB})$ is continuous on $J_0$, and there exists a map $\gamma\mapsto \Pi_\gamma$ from $J_0$ to $\cL(\cB)$ which is continuous from $J_0$ to 
$\mathcal L(\cB_0,\cB_1)$ such that, for every compact subset $K$ of $J_0$, there exist $\theta_K\in(0,1)$ and $M_K\in(0,+\infty)$ such that 
\begin{equation} \label{sup-vit}
\forall \gamma\in K,\ \forall f\in\cB,\quad 
\big\|(P_\gamma^n(f)) - r(\gamma)^n \Pi_\gamma f\big\|_{\cB} \leq M_K\, \big(\theta_K\, r(\gamma)\big)^n \|f\|_{\cB}.
\end{equation}
\end{atheo}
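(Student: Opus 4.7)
The plan is to combine, for each fixed $\gamma \in J_0$, a Perron--Frobenius type spectral analysis of $P_\gamma$ on $\cB$ with the perturbative output of the Keller--Liverani Theorem~\ref{thmkellerliverani1}, and then to upgrade the pointwise spectral decomposition to a uniform one on compact subsets of $J_0$.

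Fix $\gamma \in J_0$. Hypothesis~\ref{hypcompl} provides quasi-compactness of $P_\gamma$ on $\cB$ with $r(\gamma)>0$, while Hypothesis~\ref{hypKL} (resp.~\ref{hypKL}*) gives $r_{ess}(P_{\gamma|\cB})\le\delta_0<r(\gamma)$, so the peripheral spectrum $\sigma(P_\gamma)\cap\{|z|=r(\gamma)\}$ consists of finitely many eigenvalues of finite algebraic multiplicity. The Banach lattice structure together with the positivity statements of the first bullet of Hypothesis~\ref{hypcompl} then yield, by a standard Krein--Rutman argument, a strictly positive eigenfunction $v_\gamma$ associated with $r(\gamma)$; the second bullet collapses the corresponding eigenspace to $\C\cdot v_\gamma$, and a classical Perron argument for positive operators (exploiting the aperiodicity encoded in the third bullet) rules out any other peripheral eigenvalue and ensures algebraic simplicity of $r(\gamma)$. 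This produces the spectral decomposition
\begin{equation*}
P_\gamma = r(\gamma)\,\Pi_\gamma + N_\gamma, \qquad \Pi_\gamma N_\gamma = N_\gamma \Pi_\gamma = 0, \quad r(N_{\gamma|\cB}) < r(\gamma),
\end{equation*}
with $\Pi_\gamma$ a rank-one projector on $\cB$.

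To obtain the continuity statements I would then invoke Theorem~\ref{thmkellerliverani1}: continuity of $\gamma \mapsto r(\gamma)$ on $J_0$ is immediate, while the resolvent representation
\begin{equation*}
\Pi_\gamma = \frac{1}{2\pi i}\oint_{\Gamma}(zI-P_\gamma)^{-1}\, dz
\end{equation*}
along a small contour $\Gamma$ enclosing only $r(\gamma)$, combined with the Lasota--Yorke bound~(\ref{D-F-direct}), yields continuity of $\gamma\mapsto\Pi_\gamma$ from $J_0$ to $\cL(\cB_0,\cB_1)$. Under Hypothesis~\ref{hypKL}* the same scheme is applied to $\{P_\gamma^*\}$ on $\cB_1^*\hookrightarrow\cB_0^*$ using~(\ref{D-F-dual}), and $\Pi_\gamma$ on $\cB = \cB_1$ is recovered as the adjoint of the corresponding spectral projection of $P_\gamma^*$.

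The last step, and the place where the main difficulty sits, is the uniform estimate~(\ref{sup-vit}) on a compact $K\subset J_0$. By continuity of $r(\cdot)$, $r_- := \inf_K r(\gamma)>\delta_0$, and continuity of $\Pi_\gamma$ provides a uniform spectral gap: some $\theta_K\in(\delta_0/r_-,1)$ separates $r(\gamma)$ from the rest of $\sigma(P_\gamma)$ for every $\gamma\in K$. Writing
\begin{equation*}
P_\gamma^n - r(\gamma)^n \Pi_\gamma = \frac{1}{2\pi i}\oint_{\Gamma_\gamma} z^n (zI-P_\gamma)^{-1}\, dz
\end{equation*}
with $\Gamma_\gamma$ the circle of radius $\theta_K r(\gamma)$ centred at the origin, a standard contour integral estimate yields~(\ref{sup-vit}) provided one controls the resolvent uniformly on $\Gamma_\gamma$ for $\gamma\in K$. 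This uniform resolvent bound is the main obstacle: since we only dispose of continuity of $\gamma\mapsto P_\gamma$ in the mixed norm $\|\cdot\|_{\cB_0,\cB_1}$ (and not in $\|\cdot\|_\cB$), it cannot be obtained by naive compactness arguments on $K$, and must instead be extracted from the Doeblin--Fortet inequality~(\ref{D-F-direct}) (respectively~(\ref{D-F-dual})), exactly along the lines of the original Keller--Liverani estimates.
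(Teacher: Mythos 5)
Your proposal follows essentially the same route as the paper's proof. The positivity and aperiodicity conditions of Hypothesis~\ref{hypcompl}, combined with quasi-compactness and the essential spectral radius bound, give a simple dominant eigenvalue $r(\gamma)$ with a rank-one eigenprojector $\Pi_\gamma$ --- exactly what Proposition~\ref{firstorder} in the appendix records; continuity of $r$ on $J_0$, the Cauchy-integral representations of $\Pi_\gamma$ and of $P_\gamma^n - r(\gamma)^n\Pi_\gamma$, and duality for the Hypothesis~\ref{hypKL}* case all match the paper.

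Two remarks. First, the ``main obstacle'' you flag at the end --- a locally uniform bound on $\|(zI-P_\gamma)^{-1}\|_{\cB}$ for $\gamma$ near a fixed $\gamma_0\in K$ and $z$ on the relevant contours --- does not need to be re-derived from the Doeblin--Fortet inequality: it is already one of the \emph{conclusions} of the Keller--Liverani theorem (Theorem~\ref{thmkellerliverani} in the appendix gives $\sup_{t\in I_0,\, z\in\mathcal D(\delta,\varepsilon)}\|(zI-Q(t))^{-1}\|_{\cX_0}<\infty$ on a neighbourhood $I_0$ of $\gamma_0$). The remaining work in the paper is bookkeeping: set $\lambda(\gamma)$ as the largest modulus of the non-dominant spectrum, $\chi(\gamma):=\max(\delta_0,\lambda(\gamma))$, $\theta:=\max_K \chi/r<1$, $\theta_K:=\theta+\eta$ with $\theta+2\eta<1$, and verify that for $\gamma$ close to $\gamma_0$ the circles $|z|=(\theta+\eta)r(\gamma)$ all lie in a single admissible region $\mathcal D(\delta,\varepsilon)$ attached to $\gamma_0$; a finite cover of $K$ then yields the global $M_K$. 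Second, a small imprecision: the uniform separation of $r(\gamma)$ from the rest of the spectrum on $K$ is obtained from the continuity of $\chi$ on $J_0$ (itself a consequence of Theorem~\ref{thmkellerliverani}), not from continuity of $\Pi_\gamma$, which is a by-product of the construction rather than the source of the gap.
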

Under the assumptions of  Theorem \ref{generalspectraltheorem1}, we obtain for every $f\in\cB_0$ and for every $\psi\in\cB_1^*$:  
\begin{equation} \label{sup-vit-bis}
\forall \gamma\in K,\ \quad 
\big|\psi(P_\gamma^n(f)) - r(\gamma)^n \psi\big(\Pi_\gamma f\big)\big| \leq M_K\, \big(\theta_K\, r(\gamma)\big)^n \|\psi\|_{\cB_1^*}\|f\|_{\cB_0}.
\end{equation}
with $\gamma\mapsto r(\gamma)$ and $\gamma\mapsto \psi(\Pi_\gamma f)$ continuous from $J_0$ to $[0,1]$ and $\C$ respectively. 
Then \eqref{sup-vit} and \eqref{sup-vit-bis} can be interpreted as
{\bf spectral multiplicative ergodicity properties}.

Theorem~\ref{generalspectraltheorem1} is established in Appendix~\ref{ap-proof-general-th}. Since $1_\X\in\cB$ (see Hypothesis \ref{hypcompl}) with $\cB :=\cB_0$ or $\cB :=\cB_1$ according that  Hypothesis~\ref{hypKL} or \ref{hypKL}* is assumed, the following corollary easily follows from Theorem~\ref{generalspectraltheorem1}.   
\begin{acor} \label{cor-th1}
Assume that the assumptions of  Theorem \ref{generalspectraltheorem1} hold and
that $f\mapsto e^{-\gamma\xi}f$ is in $\mathcal L(\cB_1)$.
Let $\mu$ be a probability measure on $\mathbb X$ belonging to $\cB_1^*$ and satisfying: $\forall\gamma\in J_0,\ \mu(e^{-\gamma\xi}\Pi_\gamma\mathbf 1_{\X})>0$.
Then, under $\mathbb P_\mu$, $\rho_Y(\gamma)=r(\gamma)$ and the
sequence $(S_n)_n$ 
is multiplicatively ergodic on $J_0$ (with $A(\gamma)=\mu(e^{-\gamma\xi}\Pi_\gamma\mathbf 1_{\X})$). If moreover $\inf r(J_0) <1/2<\sup r(J_0)$, then $\nu$ is finite and is given by 
\begin{equation}\label{P1bis}
\nu=\inf\{\gamma>0\, :\, r(\gamma)<1/2\}.
\end{equation}
\end{acor}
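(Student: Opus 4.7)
The plan is to feed the spectral estimate \eqref{sup-vit-bis} into the representation \eqref{formuleFourier} via a well-chosen linear form. For $\gamma\in J_0$, introduce $\mu_\gamma(f):=\mu(e^{-\gamma\xi}f)$; since $\mu\in\mathcal B_1^*$ and multiplication by $e^{-\gamma\xi}$ belongs to $\mathcal L(\mathcal B_1)$ by hypothesis, we have $\mu_\gamma\in\mathcal B_1^*$, and in fact $\|\mu_\gamma\|_{\mathcal B_1^*}\le\|\mu\|_{\mathcal B_1^*}$ uniformly in $\gamma$ because $|e^{-\gamma\xi}|\le 1$. By \eqref{formuleFourier}, $L_Y^{(n)}(\gamma)=\mu_\gamma(P_\gamma^n\mathbf 1_{\X})$; applying \eqref{sup-vit-bis} with $f=\mathbf 1_{\X}$ and $\psi=\mu_\gamma$ (under Hypothesis \ref{hypKL}*, using instead the $\mathcal B=\mathcal B_1$ form of \eqref{sup-vit} and then evaluating $\mu_\gamma$) gives, for every compact $K\subset J_0$, constants $\theta_K\in(0,1)$ and $\widetilde M_K>0$ with
\[
\sup_{\gamma\in K}\bigl|L_Y^{(n)}(\gamma)-A(\gamma)\, r(\gamma)^n\bigr|\le \widetilde M_K\bigl(\theta_K\, r(\gamma)\bigr)^n,\qquad n\ge 1,
\]
and $A(\gamma)=\mu(e^{-\gamma\xi}\Pi_\gamma\mathbf 1_{\X})>0$ by hypothesis. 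This is exactly the inequality of Definition \ref{def-mult-erg} with $\rho=r$.

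To complete the multiplicative ergodicity I still need the continuity of $r$ and $A$ on $J_0$. That of $r$ is provided by Theorem \ref{generalspectraltheorem1}. For $A$, I would split
\[
A(\gamma)-A(\gamma_0)=\mu_\gamma\bigl((\Pi_\gamma-\Pi_{\gamma_0})\mathbf 1_{\X}\bigr)+\mu\bigl((e^{-\gamma\xi}-e^{-\gamma_0\xi})\Pi_{\gamma_0}\mathbf 1_{\X}\bigr).
\]
The first summand tends to $0$ by the continuity of $\gamma\mapsto\Pi_\gamma$ from $J_0$ to $\mathcal L(\mathcal B_0,\mathcal B_1)$ (Theorem \ref{generalspectraltheorem1}) together with the uniform bound on $\|\mu_\gamma\|_{\mathcal B_1^*}$; the second tends to $0$ by dominated convergence, the integrand converging pointwise to $0$ and being dominated by $2|\Pi_{\gamma_0}\mathbf 1_{\X}|$, which lies in $L^1(\mu)$ by the lattice property of $\mathcal B_1$ combined with $\mu\in\mathcal B_1^*$. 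Once $A>0$ is continuous, the displayed estimate forces $L_Y^{(n)}(\gamma)^{1/n}\to r(\gamma)$, hence $\rho_Y(\gamma)=r(\gamma)$.

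Finally, to pin down $\nu$, I invoke the equivalence recorded just after Definition \ref{def-mult-erg}: under multiplicative ergodicity, $g_Y(\gamma,\lambda)<\infty\Leftrightarrow\lambda<1/r(\gamma)$ for $\gamma\in J_0$. Hence $g_Y(\gamma,2)<\infty\Leftrightarrow r(\gamma)<1/2$, which is exactly \eqref{P1bis}. The condition $\inf r(J_0)<1/2<\sup r(J_0)$, together with the monotonicity of $r$ (Lemma \ref{LEMME0}) and its continuity on $J_0$, guarantees that $\{\gamma>0 : r(\gamma)<1/2\}$ is a non-empty proper right half-line, so $\nu$ is finite. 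The only real bookkeeping point is that, according as Hypothesis \ref{hypKL} or \ref{hypKL}* is in force, $\mathbf 1_{\X}$ sits in $\mathcal B_0$ or only in $\mathcal B_1$; in the first case one applies \eqref{sup-vit-bis} literally, in the second one applies \eqref{sup-vit} with $\mathcal B=\mathcal B_1$ and then pairs with $\mu_\gamma\in\mathcal B_1^*$. This is the most delicate part of the argument but is essentially notational.
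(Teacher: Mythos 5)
Your proposal is essentially the argument the paper has in mind: the authors say only that Corollary~\ref{cor-th1} ``easily follows from Theorem~\ref{generalspectraltheorem1}'' once one notes that $\mathbf 1_\X\in\cB$, and your use of \eqref{formuleFourier}, \eqref{sup-vit} (or \eqref{sup-vit-bis}), the positivity of $A$ from the standing hypothesis, and the equivalence following Definition~\ref{def-mult-erg} fills this out correctly.

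Two points are worth tightening. First, the uniform bound $\|\mu_\gamma\|_{\cB_1^*}\le\|\mu\|_{\cB_1^*}$ uses $\|e^{-\gamma\xi}f\|_{\cB_1}\le\|f\|_{\cB_1}$, which is not simply ``because $|e^{-\gamma\xi}|\le1$'' but relies on $\cB_1$ being a Banach lattice; Hypothesis~\ref{hypcompl} guarantees this lattice structure for $\cB=\cB_1$ under Hypothesis~\ref{hypKL}*, but under Hypothesis~\ref{hypKL} only $\cB_0$ is assumed to be a lattice, so strictly speaking one should either assume $\cB_1$ is a lattice or replace the uniform bound by a compactness argument. Second, your continuity argument for $A$ evaluates $\Pi_\gamma-\Pi_{\gamma_0}$ (continuous only in $\cL(\cB_0,\cB_1)$) at $\mathbf 1_\X$; this is fine under Hypothesis~\ref{hypKL}, where $\mathbf 1_\X\in\cB_0$, but under Hypothesis~\ref{hypKL}* the theorem only guarantees $\mathbf 1_\X\in\cB_1$, so this step is not ``merely notational''---one needs $\mathbf 1_\X\in\cB_0$ (true in both of the paper's applications but not formally part of the standing hypotheses). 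Both issues are glossed over in the paper itself, so this is less a gap in your proof than a genuine imprecision shared with the source; still, you should flag these lattice and membership assumptions explicitly rather than asserting the dual case is purely bookkeeping.
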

Now we are interesting in the differentiability of $r$ and in the sign of its derivative under assumptions analogous to those of Theorem \ref{generalspectraltheorem1}. 
\begin{atheo}\label{generalspectraltheorem2}
Assume $\pi(\xi>0)>0$. Let $\cB_0\hookrightarrow\cB_1\hookrightarrow\cB_2\hookrightarrow \cB_3 \hookrightarrow \L^1(\pi)$ be Banach spaces and let $J$ be a subinterval of $[0,+\infty]$. Assume that one of the two following conditions holds 
\vspace*{-2mm}  
\begin{enumerate}[(a)]
\item Either: for $i=0,1,2$, $(P_\gamma,J,\cB_i,\cB_{i+1})$ satisfies Hypothesis \ref{hypKL}, and Hypothesis~\ref{hypcompl} holds with $(J,\cB_{i})$  ; in this case we set $\cB:=\cB_0$. 
\item Or: for $i=0,1,2$, $(P_\gamma,J,\cB_i,\cB_{i+1})$ satisfies Hypothesis \ref{hypKL}*, and Hypothesis \ref{hypcompl} holds with $(J,\cB_{i+1})$ ; in this case we set $\cB:=\cB_3$. 
\end{enumerate}
Moreover assume that $\gamma\mapsto P_\gamma$ is continuous from $J$ to $\cL(\cB_i,\cB_{i+1})$ for $i\in\{0,2\}$ and $C^1$ from $J$ to $\cL(\cB_1,\cB_2)$ with derivative $P'_\gamma f=P_\gamma(-\xi f)$ ($f\mapsto \xi f$ being in $\cL(\cB_1,\cB_2)$). 

Then $\gamma\mapsto r(\gamma) := r(P_{\gamma|\cB})$ is $C^1$
on $J_0:=\{t\in J : r(\gamma)>\delta_0\}$ with negative derivative, $\gamma\mapsto\Pi_\gamma$ is well defined from $J_0$ to $\cL(\cB)$ and is $C^1$ from $J_0$ to $\cL(\cB_0,\cB_3)$. 
\end{atheo}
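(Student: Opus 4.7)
The plan is to apply Theorem~\ref{generalspectraltheorem1} three times---to each pair $(\cB_i,\cB_{i+1})$ for $i=0,1,2$---and identify the resulting spectral data across the $\cB_i$'s, then exploit the extra $C^1$ structure from $\cL(\cB_1,\cB_2)$ through a Riesz contour-integral representation of the projector. First, each application yields a spectral radius $r^{(i)}(\gamma)$ on the relevant space and a rank-one projector $\Pi_\gamma^{(i)}$ with a spectral gap. The positivity and uniqueness clauses of Hypothesis~\ref{hypcompl} force the positive eigenfunctions $\Pi_\gamma^{(i)}\mathbf{1}_\X$ to agree up to normalization, so $r^{(0)}=r^{(1)}=r^{(2)}=:r(\gamma)$ and the $\Pi_\gamma^{(i)}$ coincide on their common domain; set $\Pi_\gamma:=\Pi_\gamma^{(0)}$ in case (a) and $\Pi_\gamma:=\Pi_\gamma^{(2)}$ in case (b), so that $\Pi_\gamma\in\cL(\cB)$.

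Fix $\gamma_0\in J_0$ and choose a small positively oriented contour $\Gamma$ around $r(\gamma_0)$ isolating it from $\delta_0$ and from the remainder of $\sigma(P_{\gamma_0})$ on every $\cB_j$. The uniform Keller--Liverani resolvent bound built into the proof of Theorem~\ref{thmkellerliverani1} gives a neighborhood $V$ of $\gamma_0$ on which $\sup_{z\in\Gamma,\gamma\in V}\|(zI-P_\gamma)^{-1}\|_{\cB_j}<\infty$ for each relevant $\cB_j$, and
\[
\Pi_\gamma=\frac{1}{2\pi i}\oint_\Gamma (zI-P_\gamma)^{-1}\,\dd z
\]
holds consistently on every such space. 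From the resolvent identity,
\[
\frac{(zI-P_{\gamma+h})^{-1}-(zI-P_\gamma)^{-1}}{h}=(zI-P_{\gamma+h})^{-1}\,\frac{P_{\gamma+h}-P_\gamma}{h}\,(zI-P_\gamma)^{-1},
\]
read through the factorization $\cB_0\to\cB_1\to\cB_2\to\cB_3$ (inserting the continuous injections $\cB_i\hookrightarrow\cB_{i+1}$ on either side of the resolvents), the middle factor tends to $-P_\gamma(\xi\,\cdot)$ in $\cL(\cB_1,\cB_2)$ by hypothesis, the right-most factor is $h$-independent, and the left-most factor converges in $\cL(\cB_2,\cB_3)$ by a second application of the resolvent identity combined with the continuity of $\gamma\mapsto P_\gamma$ from $J$ to $\cL(\cB_2,\cB_3)$ and the uniform bound in $\cL(\cB_3)$. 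A standard triangle inequality gives convergence in $\cL(\cB_0,\cB_3)$ uniformly for $z\in\Gamma$, and integrating along $\Gamma$ promotes this to $C^1$ regularity of $\Pi_\gamma$ from $J_0$ to $\cL(\cB_0,\cB_3)$.

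For the derivative of $r$, set $h_\gamma:=\Pi_\gamma\mathbf{1}_\X>0$ (mod $\pi$) and let $\psi_\gamma$ be a non-negative left eigenvector normalized by $\psi_\gamma(h_\gamma)=1$; both are $C^1$ in $\gamma$ (the latter via the dual analogue of the previous paragraph). Differentiating $P_\gamma h_\gamma=r(\gamma)h_\gamma$ and pairing with $\psi_\gamma$, the terms in $h'_\gamma$ cancel via $\psi_\gamma\circ P_\gamma=r(\gamma)\psi_\gamma$, yielding
\[
r'(\gamma)=\psi_\gamma(P'_\gamma h_\gamma)=-\psi_\gamma\bigl(P_\gamma(\xi h_\gamma)\bigr)=-r(\gamma)\,\psi_\gamma(\xi h_\gamma).
\]
Since $\pi(\xi>0)>0$ and $h_\gamma>0$ force $\xi h_\gamma$ to be non-null and non-negative in $\cB$, the first clause of Hypothesis~\ref{hypcompl} gives $\psi_\gamma(P_\gamma(\xi h_\gamma))>0$; hence $\psi_\gamma(\xi h_\gamma)>0$ and $r'(\gamma)<0$.

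The main obstacle is the three-space telescoping in the middle paragraph: $\{P_\gamma\}$ is only continuous from $J$ to $\cL(\cB_i,\cB_{i+1})$ and $C^1$ from $J$ to $\cL(\cB_1,\cB_2)$, never from $J$ to $\cL(\cB_i)$. The factorization must be arranged so that the loss of operator-norm regularity on the outer two factors is absorbed by the $C^1$ middle factor, while uniform Keller--Liverani resolvent bounds hold simultaneously on all four $\cB_j$'s along a single common contour $\Gamma$. Once this bookkeeping is set up---which essentially repeats in a multi-space setting the strategy used to prove Theorem~\ref{generalspectraltheorem1}---the eigenvalue-derivative computation of the last paragraph is routine.
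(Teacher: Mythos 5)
Your proposal follows essentially the same route as the paper: apply Theorem~\ref{generalspectraltheorem1} on each pair $(\cB_i,\cB_{i+1})$, represent $\Pi_\gamma$ by a Cauchy contour integral of the resolvent, differentiate the resolvent via the resolvent identity read through the chain $\cB_0\to\cB_1\to\cB_2\to\cB_3$ (so that the single loss of regularity is absorbed by the $C^1$ middle factor), and then compute the eigenvalue derivative $r'(\gamma)=-r(\gamma)\psi_\gamma(\xi h_\gamma)<0$ from the positivity clause of Hypothesis~\ref{hypcompl}. The paper's eigenvalue-derivative step is the same computation, packaged as the contrapositive in Proposition~\ref{deriveenegative}.

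Two points need tightening. First, the identification $r^{(0)}=r^{(1)}=r^{(2)}$ cannot be \emph{deduced} from the uniqueness clause of Hypothesis~\ref{hypcompl} as you state it: uniqueness of the positive eigenfunction only applies once you already know the eigenvalues coincide, so the logic is circular. One must argue separately, either as in the paper's Lemma~\ref{rad-egaux} (match the common asymptotic $\pi(P_\gamma^n\mathbf{1}_\X)\sim c_i\, r_i(\gamma)^n$ coming from Theorem~\ref{generalspectraltheorem1} on each $\cB_i$), or by injecting the positive right-eigenfunction of $\cB_i$ into $\cB_{i+1}$ and the positive left-eigenfunctional of $\cB_{i+1}^*$ into $\cB_i^*$ to get the two inequalities between $r_i$ and $r_{i+1}$. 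Second, your sketch treats only case~(a); case~(b) (Hypothesis~\ref{hypKL}*) requires transposing the entire resolvent-differentiation argument to the adjoint operators on $\cB_3^*\hookrightarrow\cB_2^*\hookrightarrow\cB_1^*\hookrightarrow\cB_0^*$, which the paper carries out explicitly and which is not an automatic consequence of case~(a).
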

Theorem~\ref{generalspectraltheorem2} is proved in Appendix~\ref{ap-proof-general-th}. The next corollary follows from Theorem~\ref{generalspectraltheorem2}. 
\begin{acor} \label{cor-th2}
Assume that the assumptions of Theorem  \ref{generalspectraltheorem2} hold and that
$f\mapsto e^{-\gamma\xi}f$ is in $\mathcal L(\cB_3)$.
Let $\mu$ be a probability measure on $\mathbb X$ belonging to $\cB_3^*$ and satisfying: $\forall\gamma\in J_0,\ \mu(e^{-\gamma\xi}\Pi_\gamma\mathbf 1_{\X})>0$.
 then $(S_n)_n$
is multiplicatively ergodic on $J_0$ with respect to $\mathbb P_\mu$. If moreover $\inf r(J_0) <1/2<\sup r(J_0)$, then $\nu$ is finite and is given by \eqref{P1bis} and $C_\nu$ of \eqref{P2} is well defined and finite.
\end{acor}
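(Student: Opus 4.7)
The plan is to deduce Corollary~\ref{cor-th2} by applying Corollary~\ref{cor-th1} to obtain multiplicative ergodicity, and then invoking Remark~\ref{multergodP1P2} combined with the stronger $C^1$-regularity of $r$ and $\Pi_\gamma$ supplied by Theorem~\ref{generalspectraltheorem2} to derive \eqref{P1bis} and the finiteness of $C_\nu$. First I would observe that the hypotheses of Theorem~\ref{generalspectraltheorem2} automatically contain those of Theorem~\ref{generalspectraltheorem1} for at least one adjacent pair compatible with the multiplier assumption $f\mapsto e^{-\gamma\xi}f\in\cL(\cB_3)$: in case (a) one selects $i=2$ (Hypothesis~\ref{hypKL} on $(\cB_2,\cB_3)$ and Hypothesis~\ref{hypcompl} on $\cB_2$); in case (b) one selects $i=0$ (Hypothesis~\ref{hypKL}* on $(\cB_0,\cB_1)$ and Hypothesis~\ref{hypcompl} on $\cB_1$). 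Since $\mu\in\cB_3^*$ naturally restricts to an element of $\cB_i^*$ through the continuous inclusion $\cB_i\hookrightarrow\cB_3$, the remaining hypotheses of Corollary~\ref{cor-th1} are met and that corollary yields the multiplicative ergodicity of $(S_n)_n$ on $J_0$ under $\mathbb P_\mu$ with $\rho(\gamma)=r(\gamma)$ and $A(\gamma)=\mu(e^{-\gamma\xi}\Pi_\gamma\mathbf 1_\X)$, which is the first assertion of the statement.

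For the second assertion, suppose $\inf r(J_0)<1/2<\sup r(J_0)$. Theorem~\ref{generalspectraltheorem2} ensures that $r$ is $C^1$ on $J_0$ with $r'<0$, hence continuous and strictly decreasing. The intermediate value theorem then supplies a unique $\nu\in J_0$ with $r(\nu)=1/2$, and strict monotonicity gives $\{\gamma\in J_0:r(\gamma)<1/2\}=(\nu,+\infty)\cap J_0$, which is non-empty by hypothesis. Combined with the equivalence $g_Y(\gamma,2)<\infty\Leftrightarrow\rho(\gamma)<1/2$ derived from multiplicative ergodicity (see the discussion following Definition~\ref{def-mult-erg}), this produces \eqref{P1bis}.

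For the finiteness of $C_\nu$ I would invoke Remark~\ref{multergodP1P2}: since $\rho=r$ is differentiable at $\nu$ with $\rho'(\nu)=r'(\nu)<0$, since $A(\nu)>0$ by hypothesis, and since $A$ is continuous at $\nu$, the formula $C_\nu=-A(\nu)/\bigl(2\nu\,r'(\nu)\bigr)$ produces a finite positive value. The main (modest) technical hurdle here is precisely the continuity of $A$: it does not follow from a single statement in Theorem~\ref{generalspectraltheorem2} and I expect to obtain it by combining the continuity of $\gamma\mapsto\Pi_\gamma\mathbf 1_\X$ in $\cB_3$—available from the $\cL(\cB_0,\cB_3)$-continuity of $\Pi_\gamma$ supplied by Theorem~\ref{generalspectraltheorem2} when $\mathbf 1_\X\in\cB_0$, or otherwise by applying Theorem~\ref{generalspectraltheorem1} to an intermediate pair such as $(\cB_2,\cB_3)$—with a dominated-convergence argument for the multiplier $\gamma\mapsto e^{-\gamma\xi}$ on the Banach lattice $\cB_3$, exploiting the pointwise bound $|e^{-\gamma_n\xi}-e^{-\gamma\xi}|\le|\gamma_n-\gamma|\,\xi\,e^{-\min(\gamma_n,\gamma)\xi}$ as $\gamma_n\to\gamma$ in $J_0$.
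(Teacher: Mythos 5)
Your overall route is the natural one and essentially what the paper intends (the paper simply asserts that Corollary~\ref{cor-th2} follows from Theorem~\ref{generalspectraltheorem2}, so there is no written proof to compare against): reduce the first assertion to Corollary~\ref{cor-th1} applied to a suitable adjacent pair, then combine the $C^1$ regularity and $r'<0$ from Theorem~\ref{generalspectraltheorem2} with Remark~\ref{multergodP1P2} to obtain \eqref{P1bis} and the finiteness of $C_\nu$. The derivation of \eqref{P1bis} from strict monotonicity and the equivalence $g_Y(\gamma,2)<\infty\Leftrightarrow r(\gamma)<1/2$ is fine, and the identification $C_\nu=-A(\nu)/(2\nu\,r'(\nu))$ via Remark~\ref{multergodP1P2} is exactly the intended mechanism.

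There is, however, a concrete slip in your reduction to Corollary~\ref{cor-th1} in case (b). You propose to take $i=0$, i.e.\ to apply Corollary~\ref{cor-th1} with the pair $(\cB_0,\cB_1)$ under Hypothesis~\ref{hypKL}*. But Corollary~\ref{cor-th1} then requires the multiplier $f\mapsto e^{-\gamma\xi}f$ to lie in $\cL(\cB_1)$, and this is \emph{not} among the hypotheses of Corollary~\ref{cor-th2}, which only gives $f\mapsto e^{-\gamma\xi}f\in\cL(\cB_3)$. (The paper's definition of Banach lattice does not make the spaces ideal, so boundedness of a multiplier on $\cB_3$ does not pass to the smaller space $\cB_1$.) The restriction of $\mu$ from $\cB_3^*$ to $\cB_1^*$ is fine, but the multiplier hypothesis fails. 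The correct choice in case (b) is $i=2$, i.e.\ the pair $(\cB_2,\cB_3)$ with Hypothesis~\ref{hypcompl} on $\cB_3$; with this choice Corollary~\ref{cor-th1} sees exactly the space $\cB_3$ on which both $\mu$ and the multiplier are controlled, and (by Lemma~\ref{rad-egaux} and the remark after Theorem~\ref{generalspectraltheorem2}) the spectral radius and $J_0$ are unchanged. In case (a) your choice $i=2$ is already the right one. Once this index is corrected, your argument goes through.

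On the continuity of $A$: you are right that this is the one point the paper leaves implicit. Your dominated-convergence plan works, but note that to control $(e^{-\gamma'\xi}-e^{-\gamma\xi})\Pi_\gamma\mathbf 1_\X$ in $\cB_3$ you need $\xi\,\Pi_\gamma\mathbf 1_\X\in\cB_3$; this is available precisely because the $C^1$ hypothesis of Theorem~\ref{generalspectraltheorem2} supplies $f\mapsto\xi f\in\cL(\cB_1,\cB_2)$ and $\Pi_\gamma\mathbf 1_\X\in\cB_1$, so $\xi\,\Pi_\gamma\mathbf 1_\X\in\cB_2\hookrightarrow\cB_3$, and then the multiplier $e^{-c\xi}$ keeps it in $\cB_3$. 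A shorter alternative you might prefer: by \eqref{sup-vit-bis} with the uniform bound $\|\mu(e^{-\gamma\xi}\cdot)\|_{\cB_3^*}\le\|\mu\|_{\cB_3^*}$ (the multiplier has norm $\le1$ on the Banach lattice), the functions $\gamma\mapsto r(\gamma)^{-n}\,\EE_\mu[e^{-\gamma S_n}]$ converge to $A$ uniformly on compacts of $J_0$, and each of them is continuous (by continuity of $r$ and dominated convergence in the Laplace transform), so $A$ is continuous as a uniform limit.
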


\begin{arem} $\ $ \\
\vspace*{-8mm}
\begin{itemize}
  \item In the previous statements, the Banach lattice assumption in   Hypothesis~\ref{hypcompl} can be replaced by: $r(\gamma)$ is a pole of finite order of $P_\gamma$.
	\item As already mentioned, the (expected) nonincreasingness of $r(\cdot)$ is guaranteed since our spaces are assumed to be Banach lattices (see Lemma \ref{LEMME0}). Note that this implies that $J_0$ is an interval. 
 \item We will see in Appendix~\ref{proofoperator} that $P_\gamma(\Pi_\gamma\mathbf 1_\X) = r(\gamma) \Pi_\gamma\mathbf 1_\X$. Then we deduce from the first condition in  Hypothesis~\ref{hypcompl} that $\Pi_\gamma\mathbf 1_\X > 0\ \, \pi-$a.s., where $\pi$ is the stationary distribution of $(X_n)_n$. Consequently, if the assumptions of Corollary~\ref{cor-th1} (respectively Corollary~\ref{cor-th2}) are fulfilled, then its conclusions hold true with $\mu=\pi$ since  $\pi\in\cB_1^*$  since  $\cB_1\hookrightarrow \L^1(\pi)$ 
(resp. $\pi\in\cB_3^*$  since  $\cB_3\hookrightarrow \L^1(\pi)$)
and $\pi(e^{-\gamma\xi}\Pi_\gamma\mathbf 1_{\X})>0$. This is also true for any probability measure $\mu\in\cB_1^*$ (respectively $\mu\in\cB_3^*$) which is absolutely continuous with respect to $\pi$. If $\Pi_\gamma\mathbf 1_\X > 0$ everywhere, then the conclusions  of Corollary~\ref{cor-th1} (respectively Corollary~\ref{cor-th2}) hold for any $\mu\in\cB_1^*$  (respectively $\mu\in\cB_3^*$). 
	\item In Case $(a)$ of Theorem  \ref{generalspectraltheorem2} we will prove in Appendix that, for every $\gamma\in J_0$, the spectral radius $r(P_{\gamma| \cB_i})$ does not depend on $i\in\{0,1,2\}$, and that (\ref{sup-vit}) holds on $\cB_i$ for every $i=0,1,2$. In Case $(b)$ the same properties hold for $i=1,2,3$. 
\end{itemize}
\end{arem}

We conclude this section with some complementary results which may be useful. 
\begin{acor} \label{cor-th1v1}
Assume that the assumptions of Theorem  \ref{generalspectraltheorem1} hold 
and that
$f\mapsto e^{-\gamma\xi}f$ is in $\mathcal L(\cB_1)$.
Let $\mu$ be a probability measure on $\mathbb X$ belonging to $\cB_1^*$ and satisfying: $\forall\gamma\in J_0,\ \mu(e^{-\gamma\xi}\Pi_\gamma\mathbf 1_{\X})>0$.  
\begin{enumerate}[(i)]
	\item If $J=[0,+\infty]$ and if $\alpha_0$ is a positive real number such that $\alpha_0 <1/\delta_0$, then under $\mathbb P_\mu$,
for every $\gamma\in [0,+\infty]$,
we have
$$ g_{Y,\mu}(\gamma,\alpha_0)<\infty\ \Leftrightarrow\ r (\gamma)<1/{\alpha_0}.$$
  \item If  $J=[0,+\infty]$ and $\delta_0 <1/2$,
then $\nu<\infty\ \Leftrightarrow\ r(\infty)<1/2$.
\end{enumerate}
\end{acor}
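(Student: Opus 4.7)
The plan is to combine the resolvent formula~\eqref{formuleFourier}, namely $g_Y(\gamma,\lambda)=\mu\bigl(e^{-\gamma\xi}(I-\lambda P_\gamma)^{-1}\mathbf{1}_\X\bigr)$ valid for $\lambda<1/r(\gamma)$, with the sharp multiplicative ergodicity provided by Corollary~\ref{cor-th1} on the set $J_0$. Assertion (ii) will then follow from (i) by taking $\alpha_0=2$, since the hypothesis $\delta_0<1/2$ is precisely the admissibility condition $\alpha_0<1/\delta_0$.

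For the implication ``$r(\gamma)<1/\alpha_0\Rightarrow g_Y(\gamma,\alpha_0)<\infty$'' in (i), I would note that $\alpha_0<1/r(\gamma)$ makes $I-\alpha_0 P_\gamma$ invertible on $\cB$ via the Neumann series (where $\cB=\cB_0$ under Hypothesis~\ref{hypKL} and $\cB=\cB_1$ under Hypothesis~\ref{hypKL}*), so $(I-\alpha_0 P_\gamma)^{-1}\mathbf{1}_\X$ lies in $\cB\subset\cB_1$; combined with $\mu\in\cB_1^*$ and $f\mapsto e^{-\gamma\xi}f\in\cL(\cB_1)$, this yields $g_Y(\gamma,\alpha_0)<\infty$. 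For the converse implication I split on whether $\gamma\in J_0$. If $\gamma\notin J_0$, then $r(\gamma)\le\delta_0<1/\alpha_0$ by definition of $J_0$. If $\gamma\in J_0$, Corollary~\ref{cor-th1} gives $L_Y^{(n)}(\gamma)=A(\gamma)\,r(\gamma)^n+O\bigl((\theta_K r(\gamma))^n\bigr)$ with $A(\gamma)=\mu(e^{-\gamma\xi}\Pi_\gamma\mathbf{1}_\X)>0$, hence $L_Y^{(n)}(\gamma)\ge\tfrac12 A(\gamma)\,r(\gamma)^n$ for $n$ large, and summability of $\sum_n \alpha_0^n L_Y^{(n)}(\gamma)$ then forces $\alpha_0 r(\gamma)<1$.

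For (ii), applying (i) with $\alpha_0=2$ rewrites $\nu=\inf\{\gamma>0\,:\,r(\gamma)<1/2\}$. The implication $\nu<\infty\Rightarrow r(\infty)<1/2$ follows from monotonicity: if some finite $\gamma$ satisfies $g_Y(\gamma,2)<\infty$, then $r(\gamma)<1/2$ by (i), and Lemma~\ref{LEMME0} (whose proof remains valid for $\gamma'=\infty$ since $e^{-\gamma'\xi}|f|\le e^{-\gamma\xi}|g|$ still holds) yields $r(\infty)\le r(\gamma)<1/2$. For the converse, if $r(\infty)<1/2$ the Keller--Liverani bound of Theorem~\ref{thmkellerliverani1} gives $\limsup_{\gamma\to\infty}r(\gamma)\le\max(\delta_0,r(\infty))<1/2$, producing some finite $\gamma_0$ with $r(\gamma_0)<1/2$; by (i) this forces $g_Y(\gamma_0,2)<\infty$, so $\nu\le\gamma_0<\infty$.

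The subtle point in the whole plan is obtaining the \emph{strict} inequality $r(\gamma)<1/\alpha_0$ in the direction ``$g_Y(\gamma,\alpha_0)<\infty\Rightarrow r(\gamma)<1/\alpha_0$'' when $\gamma\in J_0$: a crude limsup argument would only produce $r(\gamma)\le 1/\alpha_0$ from the summability of $\sum_n\alpha_0^n L_Y^{(n)}(\gamma)$. Ruling out the equality case requires a matching \emph{lower} envelope for $L_Y^{(n)}(\gamma)$, and this is precisely what the positivity of the asymptotic constant $A(\gamma)$ delivers; hence the standing hypothesis $\mu(e^{-\gamma\xi}\Pi_\gamma\mathbf{1}_\X)>0$ is essential to the argument.
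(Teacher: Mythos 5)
Your proof is correct and matches the paper's approach: the equivalence on $J_0$ via multiplicative ergodicity (Corollary~\ref{cor-th1} and Remark~\ref{multergodP1P2}), the trivial verification off $J_0$ from $r(\gamma)\le\delta_0<1/\alpha_0$, and (ii) via monotonicity (Lemma~\ref{LEMME0}) together with the Keller--Liverani bound $\limsup_{\gamma\to\infty}r(\gamma)\le\max(\delta_0,r(\infty))$ from Theorem~\ref{thmkellerliverani1}. Your uniform Neumann-series argument for the direction $r(\gamma)<1/\alpha_0\Rightarrow g_{Y,\mu}(\gamma,\alpha_0)<\infty$ is a mild streamlining of the paper's case split, which instead bounds $L_{Y,\mu}^{(n)}(\gamma)\le C\delta_0^n$ on $[0,+\infty]\setminus J_0$.
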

\begin{proof}
Observe first that, for every $\gamma\in J_0$, we have 
$$g_{Y,\mu}(\gamma,\alpha_0)<\infty\ \Leftrightarrow\ r(\gamma)<\frac 1{\alpha_0}$$
due to Theorem \ref{generalspectraltheorem1} and to Remark \ref{multergodP1P2}. Now let us consider the first case ($J=[0,+\infty] $ and $\delta_0<1/\alpha_0$). If $\gamma \in [0,+\infty]\setminus J_0$, then $r(\gamma)\le \delta_0<1/\alpha_0$ and
$$g_{Y,\mu}(\gamma,\alpha_0)=\sum_{n=0}^{+\infty}\alpha_0^nL_{Y,\mu}^{(n)}(\gamma) 
      \le  \sum_{n=0}^{+\infty}\alpha_0^nC\delta_0^n<\infty.$$
Hence Assertion~$(i)$ is fulfilled. Now, due to Lemma \ref{LEMME0} and to Theorem \ref{thmkellerliverani1}, we have $\lim_{t\rightarrow +\infty}r(t)\le \max(\delta_0,r(\infty))$ and even $\lim_{t\rightarrow +\infty}r(t)=r(\infty)$ if $r(\infty)>\delta_0$. So if $\delta_0 <1/2$, we have
$\eqref{P1}\ \Leftrightarrow\ r(\infty)<1/2$.
\end{proof}
\begin{alem}\label{rnonnul}
Under the assumptions of Theorem \ref{generalspectraltheorem1}, if $ J_0\neq\{0\}$, then we have $r(\gamma)>0$
for every $\gamma\in(0,+\infty)$.
\end{alem}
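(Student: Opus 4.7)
The plan is to combine the monotonicity of $r$ from Lemma \ref{LEMME0} with a Jensen-type comparison between $P_\gamma^n$ and $P_{\gamma_0}^n$. Since $J_0 \ne \{0\}$, I can pick $\gamma_0 \in J_0$ with $\gamma_0 > 0$, so that $r(\gamma_0) > \delta_0 > 0$. For $0 < \gamma \le \gamma_0$, Lemma \ref{LEMME0} yields $r(\gamma) \ge r(\gamma_0) > 0$ at once. Thus the only real work is at $\gamma > \gamma_0$, where the monotonicity goes the wrong way and an independent lower bound must be produced.

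For $\gamma > \gamma_0$, I would start from the probabilistic representation $P_\gamma^n\mathbf{1}_\X(x) = \E_x\bigl[\exp\bigl(-\gamma\sum_{k=1}^n\xi(X_k)\bigr)\bigr]$ and apply Jensen's inequality with the convex function $u\mapsto u^{\gamma/\gamma_0}$ (whose exponent is $\ge 1$), obtaining the pointwise bound
$$P_\gamma^n\mathbf{1}_\X \ge \bigl(P_{\gamma_0}^n\mathbf{1}_\X\bigr)^{\gamma/\gamma_0}.$$
A second application of Jensen with respect to the probability $\pi$ then gives
$$\pi\bigl(P_\gamma^n\mathbf{1}_\X\bigr) \ge \bigl(\pi\bigl(P_{\gamma_0}^n\mathbf{1}_\X\bigr)\bigr)^{\gamma/\gamma_0}.$$

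Next I would invoke Theorem \ref{generalspectraltheorem1} at $\gamma_0 \in J_0$ to write $P_{\gamma_0}^n = r(\gamma_0)^n\Pi_{\gamma_0} + R_n$ with $\|R_n\|_\cB \le M_K(\theta_K r(\gamma_0))^n$ for some $\theta_K \in (0,1)$. Since $\cB \hookrightarrow \L^1(\pi)$, the functional $\pi$ belongs to $\cB^*$; and since Hypothesis \ref{hypcompl} implies $\Pi_{\gamma_0}\mathbf{1}_\X > 0$ $\pi$-a.s. (the eigenfunction in the rank-one peripheral projection), we have $\pi(\Pi_{\gamma_0}\mathbf{1}_\X) > 0$. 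Applying $\pi$ to the decomposition therefore yields $\pi(P_{\gamma_0}^n\mathbf{1}_\X)^{1/n} \to r(\gamma_0)$, and inserting this into the Jensen bound gives
$$\liminf_{n\to\infty}\pi\bigl(P_\gamma^n\mathbf{1}_\X\bigr)^{1/n} \ge r(\gamma_0)^{\gamma/\gamma_0} > 0.$$

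Finally, the continuity of $\pi$ on $\cB$ gives $\pi(P_\gamma^n\mathbf{1}_\X) \le \|\pi\|_{\cB^*}\,\|\mathbf{1}_\X\|_\cB\,\|P_\gamma^n\|_\cB$, so Gelfand's formula $r(\gamma) = \lim_n\|P_\gamma^n\|_\cB^{1/n}$ produces the desired lower bound $r(\gamma) \ge r(\gamma_0)^{\gamma/\gamma_0} > 0$. The only substantive new input is the two-step Jensen comparison linking different values of $\gamma$; I expect this to be the main (but mild) obstacle, since it requires the chosen $\gamma_0$ to be positive and to lie in $J_0$ so that the spectral picture of Theorem \ref{generalspectraltheorem1} is available. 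The rest is a direct extraction from the spectral framework already in place.
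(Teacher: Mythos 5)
Your proof is correct and follows essentially the same route as the paper: handle $\gamma\le\gamma_0$ by monotonicity (Lemma~\ref{LEMME0}), and for $\gamma>\gamma_0$ use a convexity comparison of $e^{-\gamma S_n}$ against $e^{-\gamma_0 S_n}$ together with the spectral expansion at $\gamma_0\in J_0$ to identify $\lim_n(\pi(P_{\gamma_0}^n\mathbf 1_\X))^{1/n}=r(\gamma_0)$. The only cosmetic difference is that the paper performs the comparison in a single Hölder/Jensen step directly under $\E_\pi$, whereas you split it into a pointwise Jensen followed by a second Jensen for $\pi$; the two are equivalent.
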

\begin{proof}
Let $\gamma_0\in J_0$. Due to Lemma
\ref{LEMME0}, $ r (\gamma)\ge r (\gamma_0)> 0$ for every $\gamma\le\gamma_0$.
Next let $\gamma>\gamma_0$ and set $p:=\gamma/\gamma_0>1$. We have
$$0<r (\gamma_0)=r\left(\frac{\gamma}p\right)
   =\lim_{n\rightarrow +\infty}(\pi(P_{\frac\gamma p}^n\mathbf 1_{\mathbb X}))^{\frac 1n} =\lim_{n\rightarrow+\infty
}(\E_\pi[e^{-\frac\gamma p S_n}])^{\frac 1n}$$
due to \eqref{sup-vit} since $\mathbf 1_{\mathbb X}\in \mathcal B$ and since
$\pi\in\mathcal B'$. Moreover, due to the H\"older inequality
we obtain
$$0 <r\left(\gamma_0\right)=\lim_{n\rightarrow+\infty
}(\pi(e^{-\frac\gamma p S_n}))^{\frac 1n}   \le
     \limsup_{n\rightarrow+\infty}(E_\pi[e^{-\gamma S_n}])^{\frac 1{pn}}
               \le (r(\gamma))^{\frac 1p},$$
since $\mathbf 1_{\mathbb X}\in \mathcal B$ and $\pi\in\mathcal B'$,
which implies the positivity of $r(\gamma)$.
\end{proof}

%===================================
%===================================
\section{Knudsen gas: Proof of Theorem \ref{pro-Knudsen}\label{Knudsengas}}

%===================================
%===================================

In this section, we apply our general results for the Knudsen gas
and more precisely to $P_\gamma$ acting on the usual Lebesgue space  $(\mathbb L^a(\pi),\|\cdot\|_a)$ for some suitable $a\in[1,+\infty)$, where 
\begin{equation} \label{def-La}
\|f\|_a:=\left(\int_{\X}|f(x)|^a\, d\pi(x)\right)^{\frac 1 a}.
\end{equation}
The multiplicative ergodicity follows from Corollary~\ref{cor-th1}
together with the following lemma.
\begin{alem}\label{prop1}
Let $1\le b<a$.
\begin{enumerate}[(i)]
\item For every $\gamma\ge 0$, $r_{ess}(P_{\gamma|\L^{a}(\pi)})\le 1-\alpha$.
\item The function $\gamma\rightarrow P_\gamma $ is continuous from $(0,+\infty]$ to $\mathcal L(\L^{a}(\pi) ,\mathbb L^b(\pi))$.
\item For any $\gamma\in[0,+\infty]$ and any $f\in\L^a(\pi)$, 
$\|P_\gamma f\|_a\le (1-\alpha)\|f\|_a + \alpha\|f\|_1$.
\item For any $\gamma>0$, for any non-null non-negative $f\in\L^a(\pi)$ and every non-null non-negative $g\in\L^{a'}(\pi)$ with $a'= \frac{a}{a-1}$, we have  
$\pi(gP_\gamma f)>0$ and $P_\gamma f >0$.
\item If $r(\gamma)>1-\alpha$, for every $f,g\in\L^a(\pi)$ with $f>0$, $P_\gamma f=r(\gamma)f$ and $P_\gamma g=r(\gamma)g$, then we have $g\in \C\cdot f$.
\item 1 is the only complex number $\lambda$ of modulus 1 such that
$P(h/|h|)=\lambda h/|h|$  in $\L^1(\pi)$ for some $h\in\cB$, $|h|>0$ (modulo $\pi$).
\end{enumerate}
\end{alem}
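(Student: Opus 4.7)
The cornerstone of my approach is the decomposition
$$P_\gamma = \alpha K_\gamma + (1-\alpha) U_\gamma,$$
where $K_\gamma f := \pi(e^{-\gamma \xi} f)\, \mathbf 1_\X$ is a rank one (hence compact) operator on $\L^a(\pi)$, and $U_\gamma f(x) := \int f(y) e^{-\gamma \xi(y)}\, U(x, dy)$. Since $|U_\gamma f| \le U|f|$ pointwise and $\pi U = \pi$, Jensen's inequality yields $\|U_\gamma\|_{\L^a(\pi)} \le 1$ for every $a \in [1, +\infty]$; this contraction estimate will be used throughout.

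Items $(iii)$ and $(i)$ then follow immediately: for $(iii)$, the pointwise inequality $|P_\gamma f| \le \alpha \pi(|f|) + (1-\alpha) U|f|$ followed by the $\L^a$-norm yields the estimate; for $(i)$, the compactness of $K_\gamma$ shows that $P_\gamma$ and $(1-\alpha) U_\gamma$ share the same essential spectral radius, which is at most $(1-\alpha) \|U_\gamma\|_{\L^a(\pi)} \le 1-\alpha$. For $(ii)$, I split $P_\gamma - P_{\gamma_0}$ into its $K$ and $U$ pieces. The $K$ part is bounded from $\L^a(\pi)$ to $\L^b(\pi)$ by $\|e^{-\gamma\xi} - e^{-\gamma_0\xi}\|_{a'}$ via H\"older. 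For the $U$ part, I bound $|U_\gamma f - U_{\gamma_0} f|(x)^b$ pointwise by applying Jensen to the probability kernel $U(x, \cdot)$, integrate against $\pi$ using $\pi U = \pi$, and then apply H\"older with exponents $a/b > 1$ and $(a/b)'$ to control the result by $\|f\|_a \cdot \|e^{-\gamma\xi} - e^{-\gamma_0\xi}\|_q$ for a finite $q$. Both norm differences vanish as $\gamma \to \gamma_0$ by dominated convergence, including the endpoint $\gamma_0 = +\infty$, where the limiting integrand is $\mathbf 1_{\{\xi = 0\}}$.

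For $(iv)$, I note that $P_\gamma f(x) \ge \alpha \pi(e^{-\gamma\xi} f)$; for $\gamma > 0$ and $f \ge 0$ non-null, this lower bound is a strictly positive constant (since $e^{-\gamma\xi} > 0$), so $P_\gamma f > 0$ everywhere and $\pi(g P_\gamma f) \ge \alpha \pi(e^{-\gamma\xi} f)\, \pi(g) > 0$ for every non-null non-negative $g \in \L^{a'}(\pi)$. For $(v)$, I write $c_\phi := \pi(e^{-\gamma\xi} \phi)$. The hypothesis $f > 0$ forces $c_f > 0$, so $h := g - (c_g/c_f) f$ satisfies $P_\gamma h = r(\gamma) h$ and $c_h = 0$. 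The decomposition then reduces to $(1-\alpha) U_\gamma h = r(\gamma) h$; combining $\|U_\gamma\|_{\L^a(\pi)} \le 1$ with $r(\gamma)/(1-\alpha) > 1$ forces $h = 0$, whence $g \in \C \cdot f$.

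For $(vi)$, I set $u := h/|h|$, so $|u| = 1$ modulo $\pi$. The equation $Pu = \lambda u$ in $\L^1(\pi)$ reads $\alpha \pi(u) + (1-\alpha) Uu = \lambda u$. Integrating against $\pi$ and using $\pi U = \pi$ yields $\pi(u) = \lambda \pi(u)$, so either $\lambda = 1$ or $\pi(u) = 0$. In the latter case $(1-\alpha) Uu = \lambda u$, so $|Uu| = |u|/(1-\alpha) = 1/(1-\alpha)$ $\pi$-a.s., which contradicts $|Uu| \le U|u| = 1$ since $\alpha > 0$. I expect the most delicate step to be $(ii)$: Jensen must be threaded through the probability kernel $U(x, \cdot)$ before H\"older can be applied, and the endpoint $\gamma = +\infty$ must be handled by dominated convergence with the limit $\mathbf 1_{\{\xi = 0\}}$.
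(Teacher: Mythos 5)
Your proof is correct and takes essentially the same approach as the paper, built on the same decomposition $P_\gamma = \alpha\,\pi(e^{-\gamma\xi}\cdot) + (1-\alpha)\,U_\gamma$. The only micro-level deviations are in $(v)$, where the paper integrates $r(\gamma)|h|\le (1-\alpha)U|h|$ against $\pi$ rather than invoking the operator-norm bound on $U_\gamma$, and in $(vi)$, where the paper takes moduli in the eigenvalue equation and uses convexity (strict equality in Jensen forces $k$ constant) rather than your clean case-split on $\pi(u)$; both of your alternatives are valid and of comparable difficulty.
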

\begin{proof} $\ $\\
{\it (i)} Observe that $P_\gamma=\alpha \pi(e^{-\gamma\xi}\cdot)+
(1-\alpha)U_\gamma$ with $U_\gamma:=U(e^{-\gamma\xi}\cdot)$.
Since the sum of a Fredholm operator with a compact operator is
Fredholm, we directly obtain
$r_{ess}(P_\gamma)=(1-\alpha)r_{ess}(U_\gamma)\le 1-\alpha$. \\[0.12cm]
{\it (ii)} For every $0\le\gamma,\gamma'<\infty$ and every $f\in\cB$ such that $\|f\|_a=1$, we have
\begin{eqnarray*}
\|P_\gamma f-P_{\gamma'}f\|_b&=& \|P((e^{-\gamma\xi}-e^{-\gamma'\xi})f)\|_b\\
&\le& \|(e^{-\gamma\xi}-e^{-\gamma'\xi})f\|_b\le  \|e^{-\gamma\xi}-e^{-\gamma'\xi}\|_c,
\end{eqnarray*}
where $c$ is such that $\frac 1a+\frac 1c=\frac 1b$.
Hence $\|P_\gamma-P_{\gamma'}\|_{\L^a(\pi),\L^b(\pi)}\le \|e^{-\gamma\xi}-e^{-\gamma'\xi}\|_c $, 
which converges to 0 as $\gamma'$ goes to $\gamma$, by the dominated convergence theorem. In the same way, we prove that $\|P_\gamma-P_{\infty}\|_{\L^a(\pi),\L^b(\pi)}\le \|e^{-\gamma\xi}\|_c $ and hence the continuity of $\gamma\mapsto P_\gamma$
at infinity. \\[0.12cm]
$(iii)$ For every $\gamma\in[0,+\infty]$ and every $f\in\L^a(\pi)$,
$\|P_\gamma f\|_a\le\|Pf\|_a\le (1-\alpha)\|f\|_a + \alpha\|f\|_1$
since $\|Uf\|_a\le\|f\|_a$.
This gives the Doeblin-Fortet inequality. \\[0.12cm]
$(iv)$ For any non-null non-negative $f\in\L^a(\pi)$, we have $P_\gamma f \geq \alpha\pi(e^{-\gamma\xi}f)1_{\X}>0$. The other assertion of (4) is then obvious. \\[0.12cm]
$(v)$ Let $f,g\in\L^a(\pi)$ such that $f>0$, 
$P_\gamma f=r(\gamma)f$ and $P_\gamma g=r(\gamma)g$  in $\L^a(\pi)$.
Set
$\beta := \frac{\pi(e^{-\gamma\xi}g)}{\pi(e^{-\gamma\xi}f)}$ and $h:=g- \beta\, f$.
Then $\pi(e^{-\gamma\xi} h)=0$ and $P_\gamma h = r(\gamma)\, h$, which gives $r(\gamma)\, h = (1-\alpha)\, U(e^{-\gamma\xi} h)$, so that $r(\gamma)\, |h| \leq (1-\alpha)\, U(|h|)$. Since $\pi\, U = \pi$, we obtain: $r(\gamma)\, \pi(|h|) \leq (1-\alpha)\, \pi(|h|)$. Finally we conclude that $\pi(|h|)=0$ because $r(\gamma) > 1-\alpha$ and so $g=\beta f$ in $\L^a(\pi)$. \\[0.12cm]
$(vi)$ 
Let $k\in\L^1(\pi)$ and $\lambda\in \C$ be such that $|\lambda|=1$, 
$|k|\equiv \mathbf 1_\X$ and $P(k)=\lambda k$. Then
$\lambda k=\alpha\pi(k)+(1-\alpha)U(k)$. Taking the modulus, we obtain
$1 \le \alpha|\pi(k)|+(1-\alpha)U(\mathbf 1_\X)\le 1$. By convexity we conclude that
$|\pi(k)|=1$ and that $k$ is constant modulo $\pi$, so that $\lambda=1$.
\end{proof}
\begin{proof}[Proof of the multiplicative ergodicity]
Let $b:=\frac{p}{p-1}$ and $a>b$. From Lemma~\ref{prop1}, $P_\gamma$ satisfies the assumptions of Theorem \ref{generalspectraltheorem1} 
with $\cB_0=\L^a(\pi)$ and $\cB_1=\L^{b}(\pi)$. Moreover $f\mapsto e^{-\gamma\xi}f$ is in $\mathcal L(\L^b(\pi))$. Thus $(S_n)_n$ is multiplicatively ergodic on $\{\gamma>0\, :\, r(\gamma)>1-\alpha\}$ with respect to $\mathbb P_\mu$, provided that $\mu$ defines a continuous linear form on $\L^{b}(\pi)$, that is when $\mu$ is 
absolutely continuous with respect to $\pi$ with density in $\L^{p}(\pi)$. 
\end{proof}
Below, in view of \eqref{P1}, we study the the spectral radius $r(\gamma)$ of $P_\gamma$. First observe that the  nonincreasingness of $r(\cdot)$ follows from Lemma~\ref{LEMME0} since $\L^a(\pi)$ is a Banach lattice. Consequently the set $J_0 := \{\gamma>0\, :\, r(\gamma)>1-\alpha\}$ is an interval with $\min J_0 = 0$ since $r(0)=1$. Next set $h_\gamma:= e^{-\gamma\xi}$ 
for $\gamma\ge 0$ and $ h_\infty:=\mathbf{1}_{\{\xi=0\}}$.
Recall that $P_\gamma f=\alpha\pi(f\, h_\gamma)+
(1-\alpha)U(f\, h_\gamma)$.
We set $\tilde U_\gamma(\cdot):=h_\gamma\, U(\cdot)$.
\begin{alem}\label{rayonspectralKnudsen}
Let $ \gamma\in[0,\infty]$ and $a\in(1,+\infty)$.
Let $\lambda$ be an eigenvalue of $(P_\gamma)_{|\mathbb L^a(\pi)}$
such that $\lambda>(1-\alpha)\rho(\tilde U_\gamma)$. Then
\begin{equation}\label{eqlambda}
\lambda=\alpha\sum_{n\ge 0}\frac{(1-\alpha)^n}{\lambda^n}\pi(\tilde U_\gamma^n(h_\gamma)).
\end{equation}
In particular if $r(\gamma)>1-\alpha$, then $\lambda=r(\gamma)$
satisfies \eqref{eqlambda}.
\end{alem}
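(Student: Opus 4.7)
The plan is to start from the eigenvalue equation $P_\gamma f = \lambda f$ for some non-zero $f \in \L^a(\pi)$. Using the decomposition $P_\gamma g = \alpha\,\pi(h_\gamma g)\,\mathbf{1}_{\X} + (1-\alpha)\,U(h_\gamma g)$ and setting $c := \pi(h_\gamma f)$, the equation reads
$$\lambda f = \alpha c\,\mathbf{1}_{\X} + (1-\alpha)\,U(h_\gamma f). \qquad (\star)$$

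The main obstacle is to rule out the degenerate case $c=0$. If $c=0$, then $(\star)$ becomes $\lambda f = (1-\alpha)\,(U\circ M_{h_\gamma})(f)$, where $M_{h_\gamma}$ denotes multiplication by $h_\gamma$. So $\lambda$ is a non-zero eigenvalue of $(1-\alpha)\,UM_{h_\gamma}$, and I would invoke the classical fact that the non-zero spectra of $AB$ and $BA$ coincide (here $A=U$, $B=M_{h_\gamma}$: concretely, $ABf=\lambda f$ with $\lambda\neq 0$ forces $h_\gamma f=Bf\neq 0$, and then $BA(h_\gamma f)=\lambda\,h_\gamma f$). This yields $\lambda\in(1-\alpha)\,\sigma(M_{h_\gamma}U)=(1-\alpha)\,\sigma(\tilde U_\gamma)$, hence $|\lambda|\le(1-\alpha)\,\rho(\tilde U_\gamma)$, contradicting the hypothesis. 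Thus $c\neq 0$.

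Once $c\neq 0$ is secured, the route is straightforward. Multiplying $(\star)$ by $h_\gamma/\lambda$ and rearranging gives
$$\Bigl(I-\frac{1-\alpha}{\lambda}\,\tilde U_\gamma\Bigr)(h_\gamma f)=\frac{\alpha c}{\lambda}\,h_\gamma.$$
Since $(1-\alpha)\,\rho(\tilde U_\gamma)/|\lambda|<1$, the Neumann series converges in $\mathcal{L}(\L^a(\pi))$, yielding
$$h_\gamma f=\frac{\alpha c}{\lambda}\,\sum_{n\ge 0}\Bigl(\frac{1-\alpha}{\lambda}\Bigr)^n\tilde U_\gamma^n(h_\gamma).$$
Applying the continuous linear form $\pi$ to both sides (valid since $\L^a(\pi)\hookrightarrow\L^1(\pi)$) and dividing by $c$ produces exactly \eqref{eqlambda}.

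For the final assertion, when $r(\gamma)>1-\alpha$, Lemma~\ref{prop1}(i) gives $r_{ess}(P_\gamma)\le 1-\alpha < r(\gamma)$, so $r(\gamma)$ is an isolated eigenvalue of $P_{\gamma|\L^a(\pi)}$. Moreover $\|\tilde U_\gamma f\|_a=\|h_\gamma\,Uf\|_a\le\|Uf\|_a\le\|f\|_a$ (since $0\le h_\gamma\le 1$ and $U$ is a Markov kernel preserving $\pi$), so $\rho(\tilde U_\gamma)\le 1$ and therefore $r(\gamma)>1-\alpha\ge(1-\alpha)\,\rho(\tilde U_\gamma)$. The first part of the lemma then applies to $\lambda=r(\gamma)$, concluding the proof.
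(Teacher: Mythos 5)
Your proof is correct and takes essentially the same route as the paper's: decompose $P_\gamma f=\lambda f$ using $P_\gamma g=\alpha\,\pi(h_\gamma g)+(1-\alpha)\,U(h_\gamma g)$, rule out $\pi(h_\gamma f)=0$ by showing this would make $\lambda/(1-\alpha)$ a nonzero eigenvalue of $\tilde U_\gamma$, then invert via Neumann series and apply $\pi$. Your explicit invocation of the $AB$/$BA$ nonzero-spectrum identity (with $A=U$, $B=M_{h_\gamma}$) is exactly what the paper does concretely when it multiplies the degenerate relation by $h_\gamma$, and your closing paragraph supplying the details behind the paper's tacit ``in particular'' (quasi-compactness so $r(\gamma)$ is an eigenvalue, and $\rho(\tilde U_\gamma)\le 1$ via Jensen) is a welcome but minor addition.
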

\begin{proof}
Let $\gamma\in[0,\infty]$. 
Let $\lambda\in\mathbb C$ and $f\in\mathbb L^a(\pi)$, $f\neq 0$, be such that 
$P_\gamma f=\lambda f$ in $\mathbb L^a(\pi)$, i.e.
$\lambda f=\alpha\pi(f\, h_\gamma)+(1-\alpha)U(f \, h_\gamma)$
that can be rewritten
$$\lambda f\, h_\gamma=\alpha\pi(f\, h_\gamma)\, h_\gamma+(1-\alpha)\tilde U_\gamma
     (f \, h_\gamma).$$
Observe that $\pi(f\, h_\gamma)\ne 0$. Indeed $\pi(f\, h_\gamma) = 0$ would imply
$\lambda f \, h_\gamma=(1-\alpha)\tilde U_\gamma(f \, h_\gamma)$, which contradicts the fact that
$\lambda/(1-\alpha)$ is not in the spectrum of $\tilde U_\gamma$.
Now setting $g:=f\, h_\gamma/\pi(f\, h_\gamma)$, we have
\begin{equation}\label{equationg}
\lambda g=\alpha \, h_\gamma+(1-\alpha)\tilde U_\gamma (g)
\end{equation}
and so
$$\left[id- \frac{1-\alpha}\lambda\tilde U_\gamma\right] (g)=\frac\alpha\lambda \, h_\gamma.$$
Hence
$$ g=\frac\alpha\lambda
  \left[id- \frac{1-\alpha}\lambda\tilde U_\gamma\right]^{-1} (h_\gamma)
   =\frac\alpha\lambda\sum_{n\ge 0}\frac{(1-\alpha)^n}{\lambda^n}
      \tilde U_\gamma^n h_\gamma$$
and so
$$\lambda=\lambda\pi(g)=\alpha\sum_{n\ge 0}\frac{(1-\alpha)^n}{\lambda^n}
      \pi\left(\tilde U_\gamma^n \, h_\gamma\right).$$
\end{proof}

Let $(Z_n)_n$ be a Markov process with transition $U$.
We observe that $\pi(\tilde U_\gamma^n(h_\gamma))=\mathbb E_\pi[e^{-\gamma \sum_{k=0}^nZ_k}]$ if $\gamma\in[0,\infty)$ and $
\pi(\tilde U_\infty^n(h_\infty))= P_\pi[\sum_{k=0}^nZ_k=0]$.
Hence \eqref{eqlambda} can be rewritten
\begin{equation}\label{eqlambdav2}
\lambda=\alpha g_{Z,\pi}(\gamma,(1-\alpha)/\lambda),
\end{equation}
where $ g_{Z,\pi}$ denotes the Laplace-generating function of $(Z_n)_n$ with respect to $\mathbb P_\pi$, i.e.
 $ g_{Z,\pi}(\gamma,x):=\sum_{n=0}^{+\infty}x^n \mathbb E_\pi[e^{-\gamma \sum_{k=0}^nZ_k}]$.
Now \eqref{P1} will follow from the following result, coming from
Corollary \ref{cor-th2}.
\begin{acor}\label{corknuds}
Assume $\alpha>1/2$ and $p>1$. Let $\mu\in\mathbb L^p(\pi)$. Then, with respect to $\mathbb P_\mu$, $\nu$ satisfies
$$\nu=\inf\{\gamma>0\ :\ 2\alpha g_{Z,\pi}(\gamma,2(1-\alpha))<1\}$$
and 
$$\eqref{P1}\quad\Leftrightarrow\quad 2\alpha \sum_{n\ge 0}(2(1-\alpha))^n \P_\pi\left(\sum_{k=0}^{n}Z_k=0\right)<1.$$
In particular \eqref{P1} holds if the random variables $Z_n$ are positive.
\end{acor}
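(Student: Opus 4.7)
The plan is to reduce the two displayed assertions about $\nu$ and \eqref{P1} to inequalities about the spectral radius $r(\gamma):=r(P_{\gamma|\L^a(\pi)})$ via Corollaries~\ref{cor-th2} and~\ref{cor-th1v1}(ii), and then to convert those into inequalities about $g_{Z,\pi}$ using the fixed-point identity \eqref{eqlambdav2} supplied by Lemma~\ref{rayonspectralKnudsen}. Since the assumption $\alpha>1/2$ makes $\delta_0=1-\alpha<1/2$, Corollary~\ref{cor-th2} (whose hypotheses were already verified in the proof of the multiplicative ergodicity, with $\cB_0=\L^a(\pi)$ and $\cB_1=\L^b(\pi)$ for $a>b=p/(p-1)$) yields $\nu=\inf\{\gamma>0:r(\gamma)<1/2\}$, while Corollary~\ref{cor-th1v1}(ii) yields $\nu<\infty\Leftrightarrow r(\infty)<1/2$.

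Set $\psi(\gamma):=2\alpha\, g_{Z,\pi}(\gamma,2(1-\alpha))$ for $\gamma\in[0,+\infty]$. This function is non-increasing in $\gamma$ and, by monotone convergence applied term by term to the series, continuous at $+\infty$ with
$$\psi(+\infty)=2\alpha\sum_{n\ge 0}(2(1-\alpha))^n\,\mathbb P_\pi\!\left(\sum_{k=0}^{n}Z_k=0\right).$$
The core step is the equivalence $r(\gamma)<1/2\Leftrightarrow\psi(\gamma)<1$. When $r(\gamma)\ge 1/2$ the hypothesis $\alpha>1/2$ gives $r(\gamma)>1-\alpha\ge(1-\alpha)\rho(\tilde U_\gamma)$ (using the contractivity of $U$ on $\L^a(\pi)$ to bound $\rho(\tilde U_\gamma)\le 1$), so Lemma~\ref{rayonspectralKnudsen} applies and
$$r(\gamma)=\alpha\, g_{Z,\pi}\bigl(\gamma,(1-\alpha)/r(\gamma)\bigr).$$
Comparing this identity with $\psi(\gamma)/2=\alpha\, g_{Z,\pi}(\gamma,2(1-\alpha))$ via the strict monotonicity of the power series $x\mapsto g_{Z,\pi}(\gamma,x)$ evaluated at $x=(1-\alpha)/r(\gamma)$ versus $x=2(1-\alpha)$ produces the equivalence on the whole range $r(\gamma)>1-\alpha$, with $r(\gamma)=1/2$ exactly when $\psi(\gamma)=1$, and the same chain of comparisons (at $\gamma=+\infty$) transferring to the $\nu<\infty$ statement.

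It remains to propagate the equivalence past the regime $r(\gamma)\le 1-\alpha$, where Lemma~\ref{rayonspectralKnudsen} is silent. For this I would use the continuity of $r$ on $J_0=\{\gamma:r(\gamma)>1-\alpha\}$ from Theorem~\ref{thmkellerliverani1} and its monotonicity (Lemma~\ref{LEMME0}): starting from $r(0)=1$, the function $r$ traverses $(1-\alpha,1]$ continuously, so whenever $r$ eventually falls below $1/2$ one can select $\gamma_0\in J_0$ with $r(\gamma_0)<1/2$, and then the monotonicity of $\psi$ gives $\psi(\gamma)\le\psi(\gamma_0)<1$ for every $\gamma\ge\gamma_0$. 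Combined with the previous paragraph this yields both $\nu=\inf\{\gamma>0:\psi(\gamma)<1\}$ and $\nu<\infty\Leftrightarrow\psi(+\infty)<1$, which are the displayed conclusions. The last sentence follows at once: if every $Z_n$ is positive, each probability $\mathbb P_\pi(\sum_{k=0}^n Z_k=0)$ vanishes, hence $\psi(+\infty)=0<1$ and \eqref{P1} holds. The main obstacle I anticipate is the bookkeeping around the threshold $1-\alpha$ and the silent use of $\rho(\tilde U_\gamma)\le 1$ to enter the domain of Lemma~\ref{rayonspectralKnudsen} at the crossing $r(\gamma)=1/2$; once these are in place, the rest is a direct application of the already-proved lemmas and corollaries.
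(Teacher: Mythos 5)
Your proposal follows essentially the same route as the paper: establish the equivalence $r(\gamma)<1/2 \Leftrightarrow 2\alpha\,g_{Z,\pi}(\gamma,2(1-\alpha))<1$ on $J_0$ from Lemma~\ref{rayonspectralKnudsen} and the monotonicity of $g_{Z,\pi}(\gamma,\cdot)$, propagate it outside $J_0$ using the monotonicity of $\psi$, and then convert into statements about $\nu$ and \eqref{P1} via Corollary~\ref{cor-th1v1}.

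There is, however, a misattribution that you should fix. You cite Corollary~\ref{cor-th2} to conclude $\nu=\inf\{\gamma>0 : r(\gamma)<1/2\}$, but Corollary~\ref{cor-th2} is premised on the hypotheses of Theorem~\ref{generalspectraltheorem2} (four nested spaces, $C^1$-dependence of $P_\gamma$, which in the Knudsen gas example forces the extra moment condition $\pi(\xi^\tau)<\infty$) — none of which is assumed in Corollary~\ref{corknuds}. The framework you describe yourself, namely $\cB_0=\L^a(\pi)\hookrightarrow\cB_1=\L^b(\pi)$, is exactly that of Theorem~\ref{generalspectraltheorem1}, and the corollary one must invoke is Corollary~\ref{cor-th1v1}(i) with $\alpha_0=2$, which applies because $\alpha>1/2$ gives $\alpha_0=2<1/\delta_0=1/(1-\alpha)$ and yields, for every $\gamma$, $g_{Y,\mu}(\gamma,2)<\infty\Leftrightarrow r(\gamma)<1/2$, hence $\nu=\inf\{\gamma>0 : r(\gamma)<1/2\}$. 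Note also that Corollary~\ref{cor-th1} would not suffice on its own, since it delivers \eqref{P1bis} only under the extra finiteness assumption $\inf r(J_0)<1/2$, whereas the displayed identity for $\nu$ must hold even when $\nu=\infty$.

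Apart from this citation slip, the argument is sound, and your monotonicity step for extending the equivalence past $J_0$ is in fact spelled out more carefully than the paper's treatment, which merely observes $\psi(\gamma)\le\psi(0)<\infty$ for $\gamma\notin J_0$ — an inequality that, taken alone, does not yield $\psi(\gamma)<1$. Your appeal to the Keller–Liverani continuity of $r$ (together with the $\limsup$ bound in Theorem~\ref{thmkellerliverani1}) to produce $\gamma_0\in J_0$ with $r(\gamma_0)<1/2$ is exactly the missing link, and the observation $\rho(\tilde U_\gamma)\le 1$ (from $U$ being a Markov kernel on $\L^a(\pi)$) is correctly used to enter the domain of Lemma~\ref{rayonspectralKnudsen}.
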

\begin{proof} 
Let $b:=\frac p{p-1}$ and $a>b$.
Let $r(\gamma)$ be the spectral radius of $(P_\gamma)_{\mathbb L^a(\pi)}$.
Let us prove that, for every $\gamma\in[0,+\infty]$, 
$r(\gamma)<1/2\ \Leftrightarrow\
   2\alpha g_{Z,\pi}(\gamma,2(1-\alpha))<1$.

We know that this holds true on $J_0$ due to 
Lemma \ref{rayonspectralKnudsen} and to \eqref{eqlambdav2} (since $g_{Z,\pi}(\gamma,\cdot)$ is increasing).
Now if $\gamma\in[0,+\infty]\setminus J_0$ then $r(\gamma)\le 1-\alpha<1/2$ 
and $2\alpha g_{Z,\pi}(\gamma,2(1-\alpha))\le  2\alpha g_{Z,\pi}(0,2(1-\alpha))<\infty$. We conclude due to Corollary \ref{cor-th1v1}.
\end{proof}
\begin{proof}[Proof of \eqref{P2}]
Assume that $\alpha>1/2$, that $2\alpha \sum_{n\ge 0}(2(1-\alpha))^n \P_\pi\left(\sum_{k=0}^{n}Z_k=0\right)<1$ and $\pi(\xi^\tau)<\infty$ for some $\tau>1$. Let $p>\frac{\tau}{\tau-1}$ and set $a_3:=\frac{p}{p-1}$ (ie.~$1/p+1/a_3 = 1$). Note that $a_3 < \tau$. Let $a_2$ be such that $a_3<a_2<\tau$. Since $\lim_{a\r+\infty} \frac{\tau a}{\tau+ a} = \tau$, we can chose $a_1>a_2$ such that 
$a_2 < \frac{\tau a_1}{\tau+ a_1}$. Next let $a_0 > a_1$. From Lemma~\ref{prop1} we deduce  that Theorem \ref{generalspectraltheorem2} applies with the spaces $\cB_i=\L^{a_i}(\pi)$ for $i=0,1,2,3$. We conclude that $r$ is $C^1$ on $[0,\theta_1)$ with $r'<0$, and so that \eqref{P2} also holds due to Remark \ref{multergodP1P2}, provided that the initial probability measure $\mu$ defines a continuous linear form on $\cB_3=\L^{a_3}(\pi)$, that is (equivalently) when $\mu=h.\pi$ with $h\in L^{p}(\pi)$. 
\end{proof}
%
%
%
%
%
%
%
%
%===================================
%===================================
\section{Linear autoregressive model: proof of Theorem \ref{thmAR}}\label{proofAR}
%===================================
%===================================
Assume that $\X:=\R$ and $(X_n)_{n\in\N}$ is the linear autoregressive model defined by 
\begin{equation} \label{def-AR}
n\in\N^*, \quad X_n = \alpha X_{n-1} + \vartheta_n\, 
\end{equation}
where $X_0$ is a real-valued random variable, $\alpha\in(-1,1)$, and $(\vartheta_n)_{n\ge 1}$ is a sequence of  i.i.d.~real-valued random variables, independent of $X_0$.  
Assume that $\vartheta_1$ has a positive Lebesgue probability density function on $\X$, say $p(\cdot)$, 
having a moment of order $r_0$ for some  $r_0\ge 1$, that is 
\begin{equation} \label{moment-AR}
\int |x|^{r_0} p(x) dx <\infty. 
\end{equation}
$(X_n)_{n\in\N}$ is a Markov chain with transition kernel 
$$P(x,A)=\int_{\R} \mathbf{1}_A(\alpha x + y) p(y)\, dy = \int_{\R} \mathbf{1}_A( y) p(y-\alpha x)\, dy.$$
Set $V(x) := (1+|x|)^{r_0}$, $x\in\R$. Recall that, under Assumption~(\ref{moment-AR}), $P$ satisfies the following drift condition (see \cite{MeynTweedie09}) 
\begin{equation} \label{inequality-drift}
\forall \delta > |\alpha|^{r_0},\ \exists L\equiv L(\delta) > 0,\quad PV \leq \delta \, V + L\, \mathbf{1}_\X. 
\end{equation}
Moreover it is well-known that $(X_n)_{n\in\N}$ is $V$-geometrically ergodic, see \cite{MeynTweedie09}. 
We also assume 
that, for all $x_0\in \R$, there exist a neighborhood $V_{x_0}$ of $x_0$ and a non-negative Lebesgue-integrable function $q_{x_0}(\cdot)$ such that 
\begin{equation} \label{dom-nu} 
\forall y\in\R,\ \forall v\in V_{x_0},\ p(y+v) \leq q_{x_0}(y). 
\end{equation}
Let $(\cB_V,\|\cdot\|_V)$ be the weighted-supremum Banach space 
\begin{equation} \label{def-BV}
\cB_V := \big\{ \ f : \X\r\C, \text{ measurable }: \|f\|_V  := \sup_{x\in\X} |f(x)| V(x)^{-1} < \infty\ \big\}.
\end{equation}
Let $(\cC_V,\|\cdot\|_V)$ denote the following subspace of $\cB_V$: 
$$\cC_V := \bigg\{ \ f\in\cB_V : \text{ $f$ is continuous and }\ \lim_{|x|\r\infty} \frac{f(x)}{V(x)}\ \text{exists in }\ \C\bigg\},$$
where the symbol $\lim_{|x|\r\infty}$ means that the limits when $x\r\pm\infty$ exist and are equal. 
Note that $V\in\cC_V$ and that $\cC_V$ is a closed subspace of $(\cB_V,\|\cdot\|_V)$.       For every $f\in\cC_V$ we define  
$$\ell_V(f) := \lim_{|x|\r\infty} \frac{f(x)}{V(x)}.$$ 
Let $\cC_{0,V} := \{f\in\cC_V : \ell_V(f)=0\}$. Finally we denote by $(\cC_b,\|\cdot\|_\infty)$ the space of bounded continuous complex-valued functions on $\R$ endowed 
with the supremum norm $\|\cdot\|_\infty$.

We will see below that, for every $\gamma\in[0,+\infty]$, $P_\gamma$ continuously acts on $\cC_V$ (see Lemma~\ref{C-V-0}). We denote by $r(\gamma)$ the spectral radius of $P_\gamma$ on $\cC_V$, that is:  
$$r(\gamma) \equiv r(P_\gamma) := \lim_n\|P_\gamma^n\|_V^{1/n} = \lim_n\|P_\gamma^n V\|_V^{1/n}$$
where $\|\cdot\|_V$ also denotes the operator norm on $\cC_V$. We have $r(0)=r(P)=1$ (see below). 

Recall that $\xi:\X\rightarrow [0,+\infty)$ is a measurable function and that $S_n=\sum_{k=0}^n \xi(X_k)$. Theorem~\ref{proprieteAR} below applied with $\mu=\delta_x$ or $\mu=\pi$ directly provides Theorem~\ref{thmAR}. 
\begin{atheo}\label{proprieteAR}
Assume that the previous assumptions hold.
Let $\mu$ be a probability distribution on $\mathbb R$ belonging to $\cC_V^{\, *}$, namely satisfying $\mu(V) < \infty$.
Assume moreover that $\xi$ is continuous, coercive, that $p$ is continuous, and that $\sup_{\mathbb R}\xi/V<\infty$. Then
\begin{enumerate}
\item $\rho_Y=r$ on $[0,+\infty)$ and
$(S_n)_n$ is multiplicatively ergodic on $[0,+\infty)$ with respect to $\P_\mu$. 
\item If moreover the Lebesgue measure of the set $[\xi=0]$ is zero,
then $\lim_{\gamma\rightarrow +\infty}r(\gamma)=0$. Hence \eqref{P1} holds true under $\mathbb P_\mu$.
\item Moreover, if  there exists $\tau>0$ such that $\sup_{\R}\xi^{1+\tau}/V<\infty$, then $\gamma\mapsto   r(\gamma)$ admits a negative derivative on $[0,+\infty)$. Hence \eqref{P2} holds also true  under $\mathbb P_\mu$.
\end{enumerate}
\end{atheo}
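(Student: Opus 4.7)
The plan is to deduce Theorem~\ref{proprieteAR} from Corollaries~\ref{cor-th1} and \ref{cor-th2}, applied to the nested family of Banach spaces $\cB_a:=\cC_{V^a}$, $a\in(0,1]$. Since $V\ge 1$, the inequality $V^a\le V^b$ for $a\le b$ yields continuous embeddings $\cB_a\hookrightarrow\cB_b$, and the assumption $\mu(V)<\infty$ immediately gives $\mu\in\cB_a^*$ for every such $a$. The analytic backbone I would exploit is the observation that the coercivity of $\xi$ combined with $\sup_{\R}\xi/V<\infty$ forces $V^a e^{-\gamma\xi}$ to be continuous and to vanish at infinity, hence to be bounded on $\R$, for every $\gamma>0$; because $p$ is a probability density this yields
\[
P_\gamma V^a(x)\;=\;\int V(y)^a\, e^{-\gamma\xi(y)}\, p(y-\alpha x)\,dy\;\le\;\|V^a e^{-\gamma\xi}\|_\infty.
\]

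\textbf{Multiplicative ergodicity (part~(1)).} Iterating the above bound gives $P_\gamma^n V^a\le\|V^a e^{-\gamma\xi}\|_\infty$ uniformly in $n\ge 1$, which I would rewrite as a Doeblin--Fortet inequality $\|P_\gamma^n f\|_{V^{a_0}}\le c_0 M^n\|f\|_{V^{a_1}}$ with $\delta_0=0$ admissible and with $c_0,M$ uniformly bounded on every compact $K\subset(0,+\infty)$; in particular $r_{ess}(P_{\gamma|\cB_{a_0}})=0$. The continuity of $\gamma\mapsto P_\gamma$ from $\cB_{a_0}$ to $\cB_{a_1}$ on any compact $[\gamma_-,\gamma_+]\subset(0,+\infty)$ follows from the elementary estimate $|e^{-\gamma\xi}-e^{-\gamma'\xi}|\le|\gamma-\gamma'|\,\xi e^{-\gamma_-\xi}\le|\gamma-\gamma'|/(\gamma_- e)$ together with $\sup_x V^{-a_1}(x)PV^{a_0}(x)<\infty$ (a consequence of drift~\eqref{inequality-drift}). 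The positivity of $p$ forces $P_\gamma\phi>0$ everywhere for any non-null non-negative $\phi\in\cB_{a_0}$, and the aperiodic irreducibility and $V$-geometric ergodicity of $(X_n)_n$ then supply the simplicity and peripheral-spectrum uniqueness conditions of Hypothesis~\ref{hypcompl}. Corollary~\ref{cor-th1}, applied on every compact subinterval of $(0,+\infty)$, then yields multiplicative ergodicity on all of $(0,+\infty)$ and $\rho_Y=r$ independent of the initial distribution.

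\textbf{Limit of the spectral radius (part~(2)).} Set $g_\gamma^{(n)}(x):=V(x)^{-a}\, P_\gamma^n V^a(x)$. This function is non-negative, continuous, non-increasing in $\gamma$, and for $\gamma\ge\gamma_0>0$ is dominated by $V(x)^{-a}\|V^a e^{-\gamma_0\xi}\|_\infty$, which vanishes at infinity uniformly in $\gamma\ge\gamma_0$. From the probabilistic representation
\[
P_\gamma^n V^a(x)\;=\;\E_x\!\bigl[V^a(X_n)\, e^{-\gamma(\xi(X_1)+\cdots+\xi(X_n))}\bigr]
\]
together with $\P_x\bigl(\xi(X_1)+\cdots+\xi(X_n)=0\bigr)=0$ (because the joint law of $(X_1,\ldots,X_n)$ given $X_0=x$ is absolutely continuous with strictly positive density while $\Leb(\xi=0)=0$), dominated convergence yields $g_\gamma^{(n)}(x)\downarrow 0$ pointwise as $\gamma\to+\infty$. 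Combining the uniform tail control outside a large compact in $x$ with Dini's theorem inside it upgrades this monotone pointwise convergence to $\|P_\gamma^n\|_{V^a}\le\sup_x g_\gamma^{(n)}(x)\to 0$. Since $r(\gamma)\le\|P_\gamma^n\|_{V^a}^{1/n}$ for every $n\ge 1$, this forces $r(\gamma)\to 0$, and \eqref{P1} follows via Remark~\ref{multergodP1P2}. This balancing act between the uniform tail decay (from the coercivity of $\xi$) and the Dini compact is the step I expect to be the main obstacle.

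\textbf{Differentiability (part~(3)).} Under $\sup_{\R}\xi^{1+\tau}/V<\infty$ one has $\xi\le C\, V^{1/(1+\tau)}$. I would choose $0<a_0<a_1<a_2<a_3\le 1$ with $a_2-a_1>1/(1+\tau)$, which is possible for every $\tau>0$; then $\|\xi f\|_{V^{a_2}}\le C\|f\|_{V^{a_1}}$, so $f\mapsto\xi f$ lies in $\cL(\cB_{a_1},\cB_{a_2})$. Using the elementary inequality $\xi^2 e^{-\tilde\gamma\xi}\le 4/(\tilde\gamma^2 e^2)$ for $\tilde\gamma>0$, a second-order Taylor expansion of $\gamma\mapsto e^{-\gamma\xi}$ shows that $\gamma\mapsto P_\gamma$ is $C^1$ from $(0,+\infty)$ into $\cL(\cB_{a_1},\cB_{a_2})$ with derivative $P'_\gamma f=-P_\gamma(\xi f)$. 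The remaining hypotheses of Theorem~\ref{generalspectraltheorem2} being verified as in part~(1) applied to the four-space chain $\cB_{a_0}\hookrightarrow\cB_{a_1}\hookrightarrow\cB_{a_2}\hookrightarrow\cB_{a_3}$, we obtain $r\in C^1((0,+\infty))$ with $r'<0$, and \eqref{P2} follows from Corollary~\ref{cor-th2} combined with Remark~\ref{multergodP1P2}.
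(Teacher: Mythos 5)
Your argument rests on the claim that $V^ae^{-\gamma\xi}$ vanishes at infinity, hence is bounded, for every $\gamma>0$. This is false under the stated hypotheses: coercivity gives $\xi(x)\to+\infty$ with no prescribed rate, and $\sup_{\R}\xi/V<\infty$ is an \emph{upper} bound on $\xi$, not a lower one. Take $\xi(x)=\log(1+|x|)$ (continuous, coercive, $\xi/V$ bounded, $\xi=0$ only at $x=0$). Then $V(x)^ae^{-\gamma\xi(x)}=(1+|x|)^{ar_0-\gamma}$ is unbounded for every $\gamma<ar_0$, so $\|V^ae^{-\gamma\xi}\|_\infty=\infty$. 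This invalidates the uniform bound $P_\gamma^nV^a\le\|V^ae^{-\gamma\xi}\|_\infty$, the resulting ``Doeblin--Fortet with $\delta_0=0$'', and the tail control in your part~(2). The paper avoids this by splitting the integrand on the level set $[\xi\le\beta]$ (Lemma~\ref{lem-D-F-P-gamma}): coercivity only guarantees $\sup_{[\xi\le\beta]}V<\infty$, which yields the weaker but correct drift $P_\gamma V\le e^{-\gamma\beta}\delta\,V+L_\beta\mathbf 1_\X$ with a nontrivial $V$-term that can only be made small, never suppressed. For part~(2) the paper's route is also different and much shorter: $\Leb(\xi=0)=0\Rightarrow P_\infty=0\Rightarrow r(\infty)=0$, and then the Keller--Liverani upper semi-continuity of the spectral radius (Theorem~\ref{thmkellerliverani1}, with $\delta_0$ arbitrarily small because $r_{ess}\equiv 0$ on $(0,+\infty]$) gives $\lim_{\gamma\to+\infty}r(\gamma)=0$.

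There is a second, independent gap. Even with a correct Doeblin--Fortet inequality, ``in particular $r_{ess}(P_{\gamma|\cC_{V^{a_0}}})=0$'' does not follow: a Lasota--Yorke/Doeblin--Fortet bound alone does not yield quasi-compactness without some compactness input, and the embedding $\cC_{V^{a_0}}\hookrightarrow\cC_{V^{a_1}}$ is not compact. The paper instead works in the dual setting, verifying Hypothesis~\ref{hypKL}* rather than Hypothesis~\ref{hypKL}: it shows $P_\gamma$ is compact from $\cC_b$ into $\cC_V$ (Ascoli, using continuity of $p$; Lemma~\ref{C-V-0}), hence $P_\gamma^*$ is compact from $\cC_V^*$ into $\cC_b^*$, and the drift rewrites as the dual Doeblin--Fortet inequality \eqref{DFgamma>0}; Hennion's criterion \cite{Hen93} then gives $r_{ess}(P_\gamma|_{\cC_V})=0$. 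Your proposal applies the unstarred hypothesis on the chain $\cC_{V^{a_0}}\hookrightarrow\cC_{V^{a_1}}$ without supplying the compactness that would close this step. The choices of exponents in your part~(3) and the continuity and $C^1$ estimates for $\gamma\mapsto P_\gamma$ are in line with the paper (Lemma~\ref{lem-cont-P-gamma}, Remark~\ref{cor-QC-gammabis}, \cite[Lemma~10.4]{HerPen10}), but they sit on the same missing foundation.
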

\subsection{Quasi-compactness of $P_\gamma$}
We start this section with the following useful lemma.
\begin{alem} \label{C-V-0}
Assume that Assumption~(\ref{moment-AR}) holds, that $p$ is continuous, that $\xi$ is continuous and coercive. Then, for every $\gamma\in[0,+\infty)$, $P_\gamma$ continuously acts on $\mathcal C_V$. Moreover, for every $\gamma\in(0,+\infty)$, we have $P_\gamma(\cC_V) \subset \cC_{0,V}$ and $P_\gamma$ is compact from $\cC_b$ into $\cC_V$. 
\end{alem}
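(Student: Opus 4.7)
The statement splits into three claims: (a) $P_\gamma$ is a bounded operator on $\cC_V$ for every $\gamma\in[0,+\infty)$; (b) $P_\gamma(\cC_V)\subset\cC_{0,V}$ for $\gamma>0$; (c) $P_\gamma$ is compact as an operator from $\cC_b$ into $\cC_V$ for $\gamma>0$. I would treat them in this order.

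For the operator-norm bound in (a), I start from $e^{-\gamma\xi}\leq 1$, which gives $P_\gamma V\le PV$, and the drift inequality (\ref{inequality-drift}) then produces $P_\gamma V\leq \delta V+L\,\mathbf{1}_\X\le(\delta+L)V$ (using $V\ge 1$), so that $\|P_\gamma f\|_V\le(\delta+L)\|f\|_V$. To see that $P_\gamma f$ is actually continuous, I fix $x_0$, apply the substitution $z=y-\alpha x$ in the integral, and rewrite
\[(P_\gamma f)(x)=\int_\R e^{-\gamma\xi(z+\alpha x)}\,f(z+\alpha x)\,p(z)\,dz.\]
For $x$ in a bounded neighbourhood of $x_0$ the integrand is dominated by $\|f\|_V\,C_{x_0}(1+|z|)^{r_0}p(z)$, which is integrable by (\ref{moment-AR}); continuity of $\xi$ and $f$ combined with dominated convergence then yield $(P_\gamma f)(x)\to(P_\gamma f)(x_0)$.

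For the limits at infinity needed in (a) and (b), I rewrite
\[\frac{(P_\gamma f)(x)}{V(x)}=\int_\R e^{-\gamma\xi(\alpha x+z)}\,\frac{f(\alpha x+z)}{V(\alpha x+z)}\cdot\frac{V(\alpha x+z)}{V(x)}\,p(z)\,dz.\]
Here $V(\alpha x+z)/V(x)\to|\alpha|^{r_0}$ pointwise in $z$ as $|x|\to\infty$, and the whole integrand is uniformly dominated by $\|f\|_V\,C(1+|z|)^{r_0}p(z)$. When $\gamma=0$ and $f\in\cC_V$, dominated convergence gives the common limit $|\alpha|^{r_0}\ell_V(f)$ at $\pm\infty$, so $Pf\in\cC_V$; when $\gamma>0$, coerciveness of $\xi$ forces $e^{-\gamma\xi(\alpha x+z)}\to 0$ pointwise in $z$ (the case $\alpha=0$ is trivial since $P_\gamma f$ is then a constant, hence in $\cC_{0,V}$), and the integral tends to $0$, settling (b).

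For (c), I show that the image of the closed unit ball of $\cC_b$ is relatively compact in $\cC_V$. Uniform boundedness follows from $\|P_\gamma f\|_\infty\le 1$, and equicontinuity from
\[|(P_\gamma f)(x)-(P_\gamma f)(x')|\le\|p(\cdot-\alpha x)-p(\cdot-\alpha x')\|_{L^1(\R)},\]
together with $L^1$-continuity of translations. Arzel\`a--Ascoli then extracts a subsequence $P_\gamma f_{n_k}$ converging locally uniformly to some $g$. The main obstacle is to upgrade this local uniform convergence to convergence in the $\|\cdot\|_V$ norm, and I would resolve it with the uniform tail bound
\[\frac{|(P_\gamma f_n)(x)|}{V(x)}\le \frac{(P_\gamma \mathbf{1}_\X)(x)}{V(x)},\]
whose right-hand side tends to $0$ as $|x|\to\infty$ by assertion (b) applied to $\mathbf 1_\X\in\cC_V$. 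Splitting the supremum in $\|\cdot\|_V$ into $\{|x|\le R\}$ (handled by local uniform convergence and $V\ge 1$) and $\{|x|>R\}$ (handled by the uniform tail bound, applied both to $P_\gamma f_{n_k}$ and, by passing to the limit, to $g$), I obtain $\|P_\gamma f_{n_k}-g\|_V\to 0$, which proves compactness.
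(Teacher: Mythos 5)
Your proof is correct and follows the paper's overall route: the drift inequality $P_\gamma V\le PV\le(\delta+L)V$ for boundedness, the change of variables plus dominated convergence for continuity, and coercivity of $\xi$ to force the limit $0$ at $\pm\infty$ when $\gamma>0$. You are a bit more explicit than the paper about why $P(\cC_V)\subset\cC_V$ when $\gamma=0$ (the paper moves from continuity of $Pf/V$ directly to $Pf\in\cC_V$ without writing out the limit $|\alpha|^{r_0}\ell_V(f)$); spelling out $V(\alpha x+z)/V(x)\to|\alpha|^{r_0}$, with the degenerate case $\alpha=0$ treated separately, is a welcome addition. The main divergence is in the compactness step. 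The paper factors $P_\gamma=P\circ M_\gamma$, shows that $P$ alone is compact from $\cC_b$ into $\cC_V$ by deducing equicontinuity from Scheff\'e's lemma (here is where continuity of $p$ is used), invokes Ascoli together with $V\to\infty$, and then composes with the bounded multiplication $M_\gamma$. You instead prove compactness of $P_\gamma$ directly, get equicontinuity from the $L^1$-continuity of translations (which does not require $p$ continuous, so is marginally more robust), and upgrade local uniform convergence to $\|\cdot\|_V$-convergence via the tail bound $|(P_\gamma f_n)(x)|/V(x)\le (P_\gamma\mathbf 1_\X)(x)/V(x)\to0$; the paper implicitly relies on the even simpler bound $1/V(x)$, valid since $P f_n$ is bounded by $1$, which you could have used as well. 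All three variants are sound under the stated hypotheses.
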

\begin{proof}{}
Let $\gamma\in[0,+\infty)$. From (\ref{inequality-drift}) it easily follows that $P_\gamma V \leq PV \leq (\delta+L)V$, so that $P_\gamma$ continuously acts on $\cB_V$. Now let $f\in\cC_V$. Then  
$$\forall x\in\R,\quad \frac{(P_\gamma f)(x)}{V(x)} = \int_\R \chi(x,y)\, dy\qquad \text{with } \ \chi(x,y) := e^{-\gamma \xi(\alpha x+y)}\, \frac{f(\alpha x+y)}{V(x)}\, p(y).$$
We have for every $(x,y)\in\R^2$ 
$$|\chi(x,y)| \, \leq\,  \|f\|_V \left(\frac{1+|x|+|y|}{1+|x|}\right)^{r_0} p(y) \, \leq\,  \|f\|_V \big(1+|y|\big)^{r_0} p(y)$$
Since $\chi(\cdot,y)$ is continuous for every $y\in\R$, we deduce from (\ref{moment-AR}) and Lebesgue's theorem that the function $P_\gamma f/V$ is continuous on $\R$, thus so is $P_\gamma f$. This proves that $P_\gamma(\cC_V) \subset \cC_V$, thus $P_\gamma$ continuously acts on $\cC_V$. 

Now let us consider $\gamma\in(0,+\infty)$. Since 
$$|\chi(x,y)|\leq \|f\|_V\, e^{-\gamma \xi(\alpha x+y)}\, \big(1+|y|\big)^{r_0} p(y)$$ 
and $\lim_{|x|\r +\infty} e^{-\gamma \xi(\alpha x+y)} = 0$, it follows again from Lebesgue's theorem that 
$$\lim_{|x|\r +\infty} \frac{(P_\gamma f)(x)}{V(x)} = 0,$$
thus $P_\gamma f \in \cC_{0,V}$. 

To prove the last assertion, observe that, since $p$ is continuous, the image by $P$ of 
the unit ball $\{f\in\cC_b : \|f\|_\infty \leq 1\}$ in $\cC_b$ is equicontinuous from Scheff\'e's lemma. Then $P$ 
is compact from $\cC_b$ into $\cC_V$ from Ascoli's theorem and from $\lim_{|x|\r\infty} V(x) = +\infty$. Next, for every $\gamma>0$, we have $P_\gamma = P\circ M_\gamma$ with $M_\gamma f = e^{-\gamma\xi} f$. Thus  $P_\gamma$ is compact from $\cC_b$ into $\cC_V$ since $M_\gamma$ is a bounded linear operator on $\cC_b$ and $P$ is compact from $\cC_b$ into $\cC_V$. 
\end{proof}
%
%

%================================
%================================
%\subsection{Quasi-compactness of $P_\gamma$}
%================================
%================================

Here we use the duality arguments of \cite[prop.~5.4]{HerLed14} to prove the quasi-compactness of $P_\gamma$ on $\mathcal C_V$. The topological dual spaces of $\cC_V$ and $\cC_b$ are denoted by $(\cC_V^*,\|\cdot\|_V)$ and $(\cC_b^*,\|\cdot\|_\infty)$ respectively (for the sake of simplicity we use the same notation for the dual norms). For any $\gamma>0$, we denote by $P^*_\gamma$ the adjoint operator of $P_\gamma$ on $\cC_V$.  Note that each $P_\gamma^*$ is a contraction with respect to the dual norm $\|\cdot\|_\infty$  because so is $P_\gamma$ on $\cC_b$.

In the sequel, $\delta > |\alpha|^{r_0}$ is fixed, as well as the associated constant $L\equiv L(\delta)$ in (\ref{inequality-drift}). 
Iterating Inequality (\ref{inequality-drift}) proves that $P$ is power-bounded on $\cC_V$ (i.e.~$\sup_{n\geq 1}\|P^nV\|_V < \infty$), thus $r(0)=r(P)=1$ since $P$ is Markov. Moreover (\ref{inequality-drift}) rewrites as the following (dual) Doeblin-Fortet inequality (see the proof in \cite[p.~190]{FerHerLed13}):
\begin{equation}\label{DFgamma=0} 
\forall f^*\in\cC_V^*,\quad \|P^* f^*\|_V \leq \delta  \, \|f^*\|_V + L\, \|f^*\|_\infty.
\end{equation}
Since $P$ is compact from $\cC_b$ into $\cC_V$ (Lemma~\ref{C-V-0}), so is $P^*$ from $\cC_V^*$ into $\cC_b^*$. Then we deduce from \cite{Hen93} that, under Assumption~(\ref{moment-AR}), $P$ is a quasi-compact operator on $\cC_V$ and its essential spectral radius  $r_{ess}(P)$ satisfies the following bound (see also \cite[Sect.~8]{Wu04}): 
\begin{equation} \label{r-ess-P}
r_{ess}(P) \leq \delta
\end{equation}
The next lemma extends Inequality~(\ref{inequality-drift}) to the operators $P_\gamma$. 
\begin{alem} \label{lem-D-F-P-gamma}
Assume that Assumption~(\ref{moment-AR}) holds true and that $\xi$ is coercive. 
Then, for every $\gamma >0$ and for every $\beta>0$, there exists a positive constant $L_{\beta}$ such that 
\begin{equation} \label{drift-P-gamma}
P_\gamma V \leq e^{-\gamma\beta}\, \delta \, V + L_{\beta}\, \mathbf{1}_\X 
\end{equation}
Moreover
\begin{equation} \label{drift-P-gamma-infty}
P_{\infty} V \leq \left(\sup_{[\xi =0]} V\right) \mathbf{1}_\X.
\end{equation}
\end{alem}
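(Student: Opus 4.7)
The plan is to establish both inequalities by decomposing $P_\gamma V(x) = \int e^{-\gamma\xi(y)}\, V(y)\, P(x,dy)$ into two pieces depending on the size of $\xi(y)$, and then invoking the drift condition (\ref{inequality-drift}) already known for $P$. First I fix $\gamma>0$ and $\beta>0$. Because $\xi$ is coercive, the sublevel set $K_\beta := \{y\in\R : \xi(y) < \beta\}$ is bounded, so there exists $R_\beta > 0$ with $K_\beta \subset [-R_\beta, R_\beta]$, and hence $\sup_{K_\beta} V \leq (1+R_\beta)^{r_0} =: C_\beta$.

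On the complementary set $\{\xi \geq \beta\}$ we use the pointwise bound $e^{-\gamma\xi(y)} \leq e^{-\gamma\beta}$, which yields
\begin{equation*}
\int_{\{\xi(y)\ge \beta\}} e^{-\gamma\xi(y)}\, V(y)\, P(x,dy) \;\leq\; e^{-\gamma\beta}\, (PV)(x) \;\leq\; e^{-\gamma\beta}\bigl(\delta\, V(x) + L\bigr),
\end{equation*}
where the last step uses (\ref{inequality-drift}). On the bounded set $K_\beta$, using $V \leq C_\beta$ and the fact that $P(x,\cdot)$ is a probability measure,
\begin{equation*}
\int_{\{\xi(y)<\beta\}} e^{-\gamma\xi(y)}\, V(y)\, P(x,dy) \;\leq\; C_\beta \int_{K_\beta} P(x,dy) \;\leq\; C_\beta.
\end{equation*}
Summing these two estimates gives $P_\gamma V(x) \leq e^{-\gamma\beta}\delta\, V(x) + \bigl(e^{-\gamma\beta}L + C_\beta\bigr)$, so (\ref{drift-P-gamma}) holds with $L_\beta := L + C_\beta = L + (1+R_\beta)^{r_0}$, which is independent of $\gamma$.

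For (\ref{drift-P-gamma-infty}) the argument is immediate: since $P_\infty(x,dy) = \mathbf{1}_{[\xi=0]}(y)\, P(x,dy)$,
\begin{equation*}
(P_\infty V)(x) \;=\; \int_{[\xi=0]} V(y)\, P(x,dy) \;\leq\; \Bigl(\sup_{[\xi=0]} V\Bigr)\, P(x,\X) \;=\; \sup_{[\xi=0]} V.
\end{equation*}
No real obstacle is expected here; the only point requiring care is the appeal to coercivity to ensure $K_\beta$ is bounded, which is exactly what decouples the dependence of the constant from $x$.
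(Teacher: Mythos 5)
Your proof is correct and follows essentially the same route as the paper: split $P_\gamma V$ over $\{\xi \geq \beta\}$ and $\{\xi < \beta\}$, use $e^{-\gamma\xi}\le e^{-\gamma\beta}$ together with the drift inequality (\ref{inequality-drift}) on the first piece, use coercivity to bound $V$ on the second, and for $P_\infty$ bound $V$ by its supremum over $[\xi=0]$. The only cosmetic difference is that you make the constant $\sup_{[\xi\le\beta]}V$ explicit as $(1+R_\beta)^{r_0}$.
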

\begin{proof}{}
We have for every $\gamma >0$ and for every $\beta>0$ 
\begin{eqnarray*}
P_\gamma V &=& P(e^{-\gamma\xi} V) = P\big(e^{-\gamma\xi} \mathbf{1}_{[\xi > \beta]}V\big) + P\big(e^{-\gamma\xi} \mathbf{1}_{[\xi \leq \beta]}V\big)  \\ 
&\leq& e^{-\gamma\beta} \big(\delta \, V + L\, \mathbf{1}_\X\big) + \int_{[\xi \leq \beta]} V(y) P(\cdot,dy) \qquad \qquad \text{(from (\ref{inequality-drift}))}  \\ 
&\leq& e^{-\gamma\beta}\, \delta \, V + \big(L + \sup_{[\xi \leq \beta]} V\big)\mathbf{1}_\X 
\end{eqnarray*}
from which we deduce the first desired statement. 
For $P_{\infty}$, we have
$$
P_{\infty} V=P(\mathbf 1_{\{\xi=0\}}V) \leq 
\left(\sup_{[\xi =0]} V\right) P(\mathbf 1_\X)=\left(\sup_{[\xi =0]} V\right)\mathbf 1_\X.
$$
\end{proof}

\begin{acor} \label{cor-QC-gamma} 
Assume that Assumption~(\ref{moment-AR}) holds true and that $\xi$ is coercive. Then, for every $\gamma\in(0,+\infty]$, $P_\gamma$ is a quasi-compact operator on $\cC_V$ and its essential spectral radius $r_{ess}(P_\gamma)$ is zero. 
\end{acor}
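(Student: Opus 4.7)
The plan is to adapt to $P_\gamma$ the duality argument that was used to prove~(\ref{r-ess-P}) for $P$: all the required ingredients are in place, namely the compactness of $P_\gamma$ from $\cC_b$ into $\cC_V$ (Lemma~\ref{C-V-0}) together with the drift-type inequalities (\ref{drift-P-gamma}) and~(\ref{drift-P-gamma-infty}) (Lemma~\ref{lem-D-F-P-gamma}).

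For $\gamma\in(0,+\infty)$, I would fix an arbitrary $\beta>0$ and dualize~(\ref{drift-P-gamma}) exactly as (\ref{inequality-drift}) was dualized into~(\ref{DFgamma=0}) (see \cite[p.~190]{FerHerLed13}) in order to obtain the Doeblin--Fortet-type inequality on $\cC_V^{*}$:
$$\forall f^{*}\in\cC_V^{*},\qquad \|P_\gamma^{*}\, f^{*}\|_V\;\le\; e^{-\gamma\beta}\,\delta\,\|f^{*}\|_V + L_\beta\,\|f^{*}\|_\infty.$$
Since $P_\gamma$ is compact from $\cC_b$ into $\cC_V$, its adjoint $P_\gamma^{*}$ is compact from $\cC_V^{*}$ into $\cC_b^{*}$, and Hennion's theorem~\cite{Hen93} applied to $P_\gamma^{*}$ on $(\cC_V^{*},\|\cdot\|_V)$ with weaker norm $\|\cdot\|_\infty$ then yields the quasi-compactness of $P_\gamma^{*}$, hence of $P_\gamma$, together with the bound $r_{ess}(P_\gamma)=r_{ess}(P_\gamma^{*})\le e^{-\gamma\beta}\,\delta$. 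Since $\gamma>0$, letting $\beta\to+\infty$ forces $r_{ess}(P_\gamma)=0$.

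For $\gamma=+\infty$ I would first verify that $P_\infty$ still acts on $\cC_V$: by coercivity of $\xi$ the level set $K:=[\xi=0]$ is compact, so the integrand $f(y)\,\mathbf{1}_K(y)\,p(y-\alpha x)$ is dominated by $(\sup_K V)\,\|f\|_V\,p(y-\alpha x)$ uniformly in $x$; Lebesgue's theorem combined with the continuity of $p$ then shows that $P_\infty f$ is continuous and bounded, so that $P_\infty(\cC_V)\subset\cC_b\subset\cC_V$. The same duality route, this time starting from~(\ref{drift-P-gamma-infty}) (which has coefficient $0$ in front of $V$), produces the dual inequality $\|P_\infty^{*}f^{*}\|_V\le (\sup_K V)\,\|f^{*}\|_\infty$ and thus directly $r_{ess}(P_\infty)=0$. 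Alternatively, Scheff\'e's lemma applied to the continuous kernel $p(\,\cdot-\alpha x)$ on the compact set $K$ makes the family $\{P_\infty f:\,\|f\|_V\le 1\}$ equicontinuous, so that Ascoli's theorem together with $\lim_{|x|\r\infty}V(x)=+\infty$ already gives the compactness of $P_\infty$ itself on $\cC_V$.

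I do not anticipate any genuine obstacle: the reasoning is a direct perturbation of the $\gamma=0$ case treated before~(\ref{r-ess-P}). The only mildly delicate step is the dualization of the drift inequality, which is however the standard manipulation of \cite[p.~190]{FerHerLed13}. The truly new feature, responsible for the strict improvement from $r_{ess}(P)\le\delta$ down to $r_{ess}(P_\gamma)=0$, is that for $\gamma>0$ the coefficient $e^{-\gamma\beta}\delta$ appearing in~(\ref{drift-P-gamma}) can be driven to $0$ by taking $\beta$ arbitrarily large, a freedom that was absent in the $\gamma=0$ situation.
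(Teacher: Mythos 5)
Your proof is correct and takes essentially the same route as the paper: dualize the drift inequality of Lemma~\ref{lem-D-F-P-gamma} into a Doeblin--Fortet inequality on $\cC_V^*$ (following \cite[p.~190]{FerHerLed13}), combine it with the compactness of $P_\gamma^*$ from $\cC_V^*$ into $\cC_b^*$ supplied by Lemma~\ref{C-V-0}, and invoke Hennion's theorem \cite{Hen93}; the only cosmetic difference is that you keep $\beta$ free and let $\beta\to+\infty$, whereas the paper fixes $\varepsilon>0$ first and then chooses $\beta$ so that $e^{-\gamma\beta}\delta<\varepsilon$, which is an interchangeable bookkeeping choice.

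You are, if anything, more careful than the paper about $\gamma=+\infty$. The paper's proof opens with ``Consider any $\gamma>0$'' and never explicitly revisits the endpoint, even though the statement covers $(0,+\infty]$ and Lemma~\ref{C-V-0} asserts the compactness $\cC_b\to\cC_V$ only for finite $\gamma$ (for $\gamma=\infty$ the factorization $P_\gamma=P\circ M_\gamma$ with $M_\gamma\in\cL(\cC_b)$ breaks down, since $\mathbf{1}_{[\xi=0]}f$ need not be continuous). Your separate verification that $P_\infty$ acts on $\cC_V$, together with the degenerate drift~(\ref{drift-P-gamma-infty}) (coefficient $0$ on $V$) or your alternative direct compactness argument via Ascoli on the compact level set $K=[\xi=0]$, fills that small gap; just note that the domination in your first continuity check is local in $x$ (on a compact neighbourhood of a given $x_0$, where $p$ is bounded on the resulting compact set of arguments) rather than globally uniform, which is all that Lebesgue's theorem requires, and that compactness of $K$ tacitly uses the continuity of $\xi$, a hypothesis present in Lemma~\ref{C-V-0} though omitted from the corollary's own wording.
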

\begin{proof}{}
Consider any $\gamma>0$. For any $\varepsilon > 0$, choose $\beta = \beta(\gamma,\varepsilon)>0$ such that $e^{-\gamma\beta}\, \delta < \varepsilon$. Then we deduce from Lemma~\ref{lem-D-F-P-gamma} that $P_\gamma V \leq \varepsilon\, V + D\, \mathbf{1}_\X$, where $D\equiv D(L,\gamma,\varepsilon)$ is a positive constant. This inequality rewrites as (see the proof in \cite[p.~190]{FerHerLed13}): 
\begin{equation}\label{DFgamma>0}
\forall f^*\in\cC^*_V,\quad \|P^*_\gamma f^*\|_V \leq \varepsilon \, \|f^*\|_V + D\, \|f^*\|_\infty.
\end{equation}
Since $P^*_\gamma$ is compact from $\cC^*_V$ into $\cC^*_b$ (Lemma~\ref{C-V-0}), we deduce from \cite{Hen93} that $P_\gamma$ is quasi-compact on $\cC_V$ with $r_{ess}(P_\gamma) \leq \varepsilon$. We obtain $r_{ess}(P_\gamma) = 0$ because $\varepsilon$ is arbitrary.  
\end{proof}
With the usual convention $V^0:=1$, we have the identification $\cC_{V^0}=\cC_b$.
\begin{arem}\label{cor-QC-gammabis}
Let $\varepsilon>0$ , $0 \le a\le a+b\le 1$.
Observe that Corollary \ref{cor-QC-gamma} holds also if we replace $V$ by $V^{a+b}$ (since $\vartheta_1$ admits a moment of order $r_0(a+b)$). Moreover notice that 
\eqref{DFgamma>0}  with $V^{a+b}$
instead of $V$ directly gives
\begin{equation}\label{DFgamma>0bis}
\exists D_{\varepsilon,a+b}>0,\quad\forall f^*\in\cC^*_{V^{a+b}},\quad \|P^*_\gamma f^*\|_{V^{a+b}} \leq \varepsilon \, \|f^*\|_{V^{a+b}} + D_{\varepsilon,a+b}\, \|f^*\|_{V^a}
\end{equation}
since $\Vert f^*\Vert_\infty\le\Vert f^*\Vert_{V^a}$.
\end{arem}

%================================
%================================
\subsection{Continuity of the function $\gamma\mapsto r(\gamma)$}
%================================
%================================

\begin{apro} \label{pro-cont-r-gamma}
Assume that Assumption~(\ref{moment-AR}) holds true, that $\xi$ is coercive  and finally that the function $\xi/V$ is bounded on $\R$. Then the function $\gamma\mapsto r(\gamma)$ is continuous on $[0,+\infty]$.
\end{apro}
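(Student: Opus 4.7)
The plan is to apply the Keller--Liverani theorem (Theorem~\ref{thmkellerliverani1}) via Hypothesis~\ref{hypKL}* to the family $(P_\gamma)_\gamma$ on the pair $\cB_0 := \cC_{V^a} \hookrightarrow \cB_1 := \cC_V$ for a fixed $a \in (0,1)$. Since $V \geq V^a$ the embedding holds; condition (D-F-dual) is supplied directly by Remark~\ref{cor-QC-gammabis} taken with $a+b=1$, which produces a uniform iterated Doeblin-Fortet inequality for $P_\gamma^*$ on $\cC_V^*$ relative to $\cC_{V^a}^*$; and the Banach-lattice structure of $\cC_V$ gives the nonincreasingness of $r$ via Lemma~\ref{LEMME0}. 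The crux of the work is the verification of the continuity of $\gamma \mapsto P_\gamma$ from $[0,+\infty]$ into $\cL(\cC_{V^a}, \cC_V)$.

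At $\gamma_0 < +\infty$, the elementary inequality $|e^{-\gamma\xi}-e^{-\gamma_0\xi}| \leq |\gamma-\gamma_0|^{1-a}\xi^{1-a}$ together with the standing assumptions $\xi \leq c\, V$ and $PV \leq \delta V + L$ yields H\"older continuity $\|P_\gamma - P_{\gamma_0}\|_{\cC_{V^a}, \cC_V} \leq C\, |\gamma-\gamma_0|^{1-a}$. Continuity at $\gamma_0 = +\infty$ is the main technical step. Writing $(P_\gamma - P_\infty)f(x) = \int_{\{\xi > 0\}} e^{-\gamma\xi(y)} f(y)\, p(y-\alpha x)\, dy$, I estimate $\phi_\gamma(x) := P\bigl(\mathbf{1}_{\{\xi>0\}} e^{-\gamma\xi} V^a\bigr)(x)/V(x)$ by splitting the integrand over $\{0 < \xi \leq \beta\} \cup \{\xi > \beta\}$. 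The $\{\xi > \beta\}$-piece is at most $e^{-\gamma\beta}\bigl(\delta\, V^{a-1}(x) + L_a/V(x)\bigr)$, vanishing as $\gamma \to \infty$ on any bounded set of $x$. On $\{0 < \xi \leq \beta\}$ the coercivity of $\xi$ bounds $V^a$ by a constant depending only on $\beta$; for $|x| \leq R$ assumption~\eqref{dom-nu}, applied after covering the compact $-\alpha[-R,R]$ by finitely many neighborhoods, yields a single integrable $q_R$ dominating $p(\cdot - \alpha x)$, so dominated convergence gives uniform decay in $x$. For $|x| > R$ the crude bound $PV^a/V \leq \delta V^{a-1} + L_a/V$ provides smallness uniformly in $\gamma$. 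Choosing $R$ first, then $\beta$, then $\gamma$ large, one concludes $\|P_\gamma - P_\infty\|_{\cC_{V^a}, \cC_V} \to 0$.

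Because the bound $r_{ess}(P_{0\, |\cC_V}) \leq \delta$ forces $\delta_0 \geq |\alpha|^{r_0}$ whenever $J \ni 0$, I apply Theorem~\ref{thmkellerliverani1} on two subintervals. On $[0,\gamma_1]$ for any $\gamma_1 > 0$, Hypothesis~\ref{hypKL}* holds with $\delta_0 = \delta \in (|\alpha|^{r_0}, 1)$, and since $r(0) = 1 > \delta$ the theorem gives continuity of $r$ in a neighbourhood of $0$. On $[\gamma_1, +\infty]$, Lemma~\ref{lem-D-F-P-gamma} gives $P_\gamma V \leq e^{-\gamma\beta}\delta V + L_\beta$ uniformly in $\gamma \geq \gamma_1$, so choosing $\beta$ large renders $\delta_0$ arbitrarily small, and Theorem~\ref{thmkellerliverani1} yields continuity of $r$ on $\{\gamma \in [\gamma_1, +\infty] : r(\gamma) > 0\}$; letting $\gamma_1 \to 0^+$ covers $(0, +\infty]$ on $\{r>0\}$. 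Finally, at any $\gamma_0$ where $r(\gamma_0) = 0$, the nonincreasingness of $r$ (Lemma~\ref{LEMME0}) trivialises right-continuity, while the limsup bound in Theorem~\ref{thmkellerliverani1}, applied with $\delta_0 \to 0$, forces left-continuity. The principal obstacle is the continuity of the perturbation at $+\infty$: it closes only because all three ingredients cooperate, namely coercivity of $\xi$, the local domination of $p$ in~\eqref{dom-nu}, and the strict inequality $a < 1$.
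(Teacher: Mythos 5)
Your proof is correct and follows essentially the same route as the paper: verify a Doeblin--Fortet inequality on the dual (Hypothesis~\ref{hypKL}*) plus continuity of $\gamma\mapsto P_\gamma$ in a weaker operator norm, then invoke Theorem~\ref{thmkellerliverani1}, handling $\gamma=0$ with $\delta_0>|\alpha|^{r_0}$ and $\gamma>0$ with $\delta_0$ arbitrarily small.

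The one cosmetic deviation is the choice of auxiliary space: the paper uses the pair $(\cC_b,\cC_V)$ (i.e.\ $a=0$ in Lemma~\ref{lem-cont-P-gamma}), whereas you pick $\cC_{V^a}$ for a fixed $a\in(0,1)$. Both are valid, but $\cC_b$ is slightly cleaner at the $\gamma_0=+\infty$ step: the paper's Lemma~\ref{prop-1-AR} disposes of the large-$|x|$ regime immediately from $|P_\gamma f|\le P\mathbf 1_\X=\mathbf 1_\X$, whereas with $\cC_{V^a}$ you need (as you correctly supply) the extra observation $PV^a/V\le \delta_a V^{a-1}+L_a/V\to 0$ as $|x|\to\infty$, relying on $a<1$. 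Your explicit treatment of the degenerate case $r(\gamma_0)=0$ (right-continuity from monotonicity, left-continuity from the $\limsup$ bound with $\delta_0\downarrow 0$) makes rigorous a point the paper leaves implicit, which is a nice addition; otherwise the two proofs are the same argument.
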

The continuity of $\gamma\mapsto r(\gamma)$ at some $\gamma_0\in[0,+\infty]$ directly follows from the continuity of $\gamma\mapsto P_\gamma$ from $[0,+\infty]$ to $\cL(\cC_b,\cC_V)$ due to Theorem \ref{thmkellerliverani1} (applied with any $\delta_0\in(0,1)$ if $\gamma_0\ne 0$ and with any $\delta_0>|\alpha|^{r_0}$ if $\gamma_0=0$), and due to \eqref{DFgamma=0}, to \eqref{DFgamma>0} and to Corollary \ref{cor-QC-gamma}. Hence Proposition \ref{pro-cont-r-gamma} comes from the following lemmas.
\begin{alem} \label{lem-cont-P-gamma}
Let $0\leq a < a+b \leq 1$. 
Assume that $\xi \leq cV$ for some positive constant $c$. Then the following operator-norm inequality holds for every $(\gamma,\gamma')\in[0,+\infty)^2$
$$
\|P_\gamma - P_{\gamma'}\|_{{\cC}_{V^a},{\cC}_{V^{a+b}}}\ := \sup_{f\in{\cC}_{V^a},\|f\|_{V^{a}}\leq 1}\|P_\gamma f - P_{\gamma'} f\|_{V^{a+b}} \ \ \leq \ (c|\gamma - \gamma'|)^b\Vert P\Vert_{V^{a+b}} \, .$$
\end{alem}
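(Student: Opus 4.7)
The plan is to reduce the $\cC_{V^a}\to\cC_{V^{a+b}}$ operator norm of $P_\gamma-P_{\gamma'}$ to a pointwise estimate on the multiplier $e^{-\gamma\xi}-e^{-\gamma'\xi}$, since
$$(P_\gamma-P_{\gamma'})f \;=\; P\bigl((e^{-\gamma\xi}-e^{-\gamma'\xi})\,f\bigr).$$
Here I will use that $P$ acts positively and that $P$ is a bounded operator on $\cC_{V^{a+b}}$ (which follows from Corollary~\ref{cor-QC-gamma} applied with $V$ replaced by $V^{a+b}$, noting $r_0(a+b)\le r_0$).

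The key pointwise estimate is an interpolation of two trivial bounds on $\R^+$. Since $u\mapsto e^{-u}$ is $1$-Lipschitz and takes values in $(0,1]$, we have at every point $y$
$$0\;\le\;|e^{-\gamma\xi(y)}-e^{-\gamma'\xi(y)}|\;\le\;\min\bigl(1,\,|\gamma-\gamma'|\,\xi(y)\bigr).$$
Since $b\in(0,1]$, this minimum is at most $\bigl(|\gamma-\gamma'|\,\xi(y)\bigr)^b$: indeed, if $|\gamma-\gamma'|\xi(y)\le 1$ then $|\gamma-\gamma'|\xi(y)\le \bigl(|\gamma-\gamma'|\xi(y)\bigr)^b$, and otherwise $1\le\bigl(|\gamma-\gamma'|\xi(y)\bigr)^b$. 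Combining this with the assumption $\xi\le cV$ gives the clean bound
$$\bigl|e^{-\gamma\xi(y)}-e^{-\gamma'\xi(y)}\bigr|\;\le\;\bigl(c\,|\gamma-\gamma'|\bigr)^b\,V(y)^b.$$

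Now for any $f\in\cC_{V^a}$ with $\|f\|_{V^a}\le 1$ we have $|f|\le V^a$ pointwise, hence
$$\bigl|(P_\gamma-P_{\gamma'})f\bigr|\;\le\;P\bigl(|e^{-\gamma\xi}-e^{-\gamma'\xi}|\,|f|\bigr)\;\le\;\bigl(c\,|\gamma-\gamma'|\bigr)^b\,P(V^{a+b}).$$
Since $\|V^{a+b}\|_{V^{a+b}}=1$, the definition of the operator norm on $\cC_{V^{a+b}}$ gives $P(V^{a+b})\le \|P\|_{V^{a+b}}\,V^{a+b}$ pointwise. Dividing by $V^{a+b}$ and taking the supremum yields
$$\|(P_\gamma-P_{\gamma'})f\|_{V^{a+b}}\;\le\;\bigl(c\,|\gamma-\gamma'|\bigr)^b\,\|P\|_{V^{a+b}},$$
which, after taking the supremum over $f$ in the unit ball of $\cC_{V^a}$, is the claimed inequality.

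No step is really difficult; the only point requiring a little care is the interpolation inequality $\min(1,u)\le u^b$ for $u\ge0$ and $b\in(0,1]$, which is what lets a merely $1$-Lipschitz estimate deliver H\"older continuity of exponent $b$ when composed with an unbounded function $\xi$. Everything else is the standard trick of exchanging $P$ and absolute value via positivity, together with the elementary bound $P(V^{a+b})\le\|P\|_{V^{a+b}}V^{a+b}$.
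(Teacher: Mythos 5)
Your proof is correct and follows essentially the same route as the paper: the paper also reduces to the pointwise bound $|e^{-u}-e^{-v}|\le|e^{-u}-e^{-v}|^b\le|u-v|^b$ (equivalent to your $\min(1,u)\le u^b$ interpolation), plugs in $\xi\le cV$, and then bounds $PV^{a+b}$ by $\|P\|_{V^{a+b}}V^{a+b}$.
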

\begin{proof}{}
Let $(\gamma,\gamma')\in[0,+\infty)^2$. For all $(u,v)\in[0,+\infty)^2$, we have $|e^{-u} - e^{-v}| \leq |e^{-u} - e^{-v}|^b \leq |u-v|^b$ from Taylor's inequality. Thus we obtain for any $f\in\cC_{V^a}$ 
\begin{eqnarray*}
\big|(P_\gamma f)(x) - (P_{\gamma'} f)(x)\big| &\leq&\Vert f\Vert_{V^a}  \int_\R \big|e^{-\gamma\xi(y)} - e^{-\gamma'\xi(y)}\big| (V(y))^a p(y-\alpha x)\, dy \\ 
&\leq& \Vert f\Vert_{V^a} (c\, |\gamma - \gamma'|)^b \int_\R (V(y))^{a+b}\,  p(y-\alpha x)\, dy \\ 
&\leq& \Vert f\Vert_{V^a} (c\, |\gamma - \gamma'|)^b  P V^{a+b}(x),
\end{eqnarray*}
from which we deduce the desired inequality. 
\end{proof}
\begin{alem} \label{prop-1-AR} 
Assume that Assumptions~(\ref{moment-AR}) and  (\ref{dom-nu}) hold true, that $\xi$ is coercive. Then 
$$\|P_\gamma- P_{\infty}\|_{{\cC}_b,{\cC}_V} := \sup_{f\in{\cC}_b,\|f\|_\infty\leq 1}\|P_\gamma f- P_{\infty}f\|_V \ \longrightarrow 0 
\quad \text{when}\  \gamma\r+\infty.$$ 
\end{alem}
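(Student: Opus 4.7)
\emph{Overall plan.} The plan is to extract, for every $f\in\cC_b$ with $\|f\|_\infty\leq 1$, an $f$-free pointwise majorant of $|P_\gamma f-P_\infty f|$ and then show that this majorant, divided by $V$, tends to $0$ uniformly. Since $e^{-\gamma\xi(y)}-\mathbf{1}_{\{\xi=0\}}(y)$ vanishes on $\{\xi=0\}$ and equals $e^{-\gamma\xi(y)}$ on $\{\xi>0\}$, one obtains
$$|P_\gamma f(x)-P_\infty f(x)|\leq I(\gamma,x):=\int_{\{\xi>0\}}e^{-\gamma\xi(y)}\,p(y-\alpha x)\,dy,$$
and because $V\geq 1$ it suffices to prove $\sup_{x\in\R}I(\gamma,x)\to 0$ as $\gamma\to+\infty$.

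\emph{Three-piece decomposition.} I would fix $\epsilon>0$ and, after the change of variable $z=y-\alpha x$, first discard a tail in $z$: since $p$ is Lebesgue-integrable there exists $T>0$ with $\int_{|z|>T}p(z)\,dz<\epsilon$, which bounds the contribution of $|z|>T$ to $I(\gamma,x)$ by $\epsilon$ uniformly in $(\gamma,x)$. It then remains to handle
$$J(\gamma,x):=\int_{|y-\alpha x|\leq T,\,\xi(y)>0}e^{-\gamma\xi(y)}\,p(y-\alpha x)\,dy,$$
which I would split according to whether $|x|$ is large or bounded (the case $\alpha=0$ is trivial, since $P_\gamma f$ is then constant in $x$, so I assume $\alpha\neq 0$).

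\emph{Large $|x|$ via coercivity; bounded $|x|$ via (\ref{dom-nu}).} I would pick $M>0$ large and $R_0$ with $\xi(y)\geq M$ for $|y|\geq R_0$ (coercivity), then set $R_1:=(R_0+T)/|\alpha|$. For $|x|\geq R_1$ and $|y-\alpha x|\leq T$ one has $|y|\geq|\alpha||x|-T\geq R_0$, hence $\xi(y)\geq M$ and $J(\gamma,x)\leq e^{-\gamma M}<\epsilon$ for $\gamma$ large. For $|x|\leq R_1$, the compact set $-\alpha[-R_1,R_1]$ is covered by finitely many neighborhoods $V_{x_i}$ provided by assumption~(\ref{dom-nu}); setting $Q(y):=\max_i q_{x_i}(y)$ yields an integrable function with $p(y-\alpha x)\leq Q(y)$ for all $|x|\leq R_1$ and all $y\in\R$. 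With $R_2:=|\alpha|R_1+T$ one gets $J(\gamma,x)\leq\int_{|y|\leq R_2,\,\xi(y)>0}e^{-\gamma\xi(y)}Q(y)\,dy$, and dominated convergence (the integrand is dominated by $Q\,\mathbf{1}_{|y|\leq R_2}\in L^1$ and tends to $0$ pointwise on $\{\xi>0\}$) forces the right-hand side below $\epsilon$ for $\gamma$ large, uniformly in $|x|\leq R_1$.

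\emph{Main obstacle.} The only delicate point is the uniformity in $x$: coercivity alone handles large $|x|$, but a bounded range of $x$ demands an $x$-independent integrable dominant for $p(\cdot-\alpha x)$, which is exactly what hypothesis~(\ref{dom-nu}) supplies through the finite-covering trick. Putting the three pieces together gives $\sup_{x}I(\gamma,x)\leq 3\epsilon$ for all sufficiently large $\gamma$, and letting $\epsilon\to 0$ concludes.
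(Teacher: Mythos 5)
Your proof is correct. The overall skeleton is the same as the paper's — split the range of $x$ into a bounded piece and an unbounded piece, and on the bounded piece use Assumption~(\ref{dom-nu}) together with a finite covering of the compact set of translates $-\alpha x$ to produce a single integrable dominant. The genuine difference is in how the unbounded piece is dispatched. The paper never reduces to the weight-free quantity $I(\gamma,x)$: it keeps the division by $V$, invokes the crude uniform bound $|P_\gamma f|\le P\mathbf 1_\X=\mathbf 1_\X$, and simply uses $V(x)\to\infty$ to make $|P_\gamma f(x)|/V(x)$ and $|P_\infty f(x)|/V(x)$ both small for $|x|>A$, uniformly in $\gamma$; this makes no use of the coercivity of $\xi$ and needs no tail truncation in the integration variable. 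You instead throw away the weight via $V\ge1$ and then recover control at large $|x|$ from coercivity of $\xi$ combined with the $|z|>T$ tail cut; this works but requires an extra $\varepsilon/3$ piece and an extra hypothesis that the paper's route does not actually exploit here. On the bounded-$|x|$ piece the two arguments are essentially the same: your direct appeal to dominated convergence is exactly what the paper's two-term split (tails $[\xi>\beta]$ giving $e^{-\gamma\beta}$, and $\int_{[0<\xi\le\beta]}q\to0$ as $\beta\to0$) proves by hand. One minor stylistic point: your final bound is $2\varepsilon$, not $3\varepsilon$, since the tail contribution and the $J$ contribution are the only two pieces at each fixed $x$; this is harmless.
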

\begin{proof}{}
Let $\varepsilon >0$. Let $f\in\cC_b$ be such that $\|f\|_\infty \leq 1$. From $|P_\gamma f| \leq P\mathbf{1}_\X = \mathbf{1}_\X$ it follows that there exists $A\equiv A(\varepsilon)$ such that : 
\begin{equation} \label{inequality-1}
|x| > A\ \Rightarrow\ \forall \gamma\in(0,+\infty),\ \frac{|(P_\gamma f)(x)|}{V(x)} \leq \varepsilon.
\end{equation} 
Next we deduce from Assumption~(\ref{dom-nu}) and a usual compactness argument ($[-A,A]$ is compact) that there exists a Lebesgue-integrable function $q\equiv q_A$ such that 
$$\forall v\in [-A,A],\ \forall y\in\R,\quad p(y+v) \leq q(y).$$
Consequently we obtain for any $\beta >0$ and $x\in\R$ such that $|x|\leq A$  
\begin{eqnarray*}
\big|(P_\gamma f-P_{\infty} f)(x)\big| &\le& e^{-\gamma\beta} \int_{[\xi > \beta]} p(y-\alpha x)\, dy + \int_{[0<\xi \leq \beta]} p(y-\alpha x)\, dy  \\ 
&\leq& e^{-\gamma\beta}  + \int_{[0<\xi \leq \beta]} q(y)\, dy.
\end{eqnarray*}
Since $\int_{[0<\xi \leq \beta]} q(y)\, dy \r 0$ when $\beta\r 0$, there exists $\beta_0\equiv \beta_0(\varepsilon)>0$ such that 
$$\int_{[0<\xi \leq \beta_0]} q(y)\, dy \leq \frac{\varepsilon}{2}.$$
Finally let $\gamma_0 \equiv \gamma_0(\varepsilon)>0$ be such that : $\forall \gamma > \gamma_0,\ e^{-\gamma\beta_0} \leq \varepsilon/2$. Then  
\begin{equation} \label{inequality-2}
|x| \leq A\ \Rightarrow\ \forall \gamma\in(\gamma_0,\infty),\ \frac{|(P_\gamma f
        - P_{\infty} f)(x)|}{V(x)} \leq  |(P_\gamma f)(x) - (P_{\infty}f)(x)| \leq  \varepsilon.
\end{equation}
Inequalities (\ref{inequality-1}) and (\ref{inequality-2}) provides the desired statement. 
\end{proof}
%

%================================
%================================

\subsection{Proof of the two first points of Theorem \ref{proprieteAR}}
%================================
%================================
%

Let $\theta_1:=\sup\{\gamma>0\, :\, r(\gamma)>0\}$.
Since $r$ is continuous at $0$ and $r(0)=1$, we observe that $\theta_1>0$.
Let us prove that the assumptions of Theorem \ref{generalspectraltheorem1}
hold true on $J=(0,\theta_1)$ with $\cB_0 :=\mathcal \cC_{V^a}$ for some (any) $a\in(0,1)$ and $\cB_1=\cC_V$. Note that $\mathbf 1_{\X}\in\cB_0$.
The fact that $(P_\gamma)_\gamma$ satisfies the conditions of 
Hypothesis~\ref{hypKL}*  on $J$ with $\cB_0 = \cC_b$ and $\cB_1 = \cC_V$  comes from \eqref{DFgamma>0} and from Lemmas \ref{lem-cont-P-gamma} and \ref{prop-1-AR}. Moreover we prove below that Hypothesis \ref{hypcompl} holds with respect to $(J,\cB_1)=(J,\mathcal\cC_{V})$. Since $f\mapsto e^{-\gamma\xi}f$ is in $\cL(\cC_V)$, we then deduce from Corollary~\ref{cor-th1} that $(S_n)_n$ is multiplicatively ergodic 
on $(0,\theta_1)$ and so $\rho_Y(\gamma)=r(\gamma)>0$
on $(0,\theta_1)$. Moreover, since $\theta_1>0$, it follows from Lemma \ref{rnonnul} 
 that $\theta_1=+\infty$. We have proved Assertion~$(1)$ of 
Theorem \ref{proprieteAR}.
For Assertion~$(2)$, observe that $\Leb(\xi=0)=0$
implies that $P_{\infty}=0$, in particular we have $r(+\infty)=0$. Then Theorem~\ref{thmkellerliverani1} gives $\lim_{\gamma\rightarrow +\infty}r(\gamma)=r(+\infty)=0$. 
Consequently $\nu$ is finite and satisfies \eqref{P1bis}, and so \eqref{P1}, with respect to $\P_\mu$, provided that $\mu$ is a probability distribution $\mu$ belonging to $\mathcal C_V^*$.

Recall that the previous proof shows that $r(\gamma)>0$ for every $\gamma \geq 0$. It remains to establish that Hypothesis \ref{hypcompl} holds with respect to $(J,\cB_1)=(J,\mathcal\cC_{V})$. This is provided by Remark~\ref{rqe-dec-fl} and Lemmas~\ref{lem-val-prop-simple}-\ref{lem-val-prop-simple2} below. 
\begin{alem} \label{dec-fl} 
For any non-null $e^*\in \cC_V^*$, $e^*\geq 0$, there exists a nonnegative measure $\mu\equiv \mu_{e^*}$ on $(\R,\cX)$ such that 
\begin{equation} \label{dec-fl-exp}
\forall f\in\cC_V,\quad e^*(f) = \mu\left(\frac{f}{V} - \ell_V(f)\, \mathbf{1}_\R\right) + e^*(V)\, \ell_V(f).
\end{equation}
\end{alem}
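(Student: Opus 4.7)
The idea is to identify $\cC_V$ isometrically with $C(\widehat{\R})$, where $\widehat{\R}:=\R\cup\{\infty\}$ denotes the one-point compactification of $\R$ (recall that, in the definition of $\ell_V$, the limits at $+\infty$ and $-\infty$ are required to exist and to coincide, so that $f/V$ does extend continuously to $\widehat{\R}$), and then to apply the Riesz--Markov--Kakutani representation theorem.

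First, I would verify that the linear map
\[
\Phi:\cC_V\longrightarrow C(\widehat{\R}),\qquad (\Phi f)(x):=\begin{cases}f(x)/V(x) & \text{if }x\in\R,\\ \ell_V(f) & \text{if }x=\infty\end{cases}
\]
is an isometric isomorphism. Linearity is trivial, the identity $\|\Phi f\|_\infty=\|f\|_V$ follows from the definition of $\|\cdot\|_V$ together with $|\ell_V(f)|\le\|f\|_V$, and surjectivity is obtained by noting that for every $g\in C(\widehat{\R})$ the function $f(x):=g(x)V(x)$ belongs to $\cC_V$ with $\Phi f=g$ (in particular $\ell_V(f)=g(\infty)$).

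Next, the functional $\tilde{e}^{*}:=e^{*}\circ\Phi^{-1}$ is a continuous nonnegative linear form on $C(\widehat{\R})$. Since $\widehat{\R}$ is a compact Hausdorff space, the Riesz--Markov--Kakutani theorem yields a unique finite nonnegative Borel measure $\tilde\mu$ on $\widehat{\R}$ such that $\tilde{e}^{*}(g)=\int_{\widehat{\R}}g\,d\tilde\mu$ for all $g\in C(\widehat{\R})$. Setting $\mu:=\tilde\mu_{|\R}$ and $m:=\tilde\mu(\{\infty\})\ge 0$, I then obtain for every $f\in\cC_V$
\[
e^{*}(f)=\int_{\R}\frac{f(x)}{V(x)}\,d\mu(x)+m\,\ell_V(f).
\]
Specializing this identity to $f=V$ (note that $V\in\cC_V$ with $\Phi V\equiv\mathbf{1}_{\widehat{\R}}$ and $\ell_V(V)=1$) gives $e^{*}(V)=\mu(\mathbf{1}_\R)+m$, whence $m=e^{*}(V)-\mu(\mathbf{1}_\R)$. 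Substituting this value of $m$ in the previous display yields exactly the announced decomposition \eqref{dec-fl-exp}.

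No step is genuinely difficult here; the only point that requires a moment of care is the identification of $\cC_V$ with $C(\widehat{\R})$ rather than with $C(\overline{\R})$ (two-point compactification), and this is precisely why the definition of $\cC_V$ requires the limits at $\pm\infty$ to agree. The nonnegativity of $\mu$ is automatic from the nonnegativity of $\tilde\mu$.
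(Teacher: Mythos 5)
Your proof is correct and follows essentially the same route as the paper: both pass through the substitution $g = f/V$ and then invoke the Riesz representation theorem. The only presentational difference is that you apply Riesz--Markov--Kakutani on $C(\widehat{\R})$ (one-point compactification, yielding a measure $\tilde\mu$ with a possible atom at $\infty$ that you then eliminate by testing against $f=V$), whereas the paper restricts the transported functional $\widetilde e^*(g):=e^*(gV)$ to the subspace $\cC_0$ of functions vanishing at infinity, applies the Riesz theorem there, and recovers the general $g\in\cC$ by writing $g=(g-\ell(g)\mathbf{1}_\R)+\ell(g)\mathbf{1}_\R$; the two routes are equivalent and of comparable length.
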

\begin{arem}\label{rqe-dec-fl}
Due to Lemma~\ref{dec-fl}, the first condition of Hypothesis~\ref{hypcompl} is fulfilled with $J=[0,+\infty)$ and $\cB=\cC_V$. Indeed, let $\gamma\in [0,+\infty)$ and let $\phi\in\cC_V$ be non-null and non-negative. Then, we have $P_\gamma \phi > 0$ everywhere from the definition of $P$ and the strict positivity of the function $p(\cdot)$. Moreover, if $\psi\in\cB^*\cap\ker(P_\gamma^*-r(\gamma)I)$ is non-null and non-negative, then we have $\psi(P_\gamma\phi)>0$. Indeed this property holds for $\gamma=0$ since we know that $\psi = c\, \pi$ for some $c>0$ and that $P_\gamma \phi > 0$ everywhere. Now let $\gamma>0$. First observe that $\psi\neq c\, \ell_V$ for every $c\in\C$ because $r(\gamma) > 0$ and $P_\gamma^*(\ell_V) = 0$ from Lemma~\ref{C-V-0}. Second note that $\mu=0$ in (\ref{dec-fl-exp}) implies that $e^*=e^*(V)\, \ell_V$. Thus the nonnegative measure $\mu\equiv \mu_{\psi}$ associated with $\psi$ in (\ref{dec-fl-exp}) is non-null. Since $\ell_V(P_\gamma\phi)=0$ from Lemma~\ref{C-V-0}, we deduce from (\ref{dec-fl-exp}) (applied with $e^*=\psi$) and from $P_\gamma \phi > 0$ that $\psi(P_\gamma\phi) = \mu(P_\gamma\phi/V)>0$.
\end{arem}
\begin{proof}[Proof of Lemma~\ref{dec-fl}]
Let $(\cC,\|\cdot\|)$ denote the following space 
$$\cC := \bigg\{ \ g : \R\r\C\  \text{ continuous }: \|g\| := \sup_{x\in\R} |g(x)| < \infty\ \text{ and }\ \lim_{|x|\r\infty} g(x)\ \text{exists in}\ \C\bigg\}.$$ 
For every $g\in\cC$, we set: $\ell(g) := \lim_{|x|\r\infty} g(x)$. We denote by $\cC^*$ the topological dual space of $\cC$. Let $e^*\in \cC_V^*$, $e^*\geq 0$, and let $\widetilde e^*\in\cC^*$ be defined by: 
$$\forall g\in\cC,\quad \widetilde e^*(g) := e^*(gV).$$ 
Next let $\widetilde e^*_0$ be the restriction of $\widetilde e^*$ to $\cC_0:=\{g\in\cC : \ell(g)=0\}$. From the Riesz representation theorem, there exists a unique positive measure $\mu$ on $(\R,\cX)$ such that 
$$\forall g\in\cC_0,\quad \widetilde e^*_0(g) = \mu(g) := \int_\R g\, d\mu.$$ 
Then, writing $g = (g-\ell(g)\, \mathbf{1}_\R)+ \ell(g)\, \mathbf{1}_\R$ for any $g\in\cC$, we obtain that  
$$\widetilde e^*(g) = \mu\big(g-\ell(g)\, \mathbf{1}_\R\big) +  \widetilde e^*(\mathbf{1}_\R)\, \ell(g).$$
We conclude by observing that, for any $f\in\cC_V$, we have $e^*(f) = \widetilde e^*(f/V)$. 
\end{proof}

\begin{alem} \label{lem-val-prop-simple} 
If $f,g\in \cC_V$ are such that
$P_\gamma f=r(\gamma)f$ and $P_\gamma g=r(\gamma)g$ with $f>0$, then $g\in \C\cdot f$. 
\end{alem}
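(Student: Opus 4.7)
The plan is to exploit the positivity of $P_\gamma$ together with the strong positivity already established in Remark~\ref{rqe-dec-fl} (coming from $p>0$), and to combine it with a strictly positive dual eigenvector provided by Krein--Rutman.

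First, since $\cC_V$ is stable under complex conjugation and $P_\gamma\overline{g}=\overline{P_\gamma g}$, the real and imaginary parts of any complex eigenfunction at $r(\gamma)$ are themselves eigenfunctions at $r(\gamma)$; it therefore suffices to prove that any real-valued eigenfunction $g$ is a real multiple of $f$. Next, since Corollary~\ref{cor-QC-gamma} shows that $P_\gamma$ is quasi-compact on the Banach lattice $\cC_V$ with $r_{ess}(P_\gamma)=0<r(\gamma)$, the Krein--Rutman theorem yields a non-null non-negative $\psi\in\cC_V^*$ with $P_\gamma^*\psi=r(\gamma)\psi$. I claim that $\psi$ is in fact \emph{strictly} positive on the nontrivial positive cone: indeed Remark~\ref{rqe-dec-fl} gives $\psi(P_\gamma\phi)>0$ for every non-null, non-negative $\phi\in\cC_V$, and since $\psi(P_\gamma\phi)=r(\gamma)\,\psi(\phi)$ with $r(\gamma)>0$, we obtain $\psi(\phi)>0$.

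Having such a $\psi$, set $c:=\psi(g)/\psi(f)\in\R$, which is well-defined because $\psi(f)>0$, and define $h:=g-cf\in\cC_V$, so that $P_\gamma h=r(\gamma)h$ and $\psi(h)=0$. Since $\cC_V$ is a Banach lattice, $|h|\in\cC_V$, and the positivity of $P_\gamma$ together with the triangle inequality give $r(\gamma)|h|=|P_\gamma h|\le P_\gamma|h|$ pointwise. Applying the non-negative functional $\psi$ to both sides produces equality of the two sides, and the strict positivity of $\psi$ then upgrades the pointwise inequality to an equality, i.e.\ $P_\gamma|h|=r(\gamma)|h|$. Now if $h\not\equiv 0$ then $|h|$ is a non-null non-negative element of $\cC_V$, so Remark~\ref{rqe-dec-fl} gives $P_\gamma|h|>0$ everywhere, hence $|h|>0$ everywhere. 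Being real and continuous, $h$ has constant sign; up to replacing $h$ by $-h$ we may assume $h>0$ everywhere, and strict positivity of $\psi$ then forces $\psi(h)>0$, contradicting $\psi(h)=0$. Therefore $h\equiv 0$ and $g=cf$. Applying this in turn to $\mathrm{Re}\,g$ and $\mathrm{Im}\,g$ completes the proof for complex $g$.

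The only genuinely non-formal input is the existence of a non-zero non-negative $\psi\in\cC_V^*\cap\ker(P_\gamma^*-r(\gamma)I)$, which comes from Krein--Rutman applied to the quasi-compact positive operator $P_\gamma$ on the Banach lattice $\cC_V$; the crucial \emph{strict} positivity of this $\psi$ is then obtained for free from the first clause of Hypothesis~\ref{hypcompl} (already verified in Remark~\ref{rqe-dec-fl}) together with $r(\gamma)>0$. Everything else is a standard Banach-lattice manipulation.
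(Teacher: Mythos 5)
Your proof is correct but takes a noticeably different path from the paper's. The paper's proof (and the decomposition of labour it reflects) chooses $\beta$ so that $h:=g-\beta f$ satisfies the \emph{pointwise} normalization $h(0)=0$; it then cites Proposition~\ref{firstorder} for the identity $P_\gamma|h|=r(\gamma)|h|$, evaluates at $x=0$, and uses the strict positivity of $p$ to force $\int |h(y)|e^{-\gamma\xi(y)}p(y)\,dy=0$, hence $|h|\equiv 0$ by continuity. Your version replaces the pointwise normalization by the \emph{dual} normalization $\psi(h)=0$ via a strictly positive eigenfunctional, and therefore needs the extra preliminary steps (reduction to real $g$, existence and strict positivity of $\psi$, and the sign argument from $|h|>0$ plus continuity and connectedness of $\R$). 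Note also that your derivation of $P_\gamma|h|=r(\gamma)|h|$ via $\psi$ is essentially unpacking the second bullet of Proposition~\ref{firstorder}, which the paper simply invokes, and your appeal to Krein--Rutman is precisely what that proposition encapsulates for quasi-compact positive operators on a Banach lattice. The two proofs are thus close in spirit but differ in packaging: the paper's is shorter and exploits the function-space structure of $\cC_V$ through evaluation at a point and positivity of the transition density, while yours is more abstract and would survive in settings without a useful point evaluation, at the cost of the additional real/imaginary reduction and the constant-sign argument.
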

\begin{proof}{}
Let $f,g\in\ker(P_\gamma - r(\gamma) I)$ with $f>0$.
Let $\beta\in\C$ be such that $h:=g - \beta f$ vanishes at $0$. Since $h\in\ker(P_\gamma - r(\gamma) I)$ we deduce from Proposition~\ref{firstorder}  that $P_\gamma |h| = r(\gamma) |h|$. Then $|h|(0)=0$, the positivity of $p(\cdot)$ and finally the continuity of $|h|$ show that  $h=0$. 
\end{proof}
\begin{alem} \label{lem-val-prop-simple2} 
Let $h\in\cC_V$ with $|h|>0$ and $\lambda\in\mathbb C$ be such that
$|\lambda|=1$ and $P\frac{h}{|h|}=\lambda \frac{h}{|h|}$ in $\mathbb L^1(\pi)$. Then $\lambda=1$.
\end{alem}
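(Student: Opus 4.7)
The plan is to show that $k := h/|h|$, which is a continuous function of modulus one everywhere (since $|h|>0$), must be Lebesgue-almost everywhere equal to a single constant of modulus one; the eigenvalue equation then forces $\lambda=1$.

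First I would take moduli in the eigenvalue relation. Since $|k|\equiv 1$ and $P$ is a Markov kernel, $|Pk(x)|\le P|k|(x)=1$ for every $x\in\R$, while the relation $Pk=\lambda k$ in $\L^1(\pi)$ combined with $|\lambda|=1$ gives $|Pk(x)|=1$ for $\pi$-a.e.\ $x$. Hence
\[
\bigl|\textstyle\int_\R k(y)\,p(y-\alpha x)\,dy\bigr| \;=\; \int_\R |k(y)|\,p(y-\alpha x)\,dy \qquad (\pi\text{-a.e. }x).
\]
The equality case in the triangle inequality for complex-valued integrals forces $k(y)$ to have a constant argument with respect to the measure $p(y-\alpha x)\,dy$. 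Since $p>0$ everywhere on $\R$, this measure is equivalent to Lebesgue measure, so for $\pi$-a.e.\ $x$ there exists a phase $\phi(x)\in\R$ such that $k(y)=e^{i\phi(x)}$ for Lebesgue-a.e.\ $y$.

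Next I would observe that the conclusion $k(y)=e^{i\phi(x)}$ Lebesgue-a.e.\ in $y$ is for a fixed function $k$ and therefore cannot depend on $x$: there is a single constant $c\in\C$ with $|c|=1$ such that $k=c$ Lebesgue-a.e.\ on $\R$. Moreover the stationary law $\pi$ is absolutely continuous with respect to Lebesgue measure, because $\pi=\pi P$ and for every $x$ the kernel $P(x,\cdot)$ has the Lebesgue density $y\mapsto p(y-\alpha x)$; hence $\pi$ admits the density $y\mapsto \int_\R p(y-\alpha x)\,\pi(dx)$. Consequently $k=c$ also $\pi$-a.e.

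Finally, from $k=c$ Lebesgue-a.e.\ we get $Pk(x)=c\int_\R p(y-\alpha x)\,dy=c$ for every $x\in\R$, while on the other hand $Pk=\lambda k=\lambda c$ $\pi$-a.e. Since $c\ne 0$, this yields $\lambda=1$. The only delicate point is the translation from the pointwise-in-$x$ statement ``$k$ is Lebesgue-a.e.\ constant'' to the global statement ``$k$ is Lebesgue-a.e.\ equal to one fixed constant'', which is why the positivity $p>0$ and the absolute continuity of $\pi$ with respect to Lebesgue measure play an essential role.
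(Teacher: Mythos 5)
Your proof is correct, but it takes a genuinely different route from the paper. The paper's argument is a two-line citation: since $k:=h/|h|$ is a bounded continuous function, it lies in $\cC_b\subset\cB_V$; and since the autoregressive chain is $V$-geometrically ergodic (Meyn--Tweedie), $1$ is the only peripheral eigenvalue of $P$ on $\cB_V$, whence $\lambda=1$. You instead give a self-contained, elementary argument that actually identifies the eigenfunction: taking moduli in the eigenvalue relation, using the equality case of the triangle inequality together with the strict positivity of the transition density $p$ (so that each $P(x,\cdot)$ is equivalent to Lebesgue measure), and then passing through the absolute continuity of $\pi$ with respect to Lebesgue measure, you show that $k$ is a.e.\ constant and read off $\lambda=1$ directly. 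The trade-off is clear: the paper's proof is shorter but invokes a black box; yours is longer but avoids the spectral theory of $V$-geometrically ergodic chains altogether, and it yields slightly more information ($k$ is constant, not merely that the peripheral spectrum is trivial). One small remark: the paper's proof, as written, silently upgrades the relation $Pk=\lambda k$ from $\L^1(\pi)$ to $\cB_V$ (this is legitimate here because $Pk$ and $k$ are continuous and $\pi$ has full support), whereas your argument stays in $\L^1(\pi)$ throughout, so no such upgrade is needed.
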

\begin{proof}{}
Observe that $\frac h{|h|}$ is in $\mathcal C_b$ so in $\cB_V$.
But it is known from \cite{MeynTweedie09} that $(X_n)_n$
is $V$-geometrically ergodic, so $\lambda$=1.
\end{proof}
\subsection{Proof of Part~(3) of Theorem \ref{proprieteAR}}
We assume now that $\xi\in\cB_{V^{\frac 1{1+\tau}}}$ for some $\tau>0$
and that $[\xi=0]$ has Lebesgue measure 0.

Let $0<a_0<a_1<a_1+\frac{1}{1+\tau}<a_2<a_3=1$.
Let us prove that the additional assumptions of Theorem \ref{generalspectraltheorem2}
hold true with $\cB_i:=\mathcal \cC_{V^{a_i}}$
for $i\in\{0,1,2,3\}$. Let $i\in\{0,1,2\}$.
The fact that $(P_\gamma)_\gamma$ satisfies the conditions of Hypothesis \ref{hypKL}* on $(J,\cB_{i},\cB_{i+1})$  comes from Lemma \ref{lem-cont-P-gamma} and 
Remark \ref{cor-QC-gammabis}.
The fact that Hypothesis \ref{hypcompl} is satisfied on $\cB_{i+1}$
comes from Remark \ref{rqe-dec-fl} and from Lemmas \ref{lem-val-prop-simple} and \ref{lem-val-prop-simple2} applied with $V^{a_{i+1}}$ (in place of $V$). 
Observe that 
$$\|\xi f\|_{\cB_2}=\sup\frac{\|\xi f\|}{V^{a_2}}
      \le\sup\frac {\|\xi \|}{V^{\frac 1{1+\tau}}}\, \sup\frac{\| f\|}{V^{a_1}}
      \le \|f\|_{\cB_1}\sup \frac{\|\xi \|}{V^{\frac 1{1+\tau}}}.$$
Hence we have proved that $f\mapsto \xi f$ is in $\cL(\cB_1,\cB_2)$.
The fact that $\gamma\mapsto P_\gamma$ is $C^1$ from $(0,+\infty)$
to $\cL(\cB_1,\cB_2)$ and that $P'_\gamma:=P_\gamma(-\xi f)$ 
comes from the proof of \cite[Lemma 10.4]{HerPen10}.
We conclude as explained after Theorem \ref{generalspectraltheorem2}. 

\appendix
\section{Operator techniques} \label{proofoperator}

We use the notations of Section \ref{nota}.
\subsection{Decrease of $r$}
The non-increasingness of $r$ was studied in Lemma \ref{LEMME0}.
The next result gives a way to prove that $r'\ne 0$ and so the decrease of $\gamma\mapsto r(\gamma)$. 
\begin{apro}\label{deriveenegative} 
Let $J=(a,b)\subset[0,+\infty)$ and let $ \cB_1\hookrightarrow \cB_2$ be two Banach spaces such that $P_\gamma\in\cL(\cB_1)\cap\cL(\cB_2)$ for every $\gamma\in J$. Assume that
$f\mapsto \xi f\in\cL(\cB_1,\cB_2)$ and that,
for every $\gamma\in J$, there exist $\phi_\gamma\in\cB_1$ and $\pi_\gamma\in \cB^*_2$ such that
$P_\gamma\phi_\gamma=r(\gamma)\phi_\gamma$
and $P^*_\gamma\pi_\gamma=r(\gamma)\pi_\gamma$ (where $P^*_\gamma$ is the dual operator of $P_\gamma$).
Let $\gamma_0$ be a point of $J$ at which the functions $\gamma\mapsto P_\gamma$ from $J$ to $\cL(\cB_1,\cB_2)$ and  $\gamma\mapsto
r(\gamma)$ from $J$ to $\C$ are differentiable with
respective derivatives $f\mapsto P_{\gamma_0}(-\xi f)$ and $r'(\gamma_0)$.
We assume moreover that, at $\gamma_0$, $\gamma\mapsto\phi_\gamma$ is continuous from $J$ to $\cB_1$ and differentiable from $J$ to $\cB_2$
with derivative $\phi'_{\gamma_0}$. 

If $r(\gamma_0)\neq0$ and $r'(\gamma_0)=0$ then  $\pi_{\gamma_0}(\xi\phi_{\gamma_0})=0$.
\end{apro}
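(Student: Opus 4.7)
The plan is to differentiate the eigenvalue identity $P_\gamma \phi_\gamma = r(\gamma)\phi_\gamma$ at $\gamma_0$ in $\cB_2$, and then apply the eigen-covector $\pi_{\gamma_0}$ to cancel the unknown derivative $\phi'_{\gamma_0}$ via the adjoint eigenequation. This is the standard first-order perturbation trick; the only subtlety is that $\phi_\gamma$ has different regularities in $\cB_1$ and $\cB_2$.

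First I would write the splitting
\[
P_\gamma \phi_\gamma - P_{\gamma_0}\phi_{\gamma_0}
= (P_\gamma - P_{\gamma_0})\phi_{\gamma_0}
+ P_{\gamma_0}(\phi_\gamma - \phi_{\gamma_0})
+ (P_\gamma - P_{\gamma_0})(\phi_\gamma - \phi_{\gamma_0}),
\]
divide by $\gamma - \gamma_0$, and let $\gamma\to\gamma_0$ in $\cB_2$. The first term converges to $-P_{\gamma_0}(\xi\phi_{\gamma_0})$ in $\cB_2$ since $\gamma\mapsto P_\gamma$ is differentiable from $J$ to $\cL(\cB_1,\cB_2)$ at $\gamma_0$ with derivative $f\mapsto P_{\gamma_0}(-\xi f)$ (and $\phi_{\gamma_0}\in\cB_1$). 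The second term converges to $P_{\gamma_0}\phi'_{\gamma_0}$ in $\cB_2$, using $P_{\gamma_0}\in\cL(\cB_2)$ together with the differentiability of $\gamma\mapsto\phi_\gamma$ from $J$ to $\cB_2$. For the third, cross, term one bounds
\[
\left\|\frac{(P_\gamma - P_{\gamma_0})(\phi_\gamma - \phi_{\gamma_0})}{\gamma-\gamma_0}\right\|_{\cB_2}
\leq \frac{\|P_\gamma - P_{\gamma_0}\|_{\cL(\cB_1,\cB_2)}}{|\gamma-\gamma_0|}\,\|\phi_\gamma-\phi_{\gamma_0}\|_{\cB_1},
\]
where the first factor stays bounded (by differentiability of $P_\cdot$ in $\cL(\cB_1,\cB_2)$) and the second vanishes (by continuity of $\phi_\cdot$ in $\cB_1$). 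Handling this cross term is the one place where both hypotheses on $\phi_\gamma$ (continuity in $\cB_1$, differentiability in $\cB_2$) are used simultaneously, and it is the main, though not difficult, obstacle.

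On the right-hand side, $\gamma\mapsto r(\gamma)\phi_\gamma$ is differentiable in $\cB_2$ at $\gamma_0$ with derivative $r'(\gamma_0)\phi_{\gamma_0}+r(\gamma_0)\phi'_{\gamma_0}$ by the scalar product rule. Equating the two derivatives in $\cB_2$ gives
\[
-P_{\gamma_0}(\xi\phi_{\gamma_0}) + P_{\gamma_0}\phi'_{\gamma_0}
= r'(\gamma_0)\phi_{\gamma_0} + r(\gamma_0)\phi'_{\gamma_0}.
\]
Applying $\pi_{\gamma_0}\in\cB_2^*$ and using $\pi_{\gamma_0}(P_{\gamma_0}\,\cdot) = (P^*_{\gamma_0}\pi_{\gamma_0})(\cdot) = r(\gamma_0)\pi_{\gamma_0}(\cdot)$ cancels the $\phi'_{\gamma_0}$ contributions, leaving
\[
-r(\gamma_0)\,\pi_{\gamma_0}(\xi\phi_{\gamma_0}) = r'(\gamma_0)\,\pi_{\gamma_0}(\phi_{\gamma_0}).
\]
Under the assumptions $r'(\gamma_0)=0$ and $r(\gamma_0)\neq 0$, this forces $\pi_{\gamma_0}(\xi\phi_{\gamma_0})=0$, which is the claim.
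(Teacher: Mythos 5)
Your proof is correct and follows essentially the same route as the paper: differentiate the eigen-equation $P_\gamma\phi_\gamma = r(\gamma)\phi_\gamma$ in $\cB_2$ at $\gamma_0$ and apply the adjoint eigen-covector $\pi_{\gamma_0}$ to kill the $\phi'_{\gamma_0}$ terms. Your three-term splitting is the paper's two-term one $P_{\gamma_0}(\phi_\gamma-\phi_{\gamma_0})+(P_\gamma-P_{\gamma_0})\phi_\gamma$ with the cross term made explicit, and you state the intermediate first-order identity $-r(\gamma_0)\pi_{\gamma_0}(\xi\phi_{\gamma_0})=r'(\gamma_0)\pi_{\gamma_0}(\phi_{\gamma_0})$ before specializing to $r'(\gamma_0)=0$, but these are only presentational differences.
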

\begin{proof}
We have $P_\gamma\phi_\gamma = r(\gamma)\phi_\gamma$ in $\cB_2$.
We derive this formula at $\gamma_0$ by writing $P_\gamma\phi_\gamma-
P_{\gamma_0}\phi_{\gamma_0}=P_{\gamma_0}(\phi_\gamma-\phi_{\gamma_0})+
(P_\gamma-P_{\gamma_0})(\phi_{\gamma})$. Using the fact that
$r'(\gamma_0) = 0$, we obtain that
$$  P_{\gamma_0}(\phi'_{\gamma_0})+P_{\gamma_0}( -\xi \phi_{\gamma_0})
=r(\gamma_0)\phi'_{\gamma_0}\ \ \ \mbox{in}\ \cB_2.$$
Composing by $\pi_{\gamma_0}$, we obtain
$0 = \pi_{\gamma_0} P_{\gamma_0}\big(\xi \phi_{\gamma_0}\big) =  r(\gamma_0)\pi_{\gamma_0} \big(\xi \phi_{\gamma_0}\big)$, thus $\pi_{\gamma_0} \big(\xi \phi_{\gamma_0}\big) = 0$.
\end{proof}
\subsection{Proof of Theorems \ref{generalspectraltheorem1} and \ref{generalspectraltheorem2}} \label{ap-proof-general-th}
Let us state, more precisely than in Theorem \ref{thmkellerliverani1}, the Keller-Liverani perturbation theorem.  
\begin{atheo}[Keller-Liverani Perturbation Theorem \cite{KelLiv99,Liv03,Ferre}]
\label{thmkellerliverani}
Let $(\mathcal X_0,\|\cdot\|_{\cX_0})$ be a Banach space and $(\mathcal X_1,\|\cdot\|_{\mathcal X_1})$ be a normed space such that $\mathcal X_0\hookrightarrow \mathcal X_1$. Let $J\subset[-\infty,+\infty]$ be an interval and let $(Q(t))_{t\in J}$ be a family of operators. We assume that 
\begin{itemize}
\item For every $t\in J$, $Q(t)\in \mathcal L(\mathcal X_0)\cap\mathcal L(\mathcal X_1)$,
\item $t\mapsto Q(t)$ is a continuous map from $J$ in $\mathcal L( \mathcal X_0,\mathcal X_1)$,  
\item There exist $\delta_0>0$, $c_0,M_0>0$ such that for every $t\in J$
$$\forall f\in\mathcal X_0,\ \forall n\in \mathbb Z_+,
    \quad
       \|(Q(t))^n f\|_{\mathcal X_0}\le c_0(\delta_0^n\|f\|_{\cX_0}
           + M_0^n\Vert  f\Vert_{\cX_1}).$$
\end{itemize}
Let $t_0\in J$. Then, for every $\varepsilon>0$ and every $\delta>\delta_0$, there exists $I_0\subset J$ containing $t_0$ such that
$$\sup_{t\in I_0,\, z\in\mathcal D(\delta,\varepsilon)}\|(zI-Q(t))^{-1}\|_{\cX_0}<\infty,$$
with $\mathcal D(\delta,\varepsilon):=\{z\in \mathbb C,\ 
       d(z,\sigma(Q(t_0)_{|\cX_0}))>\varepsilon,\ |z|>\delta\}$. 
					
Furthermore the map $t\mapsto(zI-Q(t))^{-1}$ from $J$ to 
$\mathcal L(\mathcal X_0,\mathcal X_1)$ is continuous at $t_0$ in a uniform way with respect to $z\in\mathcal D(\delta,\varepsilon)$, i.e.
$$\lim_{t\rightarrow t_0,\, t\in J} \sup\left\{\|(zI-Q(t))^{-1}-(zI-Q(t_0))^{-1}\|_{\cX_0,\cX_1};\ z\in \mathcal D(\delta,\varepsilon)\right\}=0.$$
In particular, $\limsup_{t\rightarrow t_0}
r((Q(t))_{|\mathcal X_0})\le \max(\delta_0,r((Q(t_0))_{|\mathcal X_0}))$.
Moreover the map
$t\mapsto r((Q(t))_{|\mathcal X_0})$ is continuous on $\{t\in J : r((Q(t))_{|\cX_0}) > \delta_0\ge r_{ess}((Q(t))_{|\cX_0})\}$.
\end{atheo}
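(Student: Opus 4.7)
The plan is to follow the classical Keller--Liverani strategy, which establishes (i) a uniform bound on $(zI-Q(t))^{-1}$ in $\mathcal L(\mathcal X_0)$ on a suitable spectral region, (ii) continuity of this resolvent as an operator from $\mathcal X_0$ to $\mathcal X_1$, and then (iii) deduces the spectral consequences. The whole argument rests on the Doeblin--Fortet-type inequality and on a truncated Neumann-type expansion of the resolvent.

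First, for $z\in\C$ with $|z|>\delta>\delta_0$, and assuming for a moment that $z\notin\sigma(Q(t)_{|\mathcal X_0})$, one has the exact identity
\begin{equation*}
(zI-Q(t))^{-1} \;=\; \frac{1}{z}\sum_{n=0}^{N-1}\frac{Q(t)^n}{z^n} \;+\; \frac{Q(t)^N}{z^N}(zI-Q(t))^{-1},
\end{equation*}
valid for every integer $N\geq 1$. Using the Doeblin--Fortet hypothesis, the $\mathcal X_0$-norm of the remainder is controlled by $c_0 (\delta_0/|z|)^N\|(zI-Q(t))^{-1}\|_{\mathcal X_0} + c_0(M_0/|z|)^N\|(zI-Q(t))^{-1}\|_{\mathcal X_0,\mathcal X_1}$. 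The first factor can be made arbitrarily small by choosing $N$ large since $\delta_0<|z|$, while the second involves the weaker norm where the perturbation $Q(t)-Q(t_0)$ is small.

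Second, I would compare $(zI-Q(t))^{-1}$ and $(zI-Q(t_0))^{-1}$ via the resolvent identity
\begin{equation*}
(zI-Q(t))^{-1} - (zI-Q(t_0))^{-1} \;=\; (zI-Q(t))^{-1}\bigl(Q(t)-Q(t_0)\bigr)(zI-Q(t_0))^{-1}.
\end{equation*}
On $\mathcal D(\delta,\varepsilon)$, the operator $(zI-Q(t_0))^{-1}$ is uniformly bounded in $\mathcal L(\mathcal X_0)$ (classical since $\mathcal D(\delta,\varepsilon)$ lies in the resolvent set at the $\varepsilon$-distance from $\sigma(Q(t_0)_{|\mathcal X_0})$). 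Combined with $\|Q(t)-Q(t_0)\|_{\mathcal X_0,\mathcal X_1}\to 0$ and the bootstrap estimate from the previous step, this controls the norm of $(zI-Q(t))^{-1}$ in $\mathcal L(\mathcal X_0)$ and shows that the difference tends to zero in $\mathcal L(\mathcal X_0,\mathcal X_1)$, uniformly in $z\in\mathcal D(\delta,\varepsilon)$.

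The main obstacle is the bootstrap: one needs $\|(zI-Q(t))^{-1}\|_{\mathcal X_0}$ to be finite and uniformly bounded before the comparison yields anything. This is handled by inserting the Neumann identity into the resolvent identity, iterating, and solving a scalar inequality of the form $x\le A+\eta_N x+\mu_t x$ with $\eta_N\to 0$ as $N\to\infty$ (independently of $t$) and $\mu_t\to 0$ as $t\to t_0$ (for fixed $N$); one chooses $N$ large, then restricts $t$ to a neighborhood $I_0$ of $t_0$ so that $\eta_N+\mu_t<1/2$, which closes the estimate.

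Once uniform resolvent bounds on $\mathcal D(\delta,\varepsilon)$ are established, the spectral conclusions are standard: the spectrum of $Q(t)_{|\mathcal X_0}$ outside $\{|z|\leq\delta\}$ cannot approach $\sigma(Q(t_0)_{|\mathcal X_0})^c\cap\{|z|>\delta\}$, which gives $\limsup_{t\to t_0} r(Q(t)_{|\mathcal X_0})\leq\max(\delta_0,r(Q(t_0)_{|\mathcal X_0}))$. Finally, when $r(Q(t_0)_{|\mathcal X_0})>\delta_0\geq r_{ess}(Q(t_0)_{|\mathcal X_0})$, the peripheral spectrum consists of finitely many eigenvalues of finite multiplicity, whose persistence and continuity in $t$ follow from standard Riesz projection arguments applied to $\oint(zI-Q(t))^{-1}\,dz$ along a small circle enclosing the peripheral spectrum; this yields continuity of $t\mapsto r(Q(t)_{|\mathcal X_0})$ on the set described in the statement.
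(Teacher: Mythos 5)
This theorem is stated in the paper as a cited result from \cite{KelLiv99,Liv03,Ferre}; the paper gives no proof of it, so there is no in-paper argument to compare your sketch against. On its own terms, your outline does capture the correct Keller--Liverani strategy: a truncated Neumann expansion combined with the second resolvent identity, closed by a scalar bootstrap in which one first tunes the truncation order $N$ (to make $\eta_N$ small, exploiting $\delta_0<|z|$) and then shrinks the neighbourhood $I_0$ of $t_0$ (to make $\mu_t$ small for that fixed $N$, exploiting $\|Q(t)-Q(t_0)\|_{\mathcal X_0,\mathcal X_1}\to0$).

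Two points are glossed over and would need to be filled in to make the argument complete. First, you "assume for a moment" that $z\notin\sigma(Q(t)_{|\mathcal X_0})$ in order to write the truncated Neumann identity, but the resulting a~priori estimate does not by itself prove that $zI-Q(t)$ is invertible on $\mathcal D(\delta,\varepsilon)$; an a~priori bound on a putative inverse is not an existence proof. The way this is closed in \cite{KelLiv99} is by exhibiting an explicit approximate inverse for $zI-Q(t)$, built from $R_0(z)$ and the partial Neumann sum, and showing the resulting error operator is a strict contraction in $\mathcal L(\mathcal X_0)$ once $\eta_N+\mu_t<1$, whence invertibility follows from a Neumann series in the error. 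Second, the last step ("continuity of $t\mapsto r(Q(t)_{|\mathcal X_0})$ follows from standard Riesz projection arguments") is more delicate than stated: the Riesz projections $\Pi(t)=\frac{1}{2i\pi}\oint (zI-Q(t))^{-1}\,dz$ converge to $\Pi(t_0)$ only in $\mathcal L(\mathcal X_0,\mathcal X_1)$, not in $\mathcal L(\mathcal X_0)$, so the classical lemma "two projections close in operator norm have equal rank" does not apply directly. Your $\limsup$ bound gives upper semicontinuity of $r$, but lower semicontinuity --- hence continuity on the stated set --- requires a separate rank-preservation argument for $\Pi(t)$, which is exactly where the finite-dimensionality coming from $r_{ess}(Q(t_0)_{|\mathcal X_0})\le\delta_0$ and the interplay between the two norms are exploited in \cite{KelLiv99}.
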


Let $\mathcal B$ be a nonnull complex Banach lattice of functions 
$f:\X\rightarrow \C$ (or of classes of such functions
modulo a nonnegative nonnull measure $\mathbf m$).
If $f\in\mathcal B$ is a class of functions, we say that it is nonnegative
resp. positive if one of its representant is so and we say that it is nonnull
if the null function is not one of its representant.
We say that $\psi\in\mathcal B^*$ is nonegative 
if for every nonnegative $f\in\mathcal B$, $\psi(f)\ge 0$ and that 
$\psi\in\mathcal B^*$ is positive 
if for every nonnegative nonnull $f\in\mathcal B$, $\psi(f)> 0$.
\begin{apro}[First order of the spectral radius] \label{firstorder} 
Let $\mathcal B$ be a non null complex Banach lattice of functions 
 $f:\X\rightarrow \C$ (or of classes of such functions
modulo some nonnegative nonnull measure $\mathbf m$). Let $Q$ be a (nonnull) nonnegative quasicompact operator on $\mathcal B$ such that $r(Q)\ne 0$ and such that
for every nonnull nonnegative $f\in\mathcal B$ and for every nonnull nonnegative $\psi\in\mathcal B ^*\cap\ker(Q^*-r(Q)I)$, we have $Qf>0$ (modulo $\mathbf m$) and $\psi(Qf)> 0$.
Then
\begin{itemize}
\item $r(Q)$ is a first order pole of $Q$, and there exists a positive $\phi\in\cB$ and a  positive $\psi\in\cB ^*$ such that
$$\psi(\phi)=1, \qquad Q \phi = r(Q) \phi \qquad\text{and}\qquad Q^*\psi = r(Q) \psi.$$
\item Let $\lambda\in\C$ and $h\in\cB$ such that $|\lambda|=r(Q)$ and $Q h = \lambda h$. Then $Q |h| = r(Q) |h|$ in $\cB$.
\item If moreover $Q$ is if the form $Q=P(e^{-\gamma\xi}\cdot)$ where $P$ is the operator associated to
a Markov kernel, if $1_{\X}\in \cB\hookrightarrow \L^1(\pi)$, if $\ker(Q-r(Q)I)=\C\cdot \phi$ and  
if 1 is the only complex number $\lambda$ of modulus 1 such that
 $P(h/|h|)=\lambda h/|h|$ in $\L^1(\pi)$ for some $h\in\cB$ with $|h|>0$, then $r(Q)$ is the only eigenvalue of modulus 
$r(Q)$ of $Q$.
\end{itemize}
\end{apro}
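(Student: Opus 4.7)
The strategy is a Krein--Rutman/Perron--Frobenius argument adapted to the quasicompact setting, extracting everything from the Laurent expansion of the resolvent at the isolated pole $r(Q)$ (isolated because quasicompactness gives $r_{ess}(Q)<r(Q)$, so $r(Q)$ is an isolated eigenvalue of finite algebraic multiplicity, hence a pole of $(zI-Q)^{-1}$). Write
\[
(zI-Q)^{-1}=\sum_{k\ge -p}(z-r(Q))^k B_k
\]
with pole order $p\ge 1$ and $B_{-p}\ne 0$. Equating coefficients in $(zI-Q)(zI-Q)^{-1}=(zI-Q)^{-1}(zI-Q)=I$ yields the recursion
\[
B_{-j-1}=(Q-r(Q)I)B_{-j}=B_{-j}(Q-r(Q)I),\qquad j\ge 1,
\]
and, using $B_{-(p+1)}=0$, shows that the image of $B_{-p}$ lies in $\ker(Q-r(Q)I)$ and the image of $B_{-p}^*$ lies in $\ker(Q^*-r(Q)I)$. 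Letting $z\to r(Q)^+$ along reals in the Neumann series $(zI-Q)^{-1}=\sum_{n\ge 0}z^{-n-1}Q^n$ (valid for $z>r(Q)$) shows that both $B_{-p}$ and $B_{-p}^*$ preserve nonnegativity.

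For item~1, since $\cB$ is a Banach lattice and $B_{-p}\ne 0$, there exists a nonnull nonnegative $f_0$ with $\phi:=B_{-p}f_0\ne 0$; then $\phi\ge 0$, $Q\phi=r(Q)\phi$, and the hypothesis forces $r(Q)\phi=Q\phi>0$, so $\phi>0$. Pick a nonnegative $\mu\in\cB^*$ with $\mu(\phi)>0$ (for instance $\mu=\pi$ when $\cB\hookrightarrow \L^1(\pi)$) and set $\psi:=B_{-p}^*\mu$; then $\psi\ge 0$, $Q^*\psi=r(Q)\psi$ and $\psi(f_0)=\mu(\phi)>0$, so $\psi$ is nonnull. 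The hypothesis $\psi(Qf)>0$ for every nonnull nonnegative $f$, combined with $Q^*\psi=r(Q)\psi$, gives $\psi(f)=\psi(Qf)/r(Q)>0$, so $\psi$ is strictly positive. If $p\ge 2$, the recursion gives $\phi=(Q-r(Q)I)h$ with $h:=B_{-(p-1)}f_0$, and pairing with $\psi$ yields the contradiction $0=\psi(\phi)>0$; hence $p=1$. Normalising by $\psi(\phi)=1$ finishes item~1. Item~2 is then immediate: positivity of $Q$ on the lattice gives $|Qh|\le Q|h|$, so from $|Qh|=|\lambda h|=r(Q)|h|$ we get $g:=Q|h|-r(Q)|h|\ge 0$; pairing with $\psi$ gives $\psi(g)=0$, and strict positivity of $\psi$ forces $g=0$.

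For item~3, take $|\lambda|=r(Q)$ and $Qh=\lambda h$ with $h\ne 0$. By item~2, $Q|h|=r(Q)|h|$, so the simplicity hypothesis $\ker(Q-r(Q)I)=\C\cdot\phi$ gives $|h|=c\phi$ for some $c>0$, whence $|h|>0$ modulo $\pi$. Set $\theta:=\lambda/r(Q)$ and $u:=h/|h|$ (so $|u|=1$). From $Qh=\lambda h$ and $Q|h|=r(Q)|h|$ together with $Q=P(e^{-\gamma\xi}\,\cdot\,)$ one obtains, for $\pi$-a.e.\ $x$,
\[
\theta\,u(x)=\int_{\X}u(y)\,\mu_x(dy),\qquad \mu_x(dy):=\frac{e^{-\gamma\xi(y)}|h|(y)}{P(e^{-\gamma\xi}|h|)(x)}\,P(x,dy),
\]
which is a probability measure. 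Since $|\theta u(x)|=1=|u(y)|$ $\mu_x$-a.s., equality in the triangle inequality forces $u(y)=\theta u(x)$ for $\mu_x$-a.e.\ $y$. Because $\mu_x$ is equivalent to $P(x,\cdot)$ on $\{|h|>0\}$, and because $P$-invariance of $\pi$ together with $\pi(\{|h|=0\})=0$ gives $P(x,\{|h|=0\})=0$ for $\pi$-a.e.\ $x$, this upgrades to $u(y)=\theta u(x)$ for $P(x,\cdot)$-a.e.\ $y$. Integrating against $P(x,\cdot)$ yields $Pu=\theta u$ in $\L^1(\pi)$, and the aperiodicity hypothesis (applied to the present $h\in\cB$ with $|h|>0$ mod $\pi$) then forces $\theta=1$, i.e.\ $\lambda=r(Q)$. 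The main obstacle is this last measure-theoretic step: transferring the equality $u(y)=\theta u(x)$ from the auxiliary probability $\mu_x$ back to $P(x,\cdot)$ crucially uses the $\pi$-a.e.\ positivity of $|h|$ (provided by item~1 via $|h|=c\phi$) and the $P$-invariance of $\pi$; the abstract Krein--Rutman steps for items~1--2 are essentially standard once nonvanishing of $B_{-p}$ on the positive cone is established.
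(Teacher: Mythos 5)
Your proof follows the same architecture as the paper's: Laurent expansion of the resolvent at $r(Q)$, positivity of the leading coefficient $B_{-p}$ via the Neumann series limit from the right, construction of a positive eigenvector and strictly positive eigenfunctional, a pairing contradiction to force $p=1$, the sandwich $|Qh|\le Q|h|$ killed by the strictly positive $\psi$ for item~2, and the convexity/rigidity argument (equality in the triangle inequality for the probability kernel $\mu_x$) for item~3. Your treatment of the order-one claim (writing $\phi=(Q-r(Q)I)h$ when $p\ge 2$ and pairing with $\psi$) is essentially the paper's argument, just stated slightly differently (the paper phrases it via $A_{-q}^2=0$), and your measure-theoretic upgrade from $\mu_x$-a.e.\ to $P(x,\cdot)$-a.e.\ in item~3 is the same step the paper packages through a $\pi$-full $P$-absorbing set.

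There is, however, one genuine gap. You write that quasicompactness gives $r_{ess}(Q)<r(Q)$, "so $r(Q)$ is an isolated eigenvalue of finite algebraic multiplicity, hence a pole of $(zI-Q)^{-1}$." Quasicompactness only guarantees that \emph{some} point of the circle $\{|z|=r(Q)\}$ lies in the spectrum and that any such point is an isolated pole; it does not by itself put $r(Q)$ in $\sigma(Q)$. For a general (non-positive) quasicompact operator, $r(Q)$ need not be in the spectrum. The paper's proof begins precisely by establishing $r(Q)\in\sigma(Q)$, and this is the first place the Banach-lattice hypothesis is used: it picks a pole $\lambda$ with $|\lambda|=r(Q)$, takes (by Banach–Steinhaus applied to nonnegative $f$) a nonnegative $f$ with $\|(\lambda_n I-Q)^{-1}f\|\to\infty$ for $\lambda_n=\lambda(1+1/n)$, and then uses the lattice inequality
$\bigl|(\lambda_n I-Q)^{-1}f\bigr|\le\sum_{k\ge0}r_n^{-(k+1)}Q^k f$ with $r_n=r(Q)(1+1/n)$
(which passes to the norms because $\cB$ is a Banach lattice) to deduce $\|(r_nI-Q)^{-1}\|\to\infty$, hence $r(Q)\in\sigma(Q)$. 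Your argument should include this step, or at least explicitly invoke the positive-operator-on-a-Banach-lattice fact that the spectral radius belongs to the spectrum; without it, everything built on the Laurent expansion at $r(Q)$ is unjustified.
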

\begin{proof}
The fact that $r(Q)$ is a finite pole of $Q$ is classical for a nonnegative quasi-compact operator $Q$ on a Banach lattice. Let us just remember the main arguments. From quasi-compactness we know that there exists a finite pole $\lambda\in\sigma(Q)$ such that $|\lambda|=r(Q)$. Thus, setting $\lambda_n:=\lambda(1+1/n)$ for any $n\geq 1$, we deduce from $\lambda\in\sigma(Q)$ that 
$$\lim_n \|(\lambda_nI - Q)^{-1}\|_\cB = +\infty.$$
Since $\cB$ is a Banach lattice, we deduce from the Banach-Steinhaus theorem that there exists a nonnegative and nonnull element $f\in\cB$ such that 
$$\lim_n \|(\lambda_nI - Q)^{-1}f\|_\cB = +\infty.$$
Next define $r_n :=r(Q)(1 +1/n)$ and observe that 
$$\big|(\lambda_nI - Q)^{-1}f\big| = \big|\sum_{k\geq 0}  \lambda_n^{-(k+1)}\, Q^{\, k}f\big| \leq \sum_{k\geq 0}  r_n^{-(k+1)}\, Q^{\, k}f.$$
Since $\cB$ is a Banach lattice, the last inequality is true in norm, that is 
$$\big\|(\lambda_nI - Q)^{-1}f\big\| \leq \big\|\sum_{k\geq 0}  r_n^{-(k+1)}\, Q^{\, k}f\big\|$$
from which we deduce that $\lim_n \|(r_nI - Q)^{-1}\|_\cB = +\infty$, thus $r(Q)\in\sigma(Q)$. Finally $r(Q)$ is a finite pole of $Q$ from quasi-compactness. 

Let $q$ denote the order of the pole $r(Q)$, namely $r(Q)$ is a pole of order $q$ of the resolvent function $z\mapsto (zI-Q)^{-1}$. Then there exists $\rho>0$ such that $(zI-Q)^{-1}$ admits the following Laurent series provided that $|z-r(Q)| < \rho$ and $z\neq r(Q)$:  
$$(zI-Q)^{-1} = \sum_{k=-q}^{+\infty} (z-r(Q))^{k} A_k,$$
where $A_k$ are bounded linear operators on $\cB$. By quasi-compactness, $A_{-1}$ is a projection onto the finite subspace $\ker(Q-r(Q) I)^q$. Moreover we know that 
\begin{equation} \label{A-q-res}
A_{-q} = (Q-r(Q) I)^{q-1}\circ A_{-1} =  A_{-1}\circ(Q-r(Q) I)^{q-1}.
\end{equation}
and that, setting $r_n :=r(Q)(1 +1/n)$,   
\begin{eqnarray}
A_{-q}  &=& \lim_{n\r+\infty} (r_n-r(Q))^{q}(r_nI-Q)^{-1} \nonumber \\
&=&  \lim_{n\r+\infty} (r_n-r(Q))^{q}\sum_{k\geq 0}  r_n^{-(k+1)}\, Q^{\, k}. \label{neuman}
\end{eqnarray}
Since $Q$ is a nonnull nonnegative operator on $\cB$, so is $A_{-q}$. Since $A_{-q}\neq0$,
we take a nonnegative $h_0\in\mathcal B$ such that $\phi:= A_{-q} h_0\ne 0$ in $\cB$. 
Moreover we have $(Q-r(Q)I)A_{-q}=0$, so
$r(Q) \phi = Q\phi$. Similarly there exists a nonnegative $\psi_0\in\cB^*$ such that 
$\psi_1 := A_{-q}^*\psi_0$ is a nonzero and nonnegative element of 
$\ker(Q^*-r(Q)I)$, where $A_{-q}^*$ is the adjoint operator of $A_{-q}$.  We have $\psi_1(\phi)=\psi_1(Q\phi)/r(Q)>0$ from our hypotheses, and we set $\psi:=\psi_1/\psi_1(\phi)$.
Now assume that $q\geq 2$. Then  $A_{-q}^{\ 2}=0$ from (\ref{A-q-res}) and $A_{-1}(\cB) = \ker(Q-r(Q) I)^q$, so that $\psi_1(\phi)=(A^*_{-q}\psi_0)(A_{-q}h_0)=\psi_0(A_{-q}^2h_0)=0$. This proves by reductio ad absurdum that $p=1$. 

Observe that, from our hypotheses, we know that $\psi(h)=\psi(Qh)/r(Q)>0$ for every nonnull nonnegative $h\in\cB$.
Now let $\lambda\in\C$ and $h\in\cB$ be such that $|\lambda|=r(Q)$ and $Q h = \lambda h$.
The positivity of $Q$ gives: $|\lambda h| = r(Q) |h| = |Q h| \leq  Q |h|$. Moreover we have
$\psi(Q |h| - r(Q) |h|) = 0$. Thus $Q |h| = r(Q) |h|$ in $\cB$.

Now let us prove the last point of Proposition~\ref{firstorder}. Recall that the above nonnull nonnegative function $\phi\in\cB$ is such that $Q\phi = r(Q)\phi$. From our hypotheses we deduce that $\phi>0$. Let $\lambda\in\C$ and $h\in\cB$ be such that $|\lambda|=r(Q)$, $h\neq 0$   and $P_\gamma h = \lambda h$. 
Due to the previous point and to our assumptions, we obtain that 
$Q |h| = r(Q)\, |h|$ and $|h| = \beta \phi$ for some $\beta>0$.
One may assume that $\beta=1$ for the sake of simplicity. Then there exists a $\pi$-full and $P$-absorbing $A\in\cX$ (i.e. $\pi(A)=1$ and $P(x,A)=1$, $\forall x\in A$) such that 
\begin{subequations}
\begin{gather} 
\forall x\in A,\quad |h(x)| = \phi(x)>0 \label{E1} \\
\forall x\in A,\quad \lambda\, h(x) = \int h(y)\, e^{-\gamma\xi(y)}\, P(x,dy) \label{E2} \\
\forall x\in A,\quad r(Q)\, \phi(x) = \int \phi(y)\, e^{-\gamma\xi(y)}\, P(x,dy). \label{E3}
\end{gather}
\end{subequations}
Let $x\in A$ and define the probability measure: $\eta_x(dy) := (r(Q)\, \phi(x))^{-1}\phi(y)\, e^{-\gamma\xi(y)}\, P(x,dy)$. We have 
$$\int_\R \frac{r(Q)\, \phi(x)\, h(y)}{\lambda\, \phi(y)\, h(x)}\ \eta_x(dy) = 1.
$$
Then a standard convexity argument ensures that the following equality holds for $P(x,\cdot)-$almost every $y\in \X$: 
\begin{equation}\label{ppxy}
r(Q)\,  \phi(x)\, h(y) = \lambda\, \phi(y)\,  h(x).
\end{equation}
We have $r(Q)P\frac{h}{|h|}=\lambda \frac{h} {|h|}$ 
and so $\lambda=r(Q)$.
\end{proof}

From now on, to simplify notations, we write $R_z(\gamma):= (zI-P_\gamma)^{-1}$ for the resolvant when it is well defined. We first prove Theorems~\ref{generalspectraltheorem1} and \ref{generalspectraltheorem2} under Hypothesis \ref{hypKL}. 
Recall that $J_0:=\{t\in J : r(\gamma)>\delta_0\}$.
\begin{proof}[Proof of Theorem \ref{generalspectraltheorem1} under Hypothesis \ref{hypKL}]
The continuity on $J_0$ of $\gamma\mapsto r(\gamma) := r((P_\gamma)_{|\cB_0})$ follows from Theorem \ref{thmkellerliverani}
since $(P_\gamma)_{\gamma}$ satisfies Hypothesis~\ref{hypKL} 
with $(J,\cB_0,\cB_1)$. Moreover, due to Proposition \ref{firstorder} and to Hypothesis~\ref{hypcompl}, we know that, for every $\gamma\in J_0$, $r(\gamma)$ is the only dominating eigenvalue of $(P_\gamma)_{|\cB_0}$ and that it is a simple eigenvalue with multiplicity 1. 

Let $\chi: J_0 \rightarrow (0,+\infty)$ be defined by
$\chi(\gamma) := \max\big(\delta_0,\lambda(\gamma))$,
where we have set $\lambda(\gamma) := 
\max\{|\lambda| : \lambda\in\sigma(P_{\gamma|\cB_0})\setminus\{r(\gamma)\}\}$.
Due to Theorem \ref{thmkellerliverani}, $\chi$ is continuous on $J_0$.
Let $K$ be a compact subset of $J_0$. We set $\theta := \max_{K} \frac{\chi}{r}$. Since $\chi(\gamma) < r(\gamma)$ for every $\gamma\in K$ and since $r(\cdot)$ and $\chi(\cdot)$ are continuous, we conclude that $\theta\in(0,1)$. Next we consider any $\eta>0$ such that $\theta + 2\eta <1$. 

Let us construct the map 
$\gamma\mapsto \Pi_\gamma$, from $K$ to $\cL(\cB_0)$, and prove its properties.
Let $\gamma_0\in K$. Since $r$ is continuous on $K$, 
there exists $\varepsilon>0$ such that, for every $\gamma\in K$ such that $|\gamma-\gamma_0|\le\varepsilon$,
we have $|r(\gamma)-r(\gamma_0)|<\eta r(\gamma_0)$.
Let us write $K(\gamma_0)$ for the set of  $\gamma\in K$
such that $|\gamma-\gamma_0|\le\varepsilon$.
Observe that, for any $\gamma\in K(\gamma_0)$, 
$$\chi(\gamma)\le\theta r(\gamma) < \theta(1+\eta)r(\gamma_0)
 < (\theta +\eta)r(\gamma_0) <(1 -\eta)r(\gamma_0)  $$
and so the eigenprojector $\Pi_\gamma$ on $\ker (P_\gamma-
r(\gamma)I)$ can be defined by 
\begin{equation}\label{formulaCauchy1}
\Pi_\gamma = \frac{1}{2i\pi}\oint_{\Gamma_1(\gamma_0)} R_z(\gamma)\, dz,
\end{equation}
where $\Gamma_1(\gamma_0)$ is the oriented circle centered on $r(\gamma_0)$
with radius $\eta\, r(\gamma_0)$. Due to Theorem 
\ref{thmkellerliverani}, $\gamma\mapsto\Pi_\gamma$ is well
defined from $K(\gamma_0)$ to $\cL(\cB_0)$ and is continuous
from $K(\gamma_0)$ to $\cL(\cB_0,\cB_1)$. 

Now, for every $\gamma\in K$, we define the oriented
circle $ \Gamma_0(\gamma)  := \big\{z\in\C : |z| = (\theta + \eta)\, r(\gamma)\big\}$.
By definition of $\theta$, for every $\gamma\in K$, we have $\chi(\gamma)\leq \theta\,  r(\gamma)$ and so 
$\chi(\gamma) < (\theta + \eta)\,  r(\gamma)< r(\gamma)$.
Hence, by definition of $\chi(\gamma)$,  $R_z(\gamma)$ is well-defined in $\cL(\cB_0)$ for every $\gamma\in K$ and $z\in\Gamma_0(\gamma)$. From spectral theory, it comes that
\begin{equation}\label{formulaCauchy}
N_\gamma^n:=P_\gamma^n - r(\gamma)^n\Pi_\gamma = \frac{1}{2i\pi}\oint_{\Gamma_0(\gamma)} z^n\, R_z(\gamma)\, dz
\end{equation}
and so
\begin{equation}\label{ineg-norme}
\|P_\gamma^n - r(\gamma)^n\Pi_\gamma\|_{\cB_0} \leq B_\gamma\, \big((\theta + \eta)\, r(\gamma)\big)^{n+1}\quad \text{with}\quad B_\gamma := \sup_{|z| = (\theta + \eta)\, r(\gamma)} \|R_z(\gamma)\|_{\cB_0}.
\end{equation}
It remains to prove that
\begin{equation} \label{sup-M}
M_K := \sup_{\gamma\in K} B_\gamma < \infty.
\end{equation}
Let $\gamma_0\in K$. Since $\gamma\mapsto r(\gamma)$ is continuous at $\gamma_0$, there exists $\alpha\equiv\alpha(\gamma_0)>0$ such that, for every $ \gamma\in K$ such that $|\gamma-\gamma_0| < \alpha$, we have
$$\frac{\theta + \frac{\eta}{2}}{\theta + \eta}\, r(\gamma_0) < r(\gamma) < 
\frac{\theta + \frac{3\eta}{2}}{\theta + \eta}\, r(\gamma_0).$$
Set  $\delta := \frac{\eta}{2} \, r(\gamma_0)$. 
If $|\gamma-\gamma_0| < \alpha$ and if $|z|=(\theta + \eta)\, r(\gamma)$, we obtain  since $\delta_0\leq\chi(\gamma_0) \leq \theta\,  r(\gamma_0)$ and $\theta + 2\eta <1$: 
$$\delta_0+\delta \leq \chi(\gamma_0) +  \delta \leq \big(\theta + \frac{\eta}{2}\big) \, r(\gamma_0) < |z| < \big(\theta + \frac{3\eta}{2}\big) \, r(\gamma_0) < 
r(\gamma_0)- \delta.$$
From the previous inequalities, let us just keep in mind that 
$\chi(\gamma_0) +  \delta < |z| <  r(\gamma_0)- \delta$. Then, by definition of $\chi(\gamma_0)$, we conclude that every complex number $z$
such that $|z| = (\theta + \eta)\, r(\gamma)$ satisfies
$$|z| > \delta_0+ \delta\ \ \text{and}\ \ d\big(z,\sigma(Q(\gamma_0))\big) > \delta.$$
Hence, up to a change of $\alpha$, due to Theorem \ref{thmkellerliverani}, we obtain that
$$\sup_{\gamma>0\,:\,|\gamma-\gamma_0| < \alpha} B_\gamma = \sup\left\{ \|R_z(\gamma)\|_{\cB_0}\ :\ |\gamma-\gamma_0| < \alpha,\ |z| = (\theta + \eta)\, r(\gamma)\right\} < \infty.$$
By a standard compacity argument, we have proved
(\ref{sup-M}). Consequently, with $\theta_K := \theta + \eta$, we deduce from \eqref{ineg-norme} that 
$$\|P_\gamma^n - r(\gamma)^n\Pi_\gamma\|_{\cB_0} \leq M_K\, \big(\theta_K\, r(\gamma)\big)^n$$
from which we derive \eqref{sup-vit}. 
\end{proof}
\begin{proof}[Proof of Theorem \ref{generalspectraltheorem2} under Hypothesis \ref{hypKL}]
First we prove the following lemma. 
\begin{alem} \label{rad-egaux}
For all $\gamma\in J_0$ and for $i=1,2$, the spectral radius of $P_{\gamma| \cB_i}$ is equal to $r(\gamma):=r\big(P_{\gamma| \cB_0}\big)$.
\end{alem}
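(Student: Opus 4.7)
The strategy will be to prove by induction on $i$ that $r(P_{\gamma|\cB_i})$ is independent of $i\in\{0,1,2\}$, combining the existence of a positive eigenvector produced by Proposition~\ref{firstorder} on $\cB_0$ with the existence of a strictly positive dual eigenvector produced by Proposition~\ref{firstorder} on $\cB_1$ (and later on $\cB_2$). Throughout I shall rely on the fact that the embeddings $\cB_0\hookrightarrow\cB_1\hookrightarrow\cB_2$ preserve positivity, which is immediate in the concrete examples of the paper (the spaces $\cC_{V^a}$ and $\L^a(\pi)$), where the lattice structure is the natural pointwise one.

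Fix $\gamma\in J_0$ and write $r_i:=r(P_{\gamma|\cB_i})$, so that $r_0=r(\gamma)$. First I will apply Proposition~\ref{firstorder} to $P_\gamma$ on $\cB_0$: this is legitimate because Hypothesis~\ref{hypKL} for $(\cB_0,\cB_1)$ together with $\gamma\in J_0$ gives quasicompactness with $r_{ess}(P_{\gamma|\cB_0})\leq\delta_0<r_0$, and Hypothesis~\ref{hypcompl} on $\cB_0$ supplies the positivity assumptions. We thus obtain a positive $\phi_\gamma\in\cB_0$ with $P_\gamma\phi_\gamma=r_0\,\phi_\gamma$. Since $\phi_\gamma$ lies in $\cB_1$ as well, $r_0$ is an eigenvalue of $P_\gamma$ on $\cB_1$, so $r_0\leq r_1$; and similarly $r_1\leq r_2$.

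For the reverse inequality $r_1\leq r_0$, I observe that $r_1\geq r_0>\delta_0$, while Hypothesis~\ref{hypKL} for $(\cB_1,\cB_2)$ gives $r_{ess}(P_{\gamma|\cB_1})\leq\delta_0<r_1$. Hence $r_1$ is a finite pole of $P_\gamma$ on $\cB_1$, and applying Proposition~\ref{firstorder} on $\cB_1$ (with Hypothesis~\ref{hypcompl} on $\cB_1$) produces a nonnull nonnegative $\psi_1\in\cB_1^*$ satisfying $P_\gamma^*\psi_1=r_1\,\psi_1$; the proof of that proposition furthermore ensures that $\psi_1(f)>0$ for every nonnull nonnegative $f\in\cB_1$. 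Since $\phi_\gamma$ is strictly positive in $\cB_0$, it is a nonnull nonnegative element of $\cB_1$, so $\psi_1(\phi_\gamma)>0$. The identity
$$r_1\,\psi_1(\phi_\gamma)\;=\;\psi_1\bigl(P_\gamma\phi_\gamma\bigr)\;=\;r_0\,\psi_1(\phi_\gamma)$$
then forces $r_1=r_0$. Repeating the argument with the pair $(\cB_1,\cB_2)$ in place of $(\cB_0,\cB_1)$ — using the same $\phi_\gamma$, now viewed in $\cB_1$, together with a dual eigenvector $\psi_2\in\cB_2^*$ for the eigenvalue $r_2$ given by Proposition~\ref{firstorder} on $\cB_2$ — yields $r_2=r_1$, and the lemma follows.

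The main subtle point is the compatibility of positivity across the scale of spaces: a strictly positive element of $\cB_i$ must remain a nonnull nonnegative element of $\cB_{i+1}$ in the sense of the latter's lattice structure, so that the pairing $\psi_{i+1}(\phi)$ is indeed strictly positive. In the concrete examples treated in the paper this is evident, and in the abstract setup it is implicit in the common function-theoretic framework underlying Hypothesis~\ref{hypcompl}.
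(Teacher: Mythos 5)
Your argument is correct, and it takes a genuinely different route from the paper's. The paper disposes of the lemma in three lines: it applies Theorem~\ref{generalspectraltheorem1} on each pair $(\cB_i,\cB_{i+1})$, which (via \eqref{sup-vit} and the fact that $\pi(\Pi_\gamma\mathbf 1_\X)>0$) yields $\pi(P_\gamma^n\mathbf 1_\X)\sim c_i\, r_i(\gamma)^n$ for some $c_i>0$; since the left-hand side does not depend on $i$, the rates $r_i(\gamma)$ must coincide. You instead go back directly to Proposition~\ref{firstorder}: you extract a positive eigenvector $\phi_\gamma\in\cB_0$ for $r_0$, observe that it survives the embedding into $\cB_i$, then extract a strictly positive dual eigenvector $\psi_i\in\cB_i^*$ for $r_i$ and equate $r_i\,\psi_i(\phi_\gamma)=\psi_i(P_\gamma\phi_\gamma)=r_0\,\psi_i(\phi_\gamma)$. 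The two approaches invoke the same underlying machinery (the eigenprojection structure supplied by Proposition~\ref{firstorder}), but yours is more elementary in that it never needs the geometric rate of convergence, only the existence of matched positive eigenvectors and eigenfunctionals; the paper's is more economical given that Theorem~\ref{generalspectraltheorem1} has already been proved. Both arguments rely, as you rightly flag, on the embeddings $\cB_0\hookrightarrow\cB_1\hookrightarrow\cB_2$ preserving positivity, which is tacit in the paper's Banach-lattice-of-functions framework. One small glitch: when you assert ``and similarly $r_1\leq r_2$'' you have at that moment only produced an eigenvector of $P_\gamma$ in $\cB_1$ with eigenvalue $r_0$, not $r_1$, so the inclusion argument gives $r_0\le r_2$ rather than $r_1\le r_2$; this is harmless because the subsequent duality step establishes $r_1=r_0$ anyway, after which $r_1\le r_2$ is immediate, and the final pairing against $\psi_2$ gives $r_2=r_0=r_1$ directly. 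You could simply delete that clause.
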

\begin{proof}
For $i=0,1,2$ set $r_i(\gamma):=r((P_\gamma)_{|\cB_i})$.
Due to Theorem \ref{generalspectraltheorem1} applied to $(P_\gamma,J,\cB_i,\cB_{i+1})$, there exists $c_i>0$
such that
$  \pi(P_\gamma^n\mathbf{1}_\X)\sim  c_i \, r_i(\gamma)^n $ as $n$ goes to infinity.
This proves the equality of the spectral radius. 
\end{proof}
We define $\chi_i$ as
$\chi$ in the proof of Theorem \ref{generalspectraltheorem1} for each $\cB_i$ ($i=0,1,2$). We define now $\chi:=\max(\chi_0,\chi_1,\chi_2)$. 

Now let us prove the differentiability of $r$ and $\Pi$.
Let  $\gamma_0 \in J_0$. Let $\eta>0$ be such that
$r(\gamma_0)>\chi(\gamma_0)+2\eta$ and let $\varepsilon>0$ 
be such that for every $\gamma\in J_0$ satisfying $|\gamma-\gamma_0|<\varepsilon$, we have 
$r(\gamma)>r(\gamma_0)-\eta>\chi(\gamma_0)+\eta>\chi(\gamma)$.
We set 
$I_0 := J_0\cap (\gamma_0-\varepsilon,\gamma_0-\varepsilon)$ and
\begin{equation} \label{D0}
\cD_0:=\{z\in\C\, :\, \chi(\gamma_0)+\eta<|z|<r(\gamma_0)-\eta\} \cup \{z\in\C : |z-r(\gamma_0)| = \eta\}.
\end{equation}
Due to the hypotheses of Theorem \ref{generalspectraltheorem2} and to an easy adaptation of \cite[Lemma A.2]{HerPen10} (see Remark~\ref{KL-explication}), we obtain that, for every $z\in\cD_0$, the map $\gamma\mapsto R_z(\gamma)$ is $C^1$ from $I_0$
to $\mathcal L(\cB_0,\cB_3)$ with $R'_z(\gamma)=R_{z}(\gamma)P'_\gamma R_{z}(\gamma)$
and
\begin{equation} \label{deri-unif}
\lim_{h\rightarrow 0}
    \sup_{z\in\cD_0} \frac{\|R_z(\gamma_0+h)-R_z(\gamma_0)-hR'_z(\gamma_0)\|_{{\cB}_0,{\cB}_3}}{|h|}=0.
\end{equation}
Moreover, for every $\gamma\in I_0$, we deduce from spectral theory that 
$$ \Pi_\gamma=\frac 1{2i\pi }\oint_{\Gamma_1} R_z(\gamma)\, dz\quad\mbox{and}\quad
     N_\gamma=\frac 1{2i\pi}
  \oint_{\Gamma_0}z R_z(\gamma)\, dz,$$
where $\Gamma_1$ is the oriented circle centered at $r(\gamma_0)$ with radius $\eta$ and $\Gamma_0$ is the oriented circle centered at $0$ with some radius 
$\vartheta_0$ satisfying $\chi(\gamma_0)+\eta<\vartheta_0<r(\gamma_0)-\eta$. Since $1_\X\in\cB_0$ by hypothesis this implies the continuous differentiability of $\gamma\mapsto N_\gamma\mathbf 1_\X$ and of $\gamma\mapsto \Pi_\gamma\mathbf 1_\X$ from $J_0$ to $\cB_3$. Since 
$r(\gamma)=\frac{(P_\gamma-N_\gamma)(\mathbf 1_\X)}{\Pi_\gamma(\mathbf 1_\X)}$ and $\gamma\mapsto P_\gamma\mathbf 1_\X$ is $C^1$ from $I_0$ to $\cB_3$ by hypothesis, we obtain the continuous differentiability of $r$ on  $I_0$. Let us define $\phi_\gamma=\Pi_\gamma\mathbf 1_\X$ and $\pi_\gamma=\Pi^*_\gamma\pi$. 
To prove that the derivative of $r$ is negative we now apply Proposition \ref{deriveenegative} with $\cB_1\hookrightarrow\cB_2$. 
%$\pi_{\gamma}(\xi\phi_{\gamma})\neq0$ 
Indeed $\pi_\gamma\in \cB^*_2$ since $\pi\in\cB_2^*$ and $\Pi^*_\gamma$ is well defined in $\cL(\cB_2^*)$. Moreover $\phi_\gamma\in\cB_1$ since $1_\X\in\cB_1$ and $\Pi_\gamma$ is well defined in $\cL(\cB_1)$, and $\gamma\mapsto \phi_\gamma$ is continuous from $J$ to $\cB_1$ by Theorem~\ref{generalspectraltheorem1}. Finally $\gamma\mapsto \phi_\gamma$ is differentiable from $J$ to $\cB_2$ (see the end of Remark~\ref{KL-explication}). 
\end{proof}
\begin{arem}[Proof of the differentiability of $\gamma\mapsto\Pi_ \gamma$]\label{KL-explication}
We adapt the arguments of \cite[Lemma A.2]{HerPen10}, writing 
$$R_z(\gamma) = R_z(\gamma_0) \ +\  R_z(\gamma_0)\, [P_\gamma-P_{\gamma_0}]\, R_z(\gamma_0) \ + \  \vartheta_z(\gamma),$$
$$\text{with}\qquad \vartheta_z(\gamma) := R_z(\gamma_0)\, [P_\gamma - P_{\gamma_0}]\,R_z(\gamma_0)\, [P_{\gamma} - P_{\gamma_0}]\, R_z(\gamma).$$ 
Then 
\begin{equation} \label{ineg-deri-res}
\frac{\Vert \vartheta_z(\gamma)\Vert_{{\cB}_0,{\cB}_3}}{|\gamma-\gamma_0|}
\le 
\Vert R_z(\gamma_0) \Vert_{{\cB}_2} 
\left\Vert\frac{P_\gamma - P_{\gamma_0}}{\gamma-\gamma_0}
\right\Vert_{{\cB}_1,{\cB}_2}
\Vert R_z(\gamma_0)\Vert_{{\cB}_1}
\Vert P_\gamma - P_{\gamma_0}\Vert_{{\cB}_0,{\cB}_1}
\Vert R_z(\gamma)\Vert_{{\cB}_0}.
\end{equation} 
From the hypotheses of Theorem \ref{generalspectraltheorem2} and from the resolvent bounds derived from Theorem~\ref{thmkellerliverani}, the last term goes to 0, uniformly in $z\in\cD$, when $\gamma$ goes to $\gamma_0$.  Similarly we have: 
\begin{eqnarray*}
&\ & \left\Vert R_z(\gamma_0)(P_\gamma - P_{\gamma_0})R_z(\gamma_0) - (\gamma-\gamma_0)R_z(\gamma_0) P'_{\gamma_0} R_z(\gamma_0)
\right\Vert_{{\cB}_0,{\cB}_3}   \\
&\ & \qquad \qquad \qquad \qquad \qquad \le \ M
 \Vert P_\gamma - P_{\gamma_0} - (\gamma-\gamma_0)P'_{\gamma_0}\Vert_{{\cB}_1,{\cB}_2} = \text{o}(\gamma-\gamma_0)
\end{eqnarray*}
when again the finite positive constant $M$ is derived from the resolvent bounds of Theorem~\ref{thmkellerliverani}. This shows that $R_z'(\gamma_0)=R_z(\gamma_0)P'_{\gamma_0}R_z(\gamma_0)$ in $\cL({\cB}_0,{\cB}_3)$. To prove that $\gamma\mapsto R_z'(\gamma)$ is continuous from $J_0$ to $\cL({\cB}_0,{\cB}_3)$ in a uniform way with respect to $z\in\cD$, observe that $\gamma\mapsto R_z(\gamma)$ is $\cC^0$ from $J_0$ to $\cL(\cB_0,\cB_1)$ (use Theorem~\ref{thmkellerliverani}), that $\gamma\mapsto P'_{\gamma}$ is $\cC^0$ (uniformly in $z\in\cD$) from $J_0$ to $\cL(\cB_1,\cB_2)$ by hypothesis, and finally that $\gamma\mapsto R_z(\gamma)$ is $\cC^0$ (uniformly in $z\in\cD$) from $J_0$ to $\cL(\cB_2,\cB_3)$ (again use Theorem~\ref{thmkellerliverani}). 
Observe that (\ref{ineg-deri-res}) gives the differentiability at $\gamma_0$ of the map $\gamma\mapsto R_z(\gamma)$ considered from $J$ to $\cL(\cB_0,\cB_2)$. The additional space $\cB_3$ is only required to obtain the continuous differentiability.
\end{arem}
\begin{proof}[Proof of Theorem \ref{generalspectraltheorem1} under Hypothesis \ref{hypKL}*]Here the Keller-Liverani perturbation theorem must be applied to the dual family $(P_\gamma^*)_\gamma$. Actually the hypotheses of Theorem~\ref{generalspectraltheorem1} are:   
\begin{itemize}
\item $\cB_1^* \hookrightarrow \cB_0^*$,
\item For every $\gamma\in J$, $P_\gamma^*\in \mathcal L(\mathcal B_0^*)\cap\mathcal L(\mathcal B_1^*)$,
\item $\gamma\mapsto P_\gamma^*$ is a continuous map from $J$ in $\mathcal L( \mathcal B_1^*,\mathcal B_0^*)$,  
\item There exist $\delta_0,c_0,M_0>0$ such that, for all $\gamma\in J$,  $r_{ess}\big((P_{\gamma})^*_{|\cB_1^*}\big)\le\delta_0$
and 
$$\forall n\geq 1,\ \forall f^*\in\cB_1^*,\quad 
\|(P_\gamma^*)^n f^*\|_{\cB_1^*}\le c_0(\delta_0^n\| f^*\|_{\cB_1^*} + M^n\| f^*\|_{\cB_0^*})
.$$
\item Hypothesis \ref{hypcompl} holds on $(J,\cB_1)$. 
\end{itemize}
Under these assumptions it follows from Theorem~\ref{thmkellerliverani} applied to $(P_\gamma^*)_{\gamma\in J}$ with respect to $(\mathcal B_1^*,\mathcal B_0^*)$ that, for every $\varepsilon>0$ and every $\delta>\delta_0$, the map $t\mapsto(zI-P_\gamma^*)^{-1}$ is well defined from $J_0$ to $\mathcal L(\mathcal B_1^*)$, provided that $z\in\mathcal D(\delta,\varepsilon)$ with 
$$\mathcal D(\delta,\varepsilon):=\{z\in \mathbb C,\ 
       d(z,\sigma\big((P_{\gamma_0}^*)_{|\cB_2^*})\big)>\varepsilon,\ |z|>\delta\} = \{z\in \mathbb C,\ d(z,\sigma\big((P_{\gamma_0})_{|\cB_2})\big)>\varepsilon,\ |z|>\delta\}.$$ 
In addition, the map $t\mapsto(zI-P_\gamma^*)^{-1}$, considered from $J_0$ to $\mathcal L(\mathcal B_1^*,\mathcal B_0^*)$, is continuous at every $\gamma_0\in J_0$ in a uniform way with respect to $z\in\mathcal D(\delta,\varepsilon)$. By duality this implies that $t\mapsto(zI-P_\gamma)^{-1}$ is well defined from $J_0$ to $\mathcal L(\mathcal B_1)$. Moreover, when this map is considered from $J_0$ to 
$\mathcal L(\mathcal B_0,\mathcal B_1)$, it is continuous at $\gamma_0$ in a uniform way with respect to $z\in\mathcal D(\delta,\varepsilon)$. Finally, since Hypothesis \ref{hypcompl} is assumed on $(J,\cB_1)$, Proposition~\ref{firstorder} enables us to identify the spectral elements associated with $r(\gamma) := r\big((P_\gamma)_{|\cB_1}\big)$. Consequently one can prove as above that there exists a map $\gamma\mapsto \Pi_\gamma$ from $J_0$ to $\mathcal L(\cB_1)$, which is continuous from  $J_0$ to $\mathcal L(\cB_0,\cB_1)$, such that (\ref{sup-vit}) holds with $\cB:=\cB_1$. 
\end{proof}
\begin{proof}[Proof of Theorem \ref{generalspectraltheorem2} under Hypothesis~\ref{hypKL}*] When Theorem \ref{generalspectraltheorem2} is stated with Hypothesis~\ref{hypKL}*, then Theorem \ref{generalspectraltheorem1} applies on $(\cB_0,\cB_1)$, $(\cB_1,\cB_2)$ and $(\cB_2,\cB_3)$ (with Hypothesis~\ref{hypKL}* in each case). Thus, for every $\gamma\in J_0$,  the spectral radius $r_i(\gamma):=r((P_\gamma)_{|\cB_i})$ are equal for $i=1,2,3$ (See the proof of Lemma~\ref{rad-egaux}). Observe that, from our hypotheses, Proposition~\ref{firstorder} applies to $P_\gamma$ with respect to $(J,\cB_i)$ for $i=1,2,3$. Since $P_\gamma^*$ on $\cB_i^*$ inherits the spectral properties of $P_\gamma$ on $\cB_i$, we can prove as above that, for every $\gamma_0\in J_0$ and for every $\varepsilon>0$ and $\delta>\delta_0$, the map $t\mapsto(zI-P_\gamma^*)^{-1}$ is well defined from some subinterval $I_0$ of $J_0$ containing $\gamma_0$ into $\mathcal L(\mathcal B_3^*)$, provided that $z\in\cD_0$ where the set $\cD_0$ is defined in (\ref{D0}). In addition, by applying Remark~\ref{KL-explication} with the adjoint operators $(P_\gamma^*)_\gamma$ and the spaces $\cB_3^*\hookrightarrow\cB_2^*\hookrightarrow\cB_1^*
\hookrightarrow\cB_0^*$, we can prove that the map $t\mapsto(zI-P_\gamma^*)^{-1}$, considered from $J_0$ to $\mathcal L(\mathcal B_3^*,\mathcal B_0^*)$, is $\cC^1$ in a uniform way with respect to $z\in\cD_0$. By duality, this gives (\ref{deri-unif}). We conclude the differentiability of $\gamma\mapsto
\Pi_{\gamma}^*$ from $J_0$ to $\mathcal L(\mathcal B_3^*,\mathcal B_1^*)$ and so  the differentiability of $\gamma\mapsto
\Pi_{\gamma}$ from $J_0$ to $\mathcal L(\mathcal B_1,\mathcal B_3)$. 
To prove that the derivative of $r$ is negative
we apply Proposition \ref{deriveenegative} with the spaces $\cB_1$ and $\cB_3$. Note that $\pi_\gamma:=\Pi^*_\gamma\pi\in \cB^*_3$ since $\pi\in\cB_3^*$ and $\Pi^*_\gamma$ is well defined in $\cL(\cB_3^*)$. The function $\gamma\mapsto P_\gamma$ is differentiable from $J_0$ to $\cL(\cB_1,\cB_2)$, thus from $J_0$ to 
$\cL(\cB_1,\cB_3)$. We have $\phi_\gamma:=\Pi_\gamma\mathbf 1_\X\in\cB_1$ since $1_\X\in\cB_1$ and $\Pi_\gamma$ is well defined in $\cL(\cB_1)$. Moreover $\gamma\mapsto \phi_\gamma$ is continuous from $J$ to $\cB_1$ since $\Pi_\gamma$ is well defined in $\cL(\cB_1)$, continuous from $J_0$ to $\cL(\cB_0,\cB_1)$, and $1_\X\in\cB_0$. Finally
$\gamma\mapsto \phi_\gamma$ is differentiable from $J$ to $\cB_3$ since $\Pi_\gamma$ is well defined in $\cL(\cB_3)$ and differentiable from $J_0$ to $\cL(\cB_1,\cB_3)$ and $1_\X\in\cB_1$. 
\end{proof}
\subsection{A counter-example}\label{counterexample}
Assume that $(\X,d)$ is a metric space equipped with its Borel $\sigma$-algebra. 
Let $\mathcal \cL^\infty$ denote the set of bounded functions $f:\mathbb X\rightarrow\mathbb C$, endowed with the
supremum norm.
\begin{apro}
Assume that $P$ is a Markov kernel satisfying the following condition : there exists $S\in(0,+\infty)$ such that, for every $x\in\X$, the support of $P(x,dy)$ is contained in the ball $B(x,S)$ centered at $x$ with radius $S$. Assume that $\xi(y)\r0$ when $d(y,x_0)\r +\infty$, where $x_0$ is some fixed point in $\X$.
Then, for every  $\gamma\in[0,+\infty)$, the kernel $P_\gamma$ continuously acts on $\cL^\infty$ and its spectral radius $r(\gamma)=r((P_\gamma)_{|\cL^\infty})$ satisfies the following 
$$\forall \gamma\in[0,+\infty),\quad r(\gamma) = 1.$$
\end{apro}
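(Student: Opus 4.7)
My plan would be to establish the two inequalities $r(\gamma)\le 1$ and $r(\gamma)\ge 1$ separately.

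First I would verify that $P_\gamma$ acts on $\cL^\infty$ with norm at most $1$. Since $e^{-\gamma\xi}\le 1$ and $P$ is Markov, for any $f\in\cL^\infty$ and any $x\in\X$ we have $|(P_\gamma f)(x)|\le \int |f(y)|\, e^{-\gamma\xi(y)}P(x,dy)\le\|f\|_\infty$. Hence $\|P_\gamma\|_{\cL^\infty}\le 1$, and consequently $r(\gamma)\le 1$.

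The heart of the argument is the reverse inequality $r(\gamma)\ge 1$, and this is where the support condition on $P$ plays its role. The key observation is the probabilistic identity
\[
(P_\gamma^n \mathbf 1_\X)(x)=\EE_x\Bigl[\exp\Bigl(-\gamma\sum_{k=1}^n\xi(X_k)\Bigr)\Bigr].
\]
Under the assumption $\supp P(x,\cdot)\subset B(x,S)$, an immediate induction shows that, $\PP_x$-a.s., $d(X_k,x)\le kS$ for every $k\ge 1$. Therefore, if $x$ satisfies $d(x,x_0)\ge R+nS$ for some $R>0$, then $d(X_k,x_0)\ge R$ for every $k=1,\dots,n$. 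Given $\eps>0$, the hypothesis $\xi(y)\to 0$ as $d(y,x_0)\to\infty$ provides some $R_\eps$ such that $\xi\le\eps$ outside the ball $B(x_0,R_\eps)$. Picking any $x$ with $d(x,x_0)\ge R_\eps+nS$ then yields
\[
(P_\gamma^n \mathbf 1_\X)(x)\ge e^{-\gamma n\eps}.
\]

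From this pointwise lower bound I would conclude that $\|P_\gamma^n\|_{\cL^\infty}\ge \|P_\gamma^n\mathbf 1_\X\|_\infty\ge e^{-\gamma n\eps}$ for every $n\ge 1$, whence
\[
r(\gamma)=\lim_{n\to\infty}\|P_\gamma^n\|_{\cL^\infty}^{1/n}\ge e^{-\gamma\eps}.
\]
Letting $\eps\downarrow 0$ gives $r(\gamma)\ge 1$, and combining the two inequalities finishes the proof. The only subtle point is the verification of the support claim $d(X_k,x)\le kS$ for all trajectories, which requires a careful induction on the conditional distributions $P(X_{k-1},\cdot)$; everything else is direct bookkeeping.
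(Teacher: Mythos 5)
Your proof is correct and follows essentially the same approach as the paper: both lower-bound $\|P_\gamma^n\|_{\cL^\infty}$ by evaluating $P_\gamma^n f$ at a point $x$ far enough from $x_0$ so that, by the support condition, the $n$-step trajectory stays in the region where $\xi$ is small. The only cosmetic differences are that you use $f=\mathbf 1_\X$ and a probabilistic expectation while the paper uses $f=\mathbf 1_{[\xi\le\beta]}$ and iterated integrals, and that you let $n\to\infty$ before $\eps\to 0$ rather than showing $\|P_\gamma^n\|_{\cL^\infty}=1$ for each fixed $n$; both orders give $r(\gamma)=1$.
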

\begin{proof}
We clearly have $r(\gamma) \leq 1$ since $P_\gamma\leq P$  and $P$ is Markov. 
For any $\beta>0$, we obtain with $f = \mathbf{1}_{[\xi \leq \beta]}$ 
$$\forall x\in\X,\quad (P_\gamma f)(x) = \int_{[\xi \leq \beta]} e^{-\gamma\xi(y)}\, P(x,dy) \geq e^{-\gamma\beta}\, P\big(x,[\xi \leq \beta]\big).$$
The set $[\xi \leq \beta]$ contains $\X\setminus B(x_0,R)$ for some $R>0$ since $\xi(y)\r0$ when $d(y,x_0)\r +\infty$. Thus, for $d(x,x_0)$ sufficiently large ($d(x,x_0)>R+S$), we 
have $P\big(x,[\xi \leq \beta]\big) = 1$, so that 
$$\|P_\gamma\|_{\cL^\infty} \geq \|P_\gamma f\|_{\cL^\infty} \geq e^{-\gamma\beta}.$$
This gives $\|P_\gamma\|_{\cL^\infty} = 1$ when $\beta \r 0$. Similarly we obtain with $f = \mathbf{1}_{[\xi \leq \beta]}$ 
\begin{eqnarray*}
\forall x\in\X\setminus B(x_0,R+2S),\quad (P_\gamma^2 f)(x) &=& \int e^{-\gamma(\xi(y)+\xi(z))}\, \mathbf{1}_{[\xi \leq \beta]}(z)\, P(y,dz)\, P(x,dy) \\ 
&\geq& e^{-\gamma\beta}\, \int e^{-\gamma\xi(y)}\, P\big(y,[\xi \leq \beta]\big) P(x,dy)\\ 
&\geq& e^{-\gamma\beta}\, \int_{\X\setminus B(x_0,R+S)} e^{-\gamma\xi(y)}\, P\big(y,[\xi \leq \beta]\big) P(x,dy)\\ 
&\geq& e^{-2\gamma\beta}
\end{eqnarray*}
and so 
$$\forall \beta>0,\quad \|P_\gamma^2\|_{\cL^\infty} 
\geq \|P_\gamma^2 f\|_{\cL^\infty} \geq e^{-2\gamma\beta}.$$
Again this provides $\|P_\gamma^2\|_{\cL^\infty} = 1$ since $\beta$ can be taken arbitrarily large. Similarly we can prove that $\|P_\gamma^n\|_{\cL^\infty} = 1$ for every $n\geq 1$, thus $ r(\gamma) = 1$. 
\end{proof}

\section*{Acknowledgements}
We wish to thank Sana Louhichi and Bernard Ycart for interesting
discussions. We are particularly grateful to Sana Louhichi 
for having asked us this question.

%\bibliographystyle{alpha}
%\bibliography{Biblio-avec-Loic}

\end{document}